\newtheorem{thm}{Theorem}[section]
\newtheorem{cor}[thm]{Corollary}
\newtheorem{lem}[thm]{Lemma}
\newtheorem{prop}[thm]{Proposition}
\newtheorem{defi}[thm]{Definition}
\newcommand\be{\begin{equation}}
\newcommand\ee{\end{equation}}
\newcommand\bea{\begin{eqnarray}}
\newcommand\eea{\end{eqnarray}}
\newcommand\bi{\begin{itemize}}
\newcommand\ei{\end{itemize}}
\newcommand\ben{\begin{enumerate}}
\newcommand\een{\end{enumerate}}
\newcommand\bc{\begin{center}}
\newcommand\ec{\end{center}}
\newcommand\ba{\begin{array}}
\newcommand\ea{\end{array}}
\begin{document}

\title{Diffractive Theorems for the Wave Equation with Inverse Square Potential}
\author{Randy Z. Qian}
\date{\today}

\maketitle

\begin{abstract}
We first establish the presence of a diffractive front in the fundamental solution of the wave operator with a diract delta intial condition in two dimensional euclidean space caused by the potentials perturbation on the spherical laplacian. This motivates a result which restricts the propagation of singularities for the wave operator with a more general potential to precisely these diffractive fronts higher dimensional euclidean spaces. This is proven using microlocal energy estimates.
\end{abstract}

\section{Introduction}

This paper deals with the diffractive phenomenon exhibited by the wave equation when singularities in its distributional solution encounter an inverse square potential. Potentials of this order are of broad interest since they form the border line case for the existence of global in time estimates for the wave and Schr\"odinger equations (see \cite{gorgiev_vis} \cite{rodnianski_schlag}); recent work has been done by Burq, Planchon, Stalker, and Tahvildar-Zadeh to establish Strichartz estimates for these equations with such a potential in \cite{Burq_et}. Inverse square potentials also appear in physical equations, including a recast version of the Dirac equation with Coulomb potential (see \cite{case}) and the linearized perturbations of the Schwarzschild solution among others for the Einstein equations (see \cite{regge_wheeler} \cite{zerilli}). Furthermore, the interaction of singularities in solutions of PDE with singular spaces have long been known to cause diffraction; we will survey the work done in this direction before stating some diffractive results for inverse square potentials. The mathematical interest of this potential in all of these cases largely comes from the $-2$ radial order of homogeneity common to both the inverse square potential and the Laplacian. 

In Euclidean spacetime, $\mathbb{R}^{n-1} \times \mathbb{R}=\{(x,t)\}$, the simplest version of our scenario is the wave equation with an inverse square potential $\frac{a}{r^2}$ (where $r$ is radial distance in $x$ from $x=0$) which is described by:
\be\label{euclidean_diffraction}
\left\{ \begin{array}{rcl}
\square u(x,t) + \frac{a}{r^2} u(x,t) &=& 0  \qquad\qquad\qquad (a>0,\; \text{a constant})\\
u_t(x,0)&=&\delta(x-x_0)\\
u(x,0)&=&0
\end{array}\right.
\ee

The ``box'' symbol: $\square$ is the d'Alembert operator defined as:

\be
\square := \frac {\partial^2}{\partial t^2}-\sum_{i=1}^{n-1}{\frac{\partial^2}{\partial x_i^2}}
\ee

where the second term is the familiar Laplacian $\Delta:=-\sum_{i=1}^{n-1}{\frac{\partial^2}{\partial x_i^2}}$ in the $n-1$ spatial variables. $\delta(x-x_0)$ is a Dirac delta function giving an initial condition with its singular peak centered at $x_0$. Our interest is in finding the set of singularities of the distributional (weak) solution $u$ in space-time. This set is denoted singsupp$(u)$ and defined as the places where $u$ is not $C^\infty$.

To illustrate, consider figure 1 below which qualitatively shows the behavior of the solution $u$ in the spacetime $\mathbb{R}^2\times \mathbb{R}$. In the forward time direction, the behavior of the initial data $\delta(x-x_0)$ is like an ``explosion'' at $t=0$ which sends out a primary spherical front of singularities at unit speed that sweeps out a cone (with profile slope 1) in space-time, accounting for the outer cone which is between regions I and II in the figure; this behaviour is the same as what would occur for the free wave equation, $\square u = 0$, under the same initial conditions. The inner cone between regions II and III is a diffractive front in the solution brought about by the initial front striking the potential at the origin; this is a qualitatively different phenomenon from the singularities in the free case. 
\begin{figure}[htp]
\begin{center}
\fbox{\parbox[c][2.1in][l]{2.4in} 
{\leftline{\includegraphics[scale=0.5]{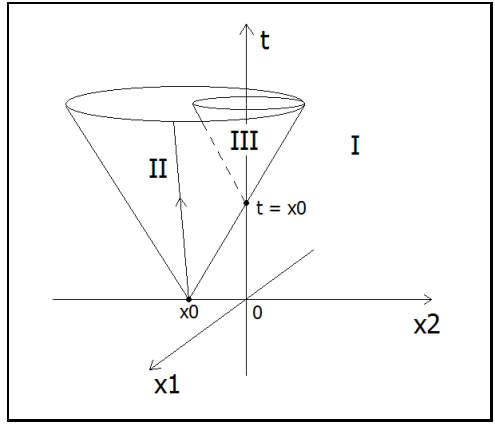}}}
}
\end{center}
\caption{main front (with embedded bicharacteristic) and diffractive front.}
\end{figure}

A similar diffractive front is the subject of several other results. The first rigorous treatment of diffraction for the wave equation was made by Sommerfeld in \cite{sommerfeld} for the case of a singular front encountering a slit in a plane, causing diffraction into the shadow region of the slit. Friedlander extended this to cone obstacles in \cite{friedlander} followed by Cheeger and Taylor who explicitly dealt with the case of general product cones in \cite{cheeger_taylor}. The potential $\frac{a}{r^2}$ can be also thought of as a singular feature of our space at the origin, and by employing the separation of variable methods used by the authors above, we can deduce the following theorem about the explicit solution:

\begin{thm} {\emph{Existence of diffraction:}}\label{theorem1}
There is an explicit diffractive front corresponding to the inner cone in figure 1 in $u$, the explicit solution of (\ref{euclidean_diffraction}) in $\mathbb{R}^2$.
\end{thm}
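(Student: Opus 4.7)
The plan is to write $u$ explicitly via separation of variables in polar coordinates and then read off its singularities from the resulting Bessel series, following the template of Cheeger--Taylor's treatment of product cones.

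First, I would work in polar coordinates $(r,\theta)$ centered at the origin and place the source at $x_0 = (r_0, \theta_0)$. The spatial operator becomes
\[
L \;=\; -\partial_r^2 - r^{-1}\partial_r + r^{-2}\bigl(-\partial_\theta^2 + a\bigr).
\]
The angular operator $-\partial_\theta^2 + a$ on $S^1$ has orthonormal eigenbasis $e^{in\theta}/\sqrt{2\pi}$ with eigenvalues $n^2 + a$, so setting $\nu_n := \sqrt{n^2 + a}$, the Hankel transform of order $\nu_n$ diagonalizes each angular mode. Expanding the delta source in the angular Fourier basis and solving mode-by-mode produces the explicit formula
\[
u(r,\theta,t) \;=\; \frac{1}{2\pi}\sum_{n\in\Z} e^{in(\theta-\theta_0)} \int_0^\infty J_{\nu_n}(\lambda r)\, J_{\nu_n}(\lambda r_0)\, \sin(\lambda t)\, d\lambda.
\]

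Next, I would analyze the singular support of this series. The inner integral is a Weber--Schafheitlin-type integral whose closed form depends on the regime of $t$: for $t < |r - r_0|$ it vanishes; for $|r - r_0| < t < r + r_0$ it is expressed by a Legendre function of the first kind and, after summation in $n$ via Poisson summation, reconstitutes the main conormal front along $t = |x - x_0|$ (exactly as in the free case $a=0$). In the diffractive region $t > r + r_0$, the integral carries an overall factor of $\sin(\nu_n \pi)$. This is where the potential becomes decisive: in the free case one has $\nu_n = |n| \in \Z$ and $\sin(\nu_n \pi) \equiv 0$, so the region $t > r + r_0$ is quiet; with $a > 0$ the orders $\nu_n = \sqrt{n^2 + a}$ are generically irrational and $\sin(\nu_n \pi) \neq 0$, so a nontrivial contribution survives, producing a conormal singularity across $t = r + r_0$, which is the diffractive inner cone of figure 1.

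The main obstacle will be confirming that the sum over $n$ in the diffractive region genuinely exhibits a singular jump at $t = r + r_0$ rather than resumming to a smooth tail. Uniform Debye/Olver asymptotics for $J_\nu$ as $\nu \to \infty$ yield the decay in $n$ needed for absolute convergence once $\lambda$ is controlled, and a Watson-type transformation converts the remaining $n$-sum into a contour integral whose leading contribution as $t \to (r+r_0)^+$ identifies the jump explicitly, with a prefactor determined by $\sqrt{a}$. Matching this machinery to the present non-integer setting is the essence of the argument.
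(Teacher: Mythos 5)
Your proposal takes essentially the same route as the paper: Cheeger--Taylor separation of variables in polar coordinates, the Hankel transform on each angular mode with $\nu_n=\sqrt{n^2+a}$, closed-form evaluation of the mode-wise Bessel integral (the paper uses the Lipschitz--Hankel integral and a contour analysis of the Legendre function $Q_{\nu_n-\frac{1}{2}}$), and the nonvanishing of $\sin(\pi\nu_n)$ for non-integer $\nu_n$ as the mechanism producing the singularity across the inner cone $t=r+r_0$. One small caveat and one difference in the endgame: the $\sin(\pi\nu_n)$ factor multiplies only the extra term measuring the discontinuity across $t=r+r_0$, not the whole region-III integral (the free two-dimensional solution is nonzero, merely smooth, for $t>r+r_0$), and where you propose a Watson-type resummation with uniform Bessel asymptotics to show the jump survives the sum in $n$, the paper instead computes the per-mode jump limit (equal to $-\tfrac{1}{2}(r_1r_2)^{-1/2}\sin(\pi\nu_n)$) and argues that an angular distribution with Fourier coefficients proportional to $\sin(\pi\sqrt{n^2+a})$ can neither vanish nor reduce to a sum of delta-type point singularities.
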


In more general geometric contexts, we can't use separation of variables to achieve a similar explicit solution; energy methods are preferred. Work by Lebeau gives a diffractive result in the analytic setting for a broad class of manifolds in \cite{lebeau}. Melrose and Wunsch have established results for conic manifolds in \cite{melrose_wunsch} in the smooth setting, and along with Vasy who studied manifolds with corners in the smooth setting \cite{vasy}, they have recently published results in the case of edge manifolds \cite{melrose_wunsch_vasy}. Using (microlocal) energy techniques employed by these authors, we are led to a result about the propagation of smoothness around points of the form $(r=0, t)$ where the potential is concentrated:

\begin{figure}[htp]
\begin{center}
\fbox{\parbox[c][2.2in][l]{1.45in} 
{\leftline{\includegraphics[scale=0.7]{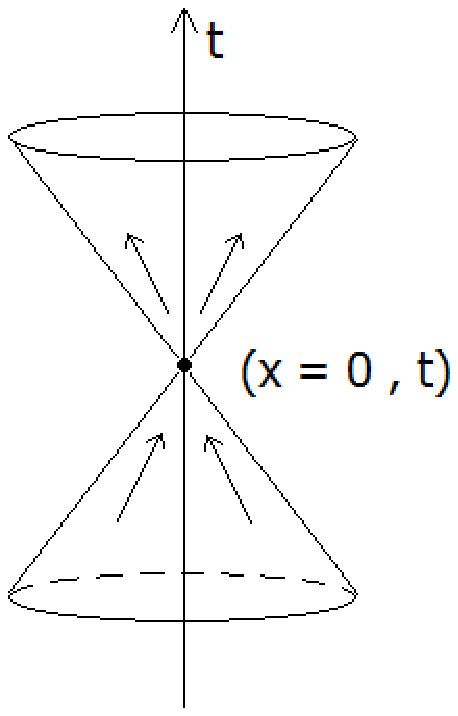}}}
}
\end{center}
\caption{incoming smoothness implies outgoing smoothness.}
\label{smoothin_smoothout}
\end{figure}

\begin{thm} {\emph{Propagation of smoothness:}} \label{theorem2}
Let $u$ be a solution of

\be
\left\{ \begin{array}{rcl}
\square u(x,t) + \frac{f(r,\theta)}{r^2} u(x,t) &=& 0  \qquad\qquad\, f \in C^\infty, \|f\|_\infty < \infty, f\ge -\left(\frac{n-2}{2}\right)\\
\end{array}\right.
\ee

If there are no singularities of $u$ going into the point $(0,t)$ originating from this point's backward cone of influence, then there are no singularities coming out of its forward diffractive cone as seen in figure \ref{smoothin_smoothout}.
\end{thm}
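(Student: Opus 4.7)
The plan is to mimic the positive-commutator / microlocal energy strategy developed by Melrose--Wunsch in the conic setting and extended by Melrose--Vasy--Wunsch for edge manifolds, adapted to the inverse-square potential. The first observation is that the hypothesis $f \ge -\left(\tfrac{n-2}{2}\right)^2$ (which I read as the Hardy constant; the statement as written appears to be a typo) together with boundedness of $f$ means that $H := -\Delta + f(r,\theta)/r^2$ is self-adjoint and non-negative on a natural Friedrichs domain via the Hardy inequality
\be
\int \frac{|u|^2}{r^2}\, dx \;\le\; \left(\tfrac{2}{n-2}\right)^2 \int |\nabla u|^2\, dx.
\ee
This gives a well-posed forward problem in the corresponding energy space and, more importantly, lets us absorb terms of the form $\langle (f/r^2)u,Au\rangle$ into kinetic terms whenever commutator identities produce them.

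First I would set up the microlocal framework. Because the potential is singular only at $\{r=0\}$, I would blow up this locus in spacetime and work in the b-pseudodifferential calculus on the resulting manifold with boundary, where the principal symbol of $r^2\square + f$ is smooth and the characteristic set meets the boundary only at the tangential (``radial'') locus $\xi_r=0$. Singularities of $u$ then correspond to failure of membership in appropriate b-Sobolev spaces $H^{s,\ell}_b$ localized on the b-cosphere bundle over a neighborhood of $(0,t_0)$.

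The heart of the argument is the choice of commutant. I would construct a classical b-symbol $a$ supported in a conic neighborhood of the outgoing diffractive cone over $(0,t_0)$ with the following structure: $a$ is monotone along the Hamilton flow of the principal symbol $\tau^2-|\xi|^2$ in a region interpolating between the boundary (where the forward bicharacteristics emanate) and the outgoing diffractive front, with $\{\sigma(\square),a\} \ge b^2 - e$, where $b$ cuts off the outgoing front and $e$ is supported in the region controlled by the incoming hypothesis. Quantizing $a$ to $A$ and pairing $i[\square+V,A]u$ with $u$ gives the standard identity
\be
\langle i[\square+V,A]u,u\rangle \;=\; 2\,\mathrm{Re}\,\langle Au,(\square+V)u\rangle \;=\; 0,
\ee
since $(\square+V)u=0$. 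Applying a sharp Gårding inequality to the left side, controlling the contribution from $[V,A]$ by Hardy, and absorbing the error term $\langle Eu,u\rangle$ using the assumption that no singularities come in from the backward cone of influence, gives the desired microlocal Sobolev bound on $Bu$, i.e.\ the absence of singularities on the outgoing diffractive cone.

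The main obstacle I expect is twofold, and lives at the boundary face $r=0$. First, the potential $f/r^2$ is not a bounded perturbation in the ordinary symbol calculus; the commutator $[f/r^2,A]$ must be handled in the b-calculus and then controlled by Hardy --- this is where the lower bound on $f$ is essential and where one must be careful about which weight $\ell$ is used in $H^{s,\ell}_b$. Second, and more seriously, the bicharacteristic flow degenerates at the cone point: forward and backward half-bicharacteristics meet $\{r=0\}$ in finite time, so the notion of ``propagation through'' the point must be interpreted as propagation in the b-cosphere bundle along radial points, and the commutant must be designed to sweep precisely the outgoing diffractive cone without picking up the (non-diffractive) continuations of individual bicharacteristics that are already allowed by standard propagation of singularities. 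Getting the geometry of the commutant right here, rather than the functional-analytic estimates, is the delicate step.
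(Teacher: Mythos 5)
Your overall strategy does coincide with the paper's: blow up $\{r=0\}$, work in the b-calculus, run a Melrose--Wunsch--Vasy style positive-commutator argument with a commutant localized near the radial set over $(0,t_0)$, control the incoming region by the wavefront hypothesis and the potential by Hardy's inequality (and you are right that the lower bound should be read as $f \ge -\left(\tfrac{n-2}{2}\right)^2$; the paper's second formulation states $f > -\lambda(n)^2$).

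There is, however, a genuine gap at the step where you plan to finish by sharp G\aa rding plus ``absorbing $[f/r^2,A]$ by Hardy.'' The obstruction, which occupies most of the paper's proof, is that the weight filtration of the b-calculus is too crude here: quantizing the symbol identity $H_{\square+f/r^2}(a) = -b^2-g+e_1+e_2$ leaves subprincipal remainders that a priori lie only in $\tfrac{1}{r^2}\Psi_b^{-1}(M)$ (e.g.\ terms of the shape $\tfrac{1}{r}D_t\,\Psi_b^{-1}$), and such terms are \emph{not} bounded by the Friedrichs-domain norm $\|D_tu\|^2+\|\nabla u\|^2+\|\tfrac1r u\|^2$, so they cannot be absorbed by Hardy, by induction, or by a more careful choice of the weight $\ell$ alone. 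The paper's remedy is structural: it iterates the commutator argument while explicitly factoring out the domain derivatives $D_t$, $D_r$, $\tfrac1r D_{\theta_i}$ at every step, invokes the Melrose--Vasy--Wunsch lemmas that $[A,D_r]=B+CD_r$ with \emph{weightless} $B,C$ and that commutators of basic symbols with $D_\theta$ are microsupported off the characteristic set, and then trades $D_r$ and $\tfrac1r D_\theta$ for the elliptic $D_t$ (using the equation together with a lemma bounding the difference of $\|d_M A_r u\|^2$ and $\|D_t A_r u\|^2$ by domain-bounded quantities), so that the quantity finally controlled is $\|D_tTBu\|$. Two further points your sketch elides: the commutant is built from cutoffs with smooth square roots precisely so that the leading terms are genuine squares $B^*B$ and $G^*G$, making sharp G\aa rding unnecessary; and the identity $\langle i[A^*A,\square+f/r^2]u,u\rangle=0$ is only formal at the available regularity, so a regularizing family $\Lambda_r$ and a weak-compactness limit are needed. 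In short, the delicate part is exactly this functional-analytic bookkeeping at $r=0$, rather than the geometry of the commutant that you single out.
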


Note that there are no other possible incoming or outgoing singularities at this point except for those in the two cones depicted since they must all travel with unit speed due to a standard result from microlocal analysis. Also note that the in Theorem \ref{theorem1}, the single singularity traveling along the bicharacteristic toward the origin is enough to cause the shower of singularities that make up the diffractive front. Since energy estimates tell us about the propagation of regularity, we can't hope for a better result in this context than the propagation of smoothness along the light cone out of the singularity given smoothness along the entire incoming cone. Results of this type can be thought of as a microlocal version of the local smoothing estimates for the wave equation seen in \cite{Burq_et} that take a further phase variable into account.

Because of the arbitrary bounded smooth function in the potential term, we can't separate variables to find an explicit solution which directly exhibits diffractive behavior. Energy methods become our primary tool, and the resulting theorems deal with the propagation of smoothness, which in turn restricts the behavior of singularities. The result above restricts the outcome of interactions between singularities from the solution and the potential's singular point to only the emission of a diffractive front immediately following an incoming singularity; we will not see the spontaneous emission of singularities from the potential, nor will singularties be trapped and released at a later time. 

\section{First Diffractive Theorem}

We first prove that diffraction exists:
\begin{thm} {\emph{Existence of diffraction:}}
There is an explicit diffractive front corresponding to the inner cone in figure 1 in $u$, the explicit solution of
\be
\left\{ \begin{array}{rcl}
\square u(x,t) + \frac{a}{r^2} u(x,t) &=& 0  \qquad\qquad\qquad (a>0,\; \text{a constant})\\
u_t(x,0)&=&\delta(x-x_0)\\
u(x,0)&=&0
\end{array}\right.
\ee
in $\mathbb{R}^2\times \mathbb{R}$ space-time.
\end{thm}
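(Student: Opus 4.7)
The plan is to exploit the rotational symmetry of (\ref{euclidean_diffraction}) to produce an explicit Fourier-Hankel representation of $u$, then read off its singular support. Working in polar coordinates $(r,\theta)$ centered at $x=0$ with the source at $(r_0,\theta_0)$, the spatial operator $L := -\Delta + a/r^2$ commutes with $\partial_\theta$, so writing $u = \sum_{k\in\mathbb{Z}} u_k(r,t)\,e^{ik(\theta-\theta_0)}$ reduces $L$ on the $k$th mode to the radial Bessel-type operator $L_k := -\partial_r^2 - r^{-1}\partial_r + \nu_k^2\,r^{-2}$, where $\nu_k := \sqrt{k^2+a}$. The generalized eigenfunctions of $L_k$ are $J_{\nu_k}(\lambda r)$, and expanding $\delta(x-x_0)$ via the Fourier series on $S^1$ and the Hankel transform on $\mathbb{R}_+$, then applying the propagator $\sin(t\sqrt{L})/\sqrt{L}$ mode by mode, produces the explicit representation
\begin{equation*}
u(r,\theta,t) \;=\; \frac{1}{2\pi}\sum_{k\in\mathbb{Z}} e^{ik(\theta-\theta_0)} \int_0^\infty \sin(\lambda t)\,J_{\nu_k}(\lambda r)\,J_{\nu_k}(\lambda r_0)\,d\lambda.
\end{equation*}

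Next I would evaluate the radial integral in closed form using classical Bessel-function identities (Weber-Schafheitlin type, or equivalently contour deformation after writing $J_{\nu_k}=\tfrac{1}{2}(H^{(1)}_{\nu_k}+H^{(2)}_{\nu_k})$). In the lens-shaped region $|r-r_0|<t<r+r_0$ bounded by the outer cone of Figure 1, the integral gives the familiar Legendre-type expression whose sum over $k$ reproduces the primary singular front on $|x-x_0|=t$. In the region $t>r+r_0$ the same calculation yields $\pi^{-1}\sin(\nu_k\pi)$ times a smooth function of $\xi := (t^2 - r^2 - r_0^2)/(2rr_0)$ on $\xi>1$. For the free equation ($a=0$) one has $\nu_k=|k|\in\mathbb{Z}$ and every factor $\sin(\nu_k\pi)$ vanishes, which is precisely Huygens' principle; as soon as $a>0$ every $\nu_k$ is irrational, no term vanishes, and a nontrivial tail appears above the outer cone.

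The final step is to analyze the resulting series in the region $t>r+r_0$. Using the large-order asymptotics of $J_{\nu_k}(\lambda r)$ and the Sommerfeld-Watson resummation employed by Cheeger and Taylor \cite{cheeger_taylor}, the sum defines a distribution that is $C^\infty$ in the open set $\{t>r+r_0\}$ away from $r=0$ but develops a conormal branch singularity across the cone $\{t=r+r_0\}$, inherited from the transition $\xi=1$ of the Legendre factor. This cone is the inner cone of Figure 1, the diffractive front we sought.

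The principal obstacle will be the last step: justifying convergence of the $k$-series and identifying its singular support sharply enough to conclude that the diffractive cone lies in $\text{singsupp}(u)$. Termwise bounds alone give only smoothness in the open interior; what must be ruled out is any cancellation across $\{t=r+r_0\}$ that would erase the singularity. The non-integrality of every $\nu_k$, forced by $a>0$, is precisely what prevents such cancellation and distinguishes this problem from its free counterpart.
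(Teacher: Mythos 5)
Your outline follows essentially the same route as the paper (and as Cheeger--Taylor): separation of variables in $\theta$ with $\nu_k=\sqrt{k^2+a}$, the Hankel transform in $r$, closed-form evaluation of the radial integral (the paper uses the Lipschitz--Hankel integral, yielding a Legendre function $Q_{\nu_k-1/2}$), the three-region analysis in $Z=(r^2+r_0^2-t^2)/(2rr_0)$, and the observation that the portion supported inside $\{t>r+r_0\}$ carries a factor $\sin(\pi\nu_k)$ which vanishes identically only in the free case. So the skeleton is right and matches the paper.

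Two points need repair. First, your number-theoretic claim is false as stated: $a>0$ does \emph{not} force every $\nu_k$ to be irrational, e.g.\ $a=3$, $k=1$ gives $\nu_1=2$, and more generally any $a=m^2-k^2$ with $m,k\in\mathbb{Z}$ makes some mode's factor $\sin(\pi\nu_k)$ vanish. What is true (and suffices) is that for fixed $a>0$ only finitely many $k$ can have $\nu_k\in\mathbb{Z}$, so infinitely many modes contribute a nonzero jump; the paper flags exactly this caveat ($a\neq 2nm+m^2$) before passing to its final argument. Second, the step you defer as ``the principal obstacle'' is precisely where the paper does its remaining work, and waving at non-integrality does not by itself close it: the paper computes the limit of the jump between regions II and III as the cone $t=r+r_0$ is approached, obtaining the explicit mode-by-mode value $-\tfrac12(r_1r_2)^{-1/2}\sin(\pi\nu_k)$, and then rules out the degenerate possibility that this jump distribution on the circle is merely a finite sum of Dirac deltas and derivatives (i.e.\ isolated point singularities rather than a genuine front) by comparing Fourier coefficients: a point-supported distribution has coefficients behaving like $\sum_j c_j k^j$, whereas $C\sin(\pi\sqrt{k^2+a})$ is bounded and aperiodic in $k$, so no such identification is possible. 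You should either reproduce an argument of this type or carry out the Sommerfeld--Watson/conormality analysis you cite in enough detail to exclude cancellation; as written, the conclusion that the inner cone lies in the singular support is asserted rather than proved.
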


\begin{proof} We follow the general method used in \cite{cheeger_taylor}. Recall that the solution operator to this perturbed wave equation is $(-\tilde{\Delta})^{-\frac{1}{2}}\sin {t(-\tilde{\Delta})^\frac{1}{2}}$ where $\tilde{\Delta} = \Delta - \frac{a}{r^2}$, $a\in \mathbb{R}^+$, so the fundamental solution is $(-\tilde{\Delta})^{-\frac{1}{2}}\sin {t(-\tilde{\Delta})^\frac{1}{2}} \delta$. We would like an expression for the fundamental solution in terms of polar coordinates to exploit the radial symmetry of the solution, and isolate an expression for the diffractive front in terms of the spectrum of $\tilde{\Delta}$.

We consider $\mathbb{R}^2 = \mathbb{R} \times S^1$ as a cone with base $S^1$, i.e. in polar coordinates $(r,\theta)$. (Our $\theta \in [0,2\pi]$ as usual.) Here, the Laplacian in polar coordinates has the form:
\be
\frac{\partial^2}{\partial r^2}+\frac{1}{r}\frac{\partial}{\partial r}+\frac{1}{r^2}\frac{\partial^2}{\partial\theta^2}
\ee
Define the operator $\nu$ on $S^1$ by $\nu = (\frac{\partial^2}{\partial\theta^2}+a)^\frac{1}{2}$ and $\nu_j = (\mu_j + a)^\frac{1}{2}$ where $\{\mu_j\}$ is the spectrum for $\frac{\partial^2}{\partial\theta^2}$ with eigenfunctions $\varphi(\theta)$. The base $S^1\cong\mathbb{R}/2\pi\mathbb{Z}$, and we get $\nu_n = (n^2 + a)^\frac{1}{2}$ for $n\in \mathbb{Z}$ and $\varphi(\theta) = e^{in\theta}$. Separating variables, we consider the action of $\tilde{\Delta}$ on $g(r,\theta) = \sum_{ij} g_i(r)\varphi_j(\theta)$ namely:
\be\label{radiallap}
\tilde{\Delta} g(r,\theta) = \sum_j-\left(\left[\frac{\partial^2}{\partial r^2}+\frac{1}{r}\frac{\partial}{\partial r}-\left(\frac{\nu_j^2}{r^2}\right)\right] g_j(r)\right)\varphi_j(\theta)
\ee

The form of the radial part strongly suggests \textbf{Bessel's equation}:
\be
\left[\frac{\partial^2}{\partial z^2}+\frac{1}{z}\frac{\partial}{\partial z}-\frac{\nu^2}{z^2}\right] J_{\nu}(z) = -J_{\nu}(z)
\ee
which are solved by the \textbf{Bessel Functions}:
\be
J_\nu(z) = \frac{(z/2)^\nu}{\Gamma(1/2)\Gamma(\nu+1/2)}\int_{-1}^{1}(1-t^2)^{\nu-1/2}e^{izt}dt
\ee

This suggests making use of convolution with the Bessel Functions in a spectral transformation known as the \textbf{Hankel Transform}:
\be
H_\nu(g)(\lambda) = \int_{\mathbb{R}^{+}} g(r) J_\nu(\lambda r) r dr
\ee
for which we know the following proposition:
\begin{prop}
For $\nu \ge 0$ $H_\nu$ extends uniquely from $C_{0}^{\infty} \left(\mathbb{R}^{+}\right)$ to
\be
H_\nu : L^2(\mathbb{R}^{+},r dr)\rightarrow L^2(\mathbb{R}^{+},\lambda d\lambda)
\ee
and for each $g\in L^2(\mathbb{R}^{+},r dr)$:
\be
H_\nu \circ H_\nu g = g
\ee
\end{prop}

(See \cite{cheeger_taylor} for a proof.)

Letting $L_\mu = \left[\frac{\partial^2}{\partial r^2}+\frac{1}{r}\frac{\partial}{\partial r}-\left(\frac{\mu^2}{r^2}\right)\right]$, the radial part of the operator we have $L_{\mu}(J_{\nu}(\lambda r)) = -\gamma^2 J_{\nu}(\lambda r)$ that:
\bea
H_\nu(L_\nu g) &=& \int_{0}^{\infty} L_\nu(J_\nu(\lambda r)) g r dr\\
&=& -\lambda^2 \int_{0}^{\infty} J_\nu(\lambda r) g r dr\\
&=& -\lambda^2 H_\nu(g)
\eea
and this isometry carries the radial portion of $\tilde{\Delta}$ into multiplication by $-\lambda^2$.

Combining this with the Fourier Transform on the spherical variables gives as the Schwarz kernel for the solution:
\be
f(-\tilde{\Delta})(r_1,r_2,\theta_1,\theta_2) = \sum_n e^{in(\theta_1-\theta_2)}\int_0^\infty f(\lambda^2)J_{\nu_n}(\lambda r_1) J_{\nu_n}(\lambda r_2) \lambda d\lambda
\ee
we can write the Schwartz Kernel for our operator $\nu$ for a single Fourier mode $\nu_n$ as:
\be
f(-\tilde{\Delta}) (r_1,r_2,\nu_n) = \int_{0}^{\infty} f(\lambda^2) J_{\nu_n}(\lambda r_1) J_{\nu_n}(\lambda r_2) \lambda \, d\lambda
\ee
Apply this to our actual fundamental solution, $\sqrt{\lambda}\sin (t\sqrt{\lambda}) = \lim_{\epsilon \rightarrow 0}\mathrm{Im}\left(\frac {e^{-(\epsilon + it)\lambda}}{\lambda}\right)$, and then use the \textbf{Lipschitz Hankel Integral} (see \cite{cheeger_taylor}) to obtain the following equality:
\be
\int_{0}^{\infty} e^{-t\lambda}J_{\nu_n}(r_1\lambda)J_{\nu_n}(r_2\lambda)\,d\lambda = \frac{1}{\pi} (r_1 r_2)^{-1/2} Q_{\nu-\frac{1}{2}}\left( \frac{r_1^2+r_2^2+t^2}{2r_1 r_2} \right)
\ee
where $ Q_{\nu-\frac{1}{2}} $ is a \textbf{Legendre function} of second kind.

We take the real solution as the limit of the meromorphic function $ Q_{\nu-\frac{1}{2}}(Z) $ as the $Z$ variable approaches the real line. We will see that $Z>1$ corresponds to the interior of region I, $-1<Z<1$ the interior of region II, and $Z<-1$ the interior region III. Approaching the poles at $Z=-1,1$ from different sides will correspond to the boundary of the three regions defined by the cones in our diagram and we will show there is a non-zero difference along the entirety of the boundary between regions II and III.

To see this, \cite{Lebedev} gives that $Q_{\nu-\frac{1}{2}}$ has the following integral representation:
\be
Q_{\nu-\frac{1}{2}}(Z) = \int_{\cosh^{-1}Z}^{\infty} \frac{e^{s\nu}}{(2\cosh (s) - 2 Z)^{1/2} } \, ds
\ee
The Legendre function is analytic when $Z\notin \mathbb{C} \backslash (-\infty, 1]$; there is a branch cut on the real line and analytic continuation allows us to extend both sides of the equation to the complement of the cut by shifting our contour of integration on the right hand side. As $Z\rightarrow 1$ or $Z\rightarrow -1$, the integral becomes singular, and the directions in which $Z$ approaches these singularities will account for the behavior of the solution on different regions of the lightcone.

Substituting $Z=\cosh(\eta)$ yields:
\be
Q_{\nu-\frac{1}{2}}(\cosh (\eta)) = \int_{\eta}^{\infty} \frac{e^{s\nu}}{(2\cosh (s) - 2 \cosh (\eta))^{1/2} } \, ds
\ee
and note that the cut on $\mathbb{C}$ will change under the $\cosh^{-1}$ transformation in the imaginary line. We have $\cosh \eta \in [-1, 1] \Leftrightarrow \cos i\eta \in [-1, 1] \Leftrightarrow i\eta \in [0,-\pi] \Leftrightarrow \eta \in [0,i\pi]$, where correspondences are $Z=-1$ to $\eta = \pi i$, and $Z=1$ to $\eta = 0$. So the cut becomes $[0,\pi i] \cup [\pi i, \infty + \pi i]$. The upper half plane is transformed into the rectangle enclose by the cut and real line.

We then have three regions to consider on $\mathbb{R}$: I: $\cosh(\eta)\in (1,\infty)$; II: $\cosh(\eta)\in (-1,1)$; III: $\cosh(\eta)\in(-\infty,-1)$ for $Q_{\nu-\frac{1}{2}}(\cosh (\eta))$. The contours taken are the following:

\begin{figure}[htp]
\begin{center}
\fbox{\parbox[c][2.1in][l]{2.4in} 
{\leftline{\includegraphics[scale=0.5]{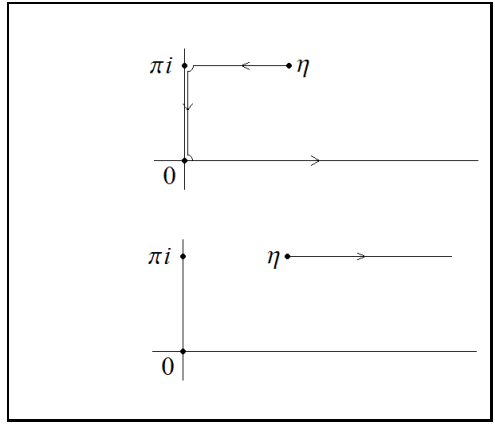}}}
}
\end{center}
\caption{Equivalent contours for case III, we choose the former for its similarity to the contour from case II as we approach $\pi i $ along $[\pi i, \pi i+ \infty]$.}
\label{fig:space}
\end{figure}

In I, we have that the integral is on a purely real contour, and since we are concerned with only the imaginary part of $Q_{\nu-\frac{1}{2}}(\cosh (\eta))$, we have $K(r_1,r_2,\nu)=0$.

In II,  we take the contour (see above II) on the cut. As above, the integral disappears when the contour is on the real line, leaving only the portion from the imaginary segment in $[0,i\pi]$. We translate this into a real integral on a segment between $[0,\pi]$. Taking the imaginary part of the result noting the identity $\cosh(z) = \cos(iz)$, and that an additional $i$ comes from swapping the order of the denominator gives yields:
\be
K(r_1,r_2,\nu_n)=
\frac{1}{\pi}(r_1 r_2)^{-1/2} \int_{0}^{\cos^{-1}\left(\frac{r_1^2+r_2^2-t^2}{2r_1 r_2}\right)}
\frac{\cos\nu_n s}{(t^2-r_1^2-r_2^2+2 r_1 r_2 \cos s)^{1/2}}\, ds
\ee
In III, we have several options for contours that go to $\infty$ without crossing the cut that are equivalent because of analyticity within the rectangle which represents the upper half plane under the $\cosh^{-1}$ transform. We choose our contour as an extension of the contour in II to facilitate calculating the difference between the two (see figure). We add to the portion on $[0,i\pi]$ an additional segment from $\eta$ to $i\pi$ and hence an additional term which we again shift into a real integral where $\beta = \cosh^{-1}\left(\frac{r_1^2+r_2^2-t^2}{2r_1 r_2}\right)$:
\bea
K(r_1,r_2,\nu_n)=
\frac{1}{\pi}(r_1 r_2)^{-1/2}\int_{0}^{\pi}
\frac{\cos\nu_n s}{(t^2-r_1^2-r_2^2+2 r_1 r_2 \cos s)^{1/2}}\, ds \\
-\frac{1}{\pi}(r_1 r_2)^{-1/2}\sin(\pi \nu_n) \int_{0}^{\beta}
\frac{e^{-s\nu_n}}{(2\cosh\beta - 2\cosh s)^{1/2}}\, ds
\eea

\begin{figure}[htp]
\begin{center}
\fbox{\parbox[c][2.2in][l]{2.4in} 
{\leftline{\includegraphics[scale=0.5]{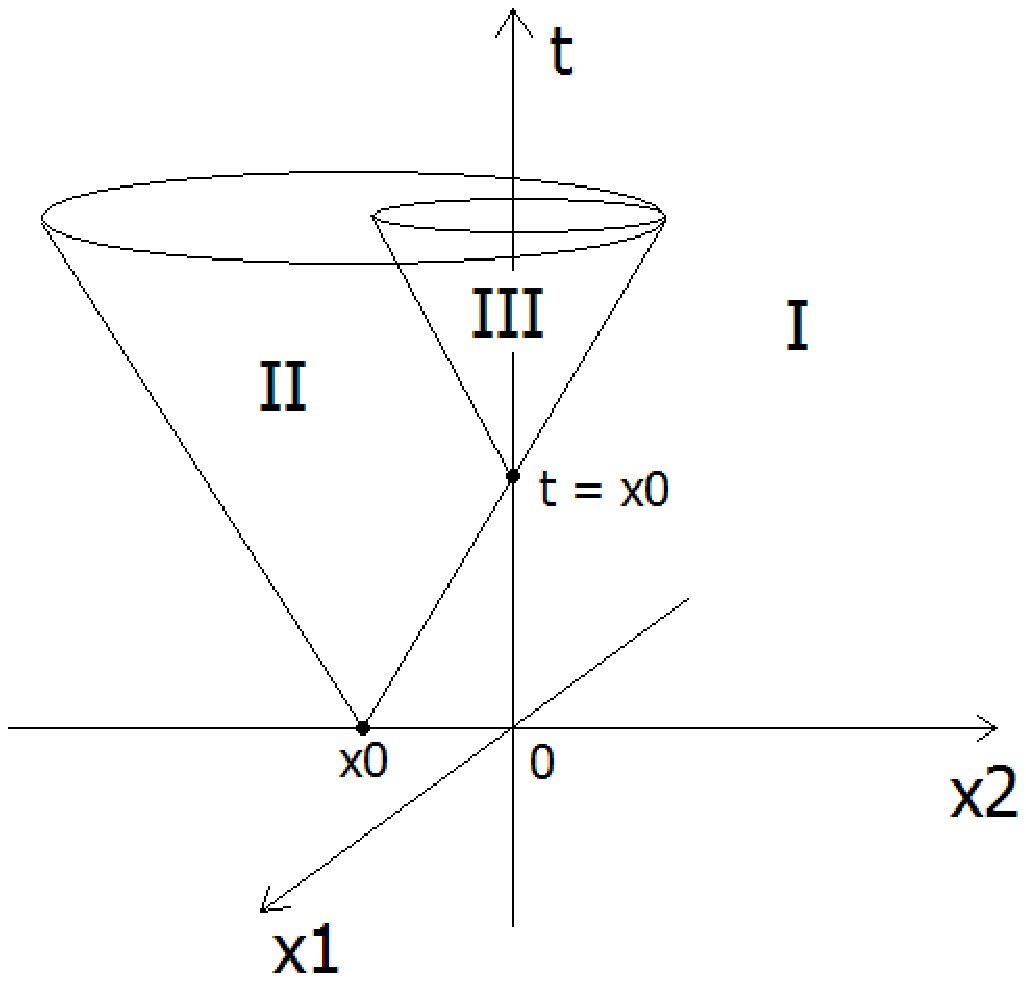}}}
}
\end{center}
\caption{Main front and diffractive front}
\end{figure}

In the integral for case I, we have $\cosh \eta > 1$ so $\cosh^{-1}\left(\frac{r_1^2+r_2^2-t^2}{2r_1 r_2}\right)>0$ which implies $0 < t < |r_1-r_2|$. We are outside the primary lightcone in case I; the fact that our solution is $0$ here is consistent with finite speed of propagation.

In the integral for II, we have "$-1 < \cosh \eta < 1 $" which translates on our contour into $0<\cos^{-1}\left(\frac{r_1^2+r_2^2-t^2}{2r_1 r_2}\right)<\pi$ and hence $|r_1-r_2|<t<r_1+r_2$. We are inside the primary lightcone but outside of the inner lightcone in case II.

Finally for case III, "$\cosh < -1$" which really means we approach the cut from above. We have $\cosh^{-1}\left(\frac{t^2-r_1^2-r_2^2}{2r_1 r_2}\right)>0$ and $t>r_1+r_2$. This means we are inside the inner lightcone.

Letting $\eta$ approach the boundary of case II and III from opposite directions, that is $r_1\searrow t-r_2$ and $r_1\nearrow t-r_2$ respectively, we see that the solutions in case II and III differ by:
\be
-\frac{1}{\pi}(r_1 r_2)^{-1/2} \sin(\pi \nu_n) \int_{0}^{\beta}
\frac{e^{-s\nu_n}}{(2\cosh\beta - 2\cosh s)^{1/2}}\, ds
\ee

If we take a second order approximation for $\cosh$ near $0$, the limit as we approach the boundary cone of the the integral above is:
\bea
\lim_{\beta\rightarrow 0} \int_{0}^{\beta} \frac{e^{-s\nu_n}}{(2 \cosh \beta - 2 \cosh s)^{1/2}}\,ds &=& \lim_{\beta\rightarrow 0} \int_{0}^{\beta} \frac{ds}{(\beta^2-s^2)^{1/2}}\,ds\\
&=& \lim_{\beta\rightarrow 0} \left.\sin^{-1}\left(\frac{s}{\beta}\right)\right|_{0}^{\beta}\\
&=& \frac{\pi}{2}
\eea
and so the solution exhibits a difference of:
\be
-\frac{1}{2}(r_1 r_2)^{-1/2} \sin(\pi \nu_n) 
\ee
on the $n$-th mode of the Fourier decompsition of $\varphi(\theta)$.

Recall the spectrum of our base, $\nu_n = (n^2 + a)^\frac{1}{2}, n\in\mathbb{Z}$. This means if we have $a \neq 2nm+m^2, m\in \mathbb{Z}$, then $\sin \pi \nu_n \neq 0$. By uniqueness of the spectrum, our solution along the inner cone is not $0$.

We need to eliminate the further possibility of the solution being a collection of singularities at discrete points instead of a legitimate front spread across the diffractive cone. The Schwarz representation theorem tells us that this undesirable case can only come from having a sum of Dirac delta functions and its derivatives $\sum_k c_k \delta^{(k)}$, which has the spectrum $\sum_{k>0} c_k n^k$. Our solution has the spectrum $C\sin (\pi\sqrt{n^2 + a})$ for $C$ some constant; the expression grows asymptotically of order $1$ so it can only possibly match a sum Dirac delta functions without derivatives. However, this has the spectrum $\sum_{k>0} c_k$ which is invariant in $n$ and cannot match all the values of $C \sin(\pi\sqrt{n^2 + a})$ over $n \in \mathbb{N}$ for any values of $a \neq 0$ since it becomes a-periodic because of the irrational values that $\sqrt{n^2 + a}$ takes. Hence the fundamental solution on the diffractive cone is a legitimate, non-zero front by the uniqueness of spectrum for distributions. \end{proof}

\section{Second Diffractive Theorem}

By the first theorem, diffraction indeed occurs when a singularity strikes the origin; we would now like to show theorems in more general contexts which restrict where singularities may propagate.

\subsection{First Formulation}

Recall that by the celebrated Duistermaat-H\"ormander theorem, we have away from $r=0$:

\begin{thm} \label{D-Horm}\emph{(Duistermaat-H\"ormander)}. Let $\square+\frac{a}{r^2} u = 0$. WF$(u)$ is a union of maximally extended null-bicharacteristics of the vector field $H_{\sigma(\square+\frac{a}{r^2})}=2\tau \partial_{t}- \sum 2\xi_j \partial_{x_j}$.
\end{thm}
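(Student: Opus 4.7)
The plan is to recognize this as a direct application of the classical Duistermaat-Hörmander propagation theorem for operators of real principal type; the content is essentially to verify the hypotheses on the set $\{r>0\}$ and to compute the Hamilton vector field.

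First I would identify the principal symbol. Since $\frac{a}{r^2}$ is $C^\infty$ on the open set $\{r>0\}$, multiplication by it is a differential operator of order $0$ there, so the principal part of $P := \square + \frac{a}{r^2}$ coincides with that of $\square$:
\[
p(x,t,\xi,\tau) := \sigma_2(P)(x,t,\xi,\tau) = \tau^2 - |\xi|^2.
\]
This symbol is real-valued, homogeneous of degree $2$ in $(\xi,\tau)$, and satisfies $dp \neq 0$ on its characteristic set $\{p=0\}$ away from the zero section, so $P$ is of real principal type on $\{r>0\}$. A direct computation from
\[
H_p = (\partial_\tau p)\partial_t - (\partial_t p)\partial_\tau + \sum_j (\partial_{\xi_j} p)\partial_{x_j} - \sum_j (\partial_{x_j} p)\partial_{\xi_j}
\]
yields, since $p$ has no $(x,t)$-dependence, $H_p = 2\tau\partial_t - 2\sum_j \xi_j \partial_{x_j}$, exactly the vector field in the statement; its null-bicharacteristics lie on $\{\tau^2 = |\xi|^2\}$ and project to the null geodesics of the Minkowski metric.

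With the hypotheses in place I would invoke the classical Duistermaat-Hörmander propagation of singularities theorem (Theorem 26.1.1 in Hörmander's fourth volume): for $Pu = 0$ (or more generally $Pu \in C^\infty$) with $P$ of real principal type, $\text{WF}(u)$ is invariant under the flow of $H_p$ in $T^*(\{r>0\}\times\R_t)\setminus 0$, and hence is a union of maximally extended null-bicharacteristics of $H_p$. The only real subtlety — which is precisely what motivates the remainder of the paper — is that the flow of $H_p$ is only defined within $\{r>0\}$, since $\frac{a}{r^2}$ fails to be smooth at $r=0$; the behavior of singularities when a bicharacteristic reaches the singular locus is not governed by this classical result but rather by the diffractive content of Theorem \ref{theorem2}, where the outgoing cone of possible singularities is substantially larger than what the Hamilton flow alone would predict.
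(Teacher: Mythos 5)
Your proposal is correct and matches the paper's treatment: the paper simply invokes the classical Duistermaat--H\"ormander theorem away from $r=0$ (where $\frac{a}{r^2}$ is smooth and of order zero, so the principal symbol and Hamilton vector field are those of $\square$), with the same caveat that the flow cannot be continued through the singular locus $r=0$. Your verification of the real-principal-type hypotheses and computation of $H_p = 2\tau\partial_t - 2\sum_j \xi_j\partial_{x_j}$ is exactly the implicit content behind the paper's citation.
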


This says the singularities of $\square + \frac{a}{r^2}$ propagate along unit speed geodesics that fan out from the singular concentration at $x_0$ of our initial condition. (When $r \rightarrow 0$ however, we can't propagate past the large singularity.) This theorem is secretly a statement about the restriction of the propagation of smoothness in wave operator solutions along null-bicharacteristics of the Hamilton vector field. In the potential free case, it in turn restricts the propagation of singularities.

This means that the single geodesic striking the origin in our first diffractive theorem is the sole cause of the entire shower of singularities in the diffractive front.  Thus, in formulating an analogous propagation of smoothness theorem as Duistermaat-H\"ormander for the wave operator with potentia, we cannot localize as finely as we could before since there is no hope of restricting the incoming smoothness or the out going smoothness in anything less than an entire cone.

On the other hand, energy methods allow us to gain some generality in the potential:

\begin{thm} {\emph{Propagation of smoothness:}} 
On $\mathbb{R}^n$ for $n\ge 3$ and $\lambda(n) = \frac{n-2}{2}$, let $u\in \tilde{\mathcal{D}}$ be a solution of
\be
\square u(x,t) + \frac{f(r, \theta)}{r^2} u(x,t) = 0
\ee
where $f(r,\theta) \in \mathcal{C}^\infty(\mathbb{R}^n / \left\{0\right\})$, $f > -\lambda(n)^2$ locally, and $\|f\|_{\infty,\emph{loc}}<\infty$.

If there are no singularities of $u$ going into the point $(0,t)$ originating from this point's backward cone of influence, then there are no singularities coming out of its forward diffractive cone as seen in figure \ref{smoothin_smoothout}.
\end{thm}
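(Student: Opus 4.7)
The plan is to execute a microlocal positive-commutator argument at the point $(0,t_0)$, treating the inverse-square singularity of the potential in much the same way that Melrose, Wunsch, and Vasy treat a conic or edge singularity of the underlying manifold in \cite{melrose_wunsch, melrose_wunsch_vasy}. The hypothesis $f > -\lambda(n)^2$ is precisely what makes Hardy's inequality $\int |u|^2/r^2\, dx \le \lambda(n)^{-2}\int |\nabla u|^2\, dx$ usable to show that the natural conserved energy
\[
E(u)(t) = \|u_t\|^2 + \|\nabla u\|^2 + \int \frac{f(r,\theta)}{r^2}|u|^2\, dx
\]
is (locally) coercive and equivalent to the standard $H^1$ energy. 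This coercivity is the key mechanism for absorbing error terms coming from the singular potential in the subsequent commutator estimates.

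First I would localize near $(0,t_0)$ and pass to b-type function spaces generated by the vector fields $r\partial_r$, $\partial_\theta$, $\partial_t$, so that propagation can be formulated on the b-cotangent bundle of the blow-up of $\{r=0\}$ where the Hamilton flow of $\square$ extends up to the front face. The statement to prove becomes: b-Sobolev regularity of $u$ over a neighborhood of the backward cone of influence propagates to b-Sobolev regularity over a neighborhood of the forward diffractive cone.

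Second I would construct a pseudodifferential commutant $A \in \Psi^s$ whose principal symbol $a$ is supported in a thin tube around the outgoing light cone and arranged so that $H_p a = -b^2 + e$, with $b$ elliptic on the outgoing cone and $e$ microsupported near the incoming cone. Pairing $\langle i[P,A^*A]u, u\rangle$ with the equation $Pu = 0$ and applying Gårding would then yield an estimate of the schematic form
\[
\|Bu\|^2 \le C\|Eu\|^2 + (\text{lower-order and potential-commutator remainders}),
\]
and a bootstrap in Sobolev index would upgrade the assumed vanishing of the incoming wavefront set to the vanishing of the outgoing wavefront set.

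The main obstacle I anticipate is the commutator $[A^*A,\, f(r,\theta)/r^2]$, which is \emph{not} automatically of lower order since the potential is not a classical symbol at $r=0$. I expect to handle this by taking $a$ independent of $r$ in a neighborhood of the front face, so that $[A^*A, 1/r^2]$ vanishes there, and by controlling the residual angular commutator $[A^*A, f(r,\theta)]$ via the coercive Hardy energy described above. A secondary difficulty is that the Hamilton flow of $\square$ degenerates at $r=0$, so individual bicharacteristics cannot be pushed through the origin; this is precisely why the conclusion must be stated in terms of whole incoming and outgoing cones rather than pointwise along null-bicharacteristics, as was already flagged in the comparison with Theorem \ref{D-Horm}.
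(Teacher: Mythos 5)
Your overall framing---blow up the origin, work in the b-calculus relative to the Hardy-coercive Friedrichs domain, and run a positive-commutator argument near $r=0$ in the spirit of \cite{melrose_wunsch_vasy}---is the same strategy the paper follows, and your identification of $f>-\lambda(n)^2$ plus Hardy's inequality as the source of coercivity matches the paper's equivalence-of-norms step (Proposition \ref{equiv_of_norms} and Lemma \ref{domain_lemma}). But your proposed cure for what you correctly flag as the main obstacle does not work. In the b-calculus the principal symbol of $[A^*A, r^{-2}]$ is, up to constants, $r^{-2}\,\partial_\xi(a^2)$: it is produced by the $\xi$-dependence of the symbol paired with $r\partial_r(r^{-2})=-2r^{-2}$, not by any $r$-dependence of $a$. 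Since your commutant must depend on $\xi$ (the positive main term is generated precisely by the $-\tfrac{2(\xi^2+|\zeta|^2)}{r^2}\partial_\xi$ part of the Hamilton vector field acting on a symbol monotone in the propagating variable $-\hat{\xi}$; note also that at the radial point $r=0,\ \zeta=0$ the flow degenerates, so a plain ``thin tube around the outgoing cone'' commutant does not by itself give the needed sign there), taking $a$ independent of $r$ near the front face does not make $[A^*A,1/r^2]$ vanish, and this commutator retains the full $r^{-2}$ weight. The paper instead proves $[A^*A,1/r^2]=\tfrac{1}{r^2}C$ with $C\in\Psi_b^{-1}$ and then absorbs the resulting pairing using Hardy's inequality \emph{together with} the exchange of $D_r$ for $D_t$ and smallness of the coefficients obtained by localizing in $r$.

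Relatedly, the proposal is silent on the issue that dominates the paper's proof: the b-calculus filtration records only total weight, so the subprincipal/remainder terms in $i[A^*A,\square+f/r^2]$ are a priori only in $\tfrac{1}{r^2}\Psi_b^{-1}$, which is \emph{not} bounded on the domain $\tilde{\mathcal{D}}$ and cannot be absorbed by the inductive hypothesis; your schematic ``lower-order and potential-commutator remainders'' hides exactly the terms that break a purely principal-symbol Gårding argument. The paper's resolution is to factor out the domain-compatible derivatives $D_t$, $D_r$, $\tfrac{1}{r}D_{\theta_i}$ at every iterative step, to use the improved commutator lemmas (e.g.\ Lemma \ref{commutator_lemma2}, giving $[A,D_r]=B+CD_r$ with weightless $B,C$, and the analogous statement for $[A,1/r^2]$), and to invoke Lemma \ref{swap_lemma} to trade $\|D_r\cdot\|$ and $\|\tfrac{1}{r}D_\theta\cdot\|$ for $\|D_t\cdot\|$ modulo domain-bounded errors, so that the final elliptic quantity is $\|D_tTBu\|$ with $|\tau|$ bounded away from zero; a regularization family $\Lambda_r$ then justifies the formal pairings. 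Without some mechanism of this kind---either the paper's factorization or an equivalent device controlling the $r^{-2}$-weighted remainders---the estimate you write down cannot be closed, so the proposal as it stands has a genuine gap.
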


In the potential, $f$ does not necessarily have a form $f_1(r)f_2(\theta)$ that separates $r$ from $\theta$, so we can't separate variables as before to find an explicit solution and resort to energy methods. In particular, we will use microlocal energy estimates since boundedness and smoothness of $f$ allow commutator arguments to be made in the style of Duistermaat-H\"ormander and borrow many of the finer techniques used in \cite{melrose_wunsch_vasy}. We are not getting as finely localized a result as we could on our usual Euclidean space, but this result will imply that we don't have spontaneous emission of singularities from the origin due to the potential or trapping of incoming singularities with a delayed release. This is an analogous microlocal version of the local smoothing estimates that appear in \cite{Burq_et}.

Our solutions all belong to a domain which on the spatial variables we pick to be the Friedrichs extension for the quadratic form associated with our operator $\Delta +\frac{f(r,\theta)}{r^2}$:
\be
Q(u):= \int_{\mathbb{R}^n} |\nabla u(x)|^2 + \frac {f(r,\theta)}{r^2} |u(x)|^2 dx
\ee
We will have a more thorough discussion of domains after we define the b-Calculus and its associated b-Wavefront set as well as b-Sobolev spaces. Ultimately, we will want to define all of these objects with respect to our domain, which we will prove to be equivalent to b-Sobolev Spaces using Hardy's Inequality.

We have suppressed the microlocal details of our thereom in its first formulation above. Indeed, our statement refers to incoming and outgoing singularities, which is best formulated in terms of the wavefront set for our solution. This is analogous to the following formulation of Duistermaat-H\"ormander where $c(t)$ is a parametrization of our null-bicharacteristic:

\begin{flushleft}
\textbf{Theorem \ref{D-Horm}'} \emph{Under the hypotheses of \ref{D-Horm} and WLOG letting our point of interest be $t=0$, if $\emph{WF}(u) \cap \{c(t)|\ t<-\epsilon\} = \emptyset$ for some $\epsilon > 0$ then $c(0) \notin \emph{WF}(u)$}.
\end{flushleft}

Due to the prescence of a radial point in the vector field, it will turn out that the best propagating variable to use will be the momemtum variable $\xi$ corresponding to $r$. However, along constant speed geodesics in Euclidean space, this variable makes an abrupt jump as we cross $0$ from negative to positive for any geodesic striking the origin. It is preferable to use the rescaled $r\partial_r$ to generate our vector field and correspond this to the momentum $\xi$. This yields a gentle propagation through the origin with the momentum vanishing to $0$ as it approaches the origin and then gradually increasing in speed as it leaves.

The presence of $r\partial_r$ motivates the use of the \textbf{b-Pseudodifferential Calculus} when formulating our commutant and a \textbf{blowup} at the origin in order to accomodate these operators. These can be thought of as the pseudo-differential calculus generalized from the vector fields generated by $r\partial_r, \partial_\theta, \partial_t$.

\subsection{Blowup and b-Pseudodifferential Calculus}

\begin{figure}[htp]
\begin{center}
\fbox{\parbox[c][2.2in][l]{2.4in} 
{\leftline{\includegraphics[scale=0.5]{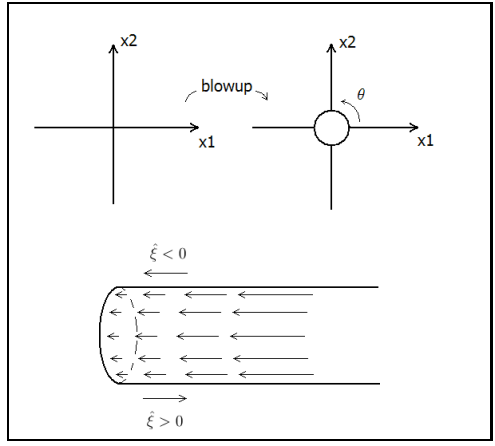}}}
}
\end{center}
\caption{Blowup with circle at ${r=0}$, rescaled $r\partial_r$ vector field, and propagating variable.}
\end{figure}

Here, the result of our \textbf{blow-up} on $\mathbb{R}^n$ is denoted $[\mathbb{R}^n,0]\cong\mathbb{R}^+ \times \mathbb{S}^{n-1}$; we are continuing polar coordinates from $\mathbb{R}^n-\{0\}$ in a one-to-one way to the origin by joining a sphere. Letting $r$ be our boundary defining function and $\theta_i$ represent spherical variables, the b- vector fields are generated by $r\partial_r, \partial_{\theta_{i}}, \partial_t$; these vectors are tangent to the boundary sphere of our blown up space. In particular, note that a flow in the $r\partial_r$ direction will slow down as we approach the origin (see figure). The bi-filtered *-algebra of pseudodifferential operators associated to these vector fields is known as the b-calculus.

The \textbf{b-Calculus} on a manifold with boundary $M$, whose formalization is due to Melrose (see \cite{melrose}) is a microlocal generalization of vector fields that are tangent to our blown up boundary, spanned in $\mathcal{C}^\infty$ by $r\partial_r$, $\partial_t$ and $\partial_{z_i}$ which correspond to the symbols $\xi$, $\tau$, and $\zeta_i$. The dual to such tangent vectors is \textbf{b-cotangent bundle} $^bT^*M$. The dual of $r\partial_r$ is $\frac{dr}{r}$, $\partial_{\theta_{i}}$ is $d\theta_i$, and $\partial_t$ is $dt$. We denote the coordinates with respect to this basis by $\xi$, $\zeta$, and $\tau$ thus making our canonical one-form $\xi \frac{dr}{r} + \zeta_i d\theta^i + \tau dt$. The \textbf{co-sphere bundle} is $^bS^*M = (^bT^*M-\{0\})/\mathbb{R}^+$, and it is here where our characteristic set lives and where our propagation takes place. 

The b-Calculus of pseudo differential operators is denoted $\Psi^{m,l}_b(M)$. It is a bi-filtered ($m$ for differential order, $l$ for weight order) *-algebra of operators acting on $\dot{\mathcal{C}}^\infty$ and $\mathcal{C}^{-\infty}$, smooth functions that vanish to infinite order at the boundary and their corresponding dual respectively. They satisfy:

	\begin{description}
		\item{\textbf{(I)}} $\text{Diff}^m_b(M)\subset \Psi^{m,0}_b$,
		\item{\textbf{(II)}} for $x$ a boundary defining function of $M$, we have $x^l\in \Psi^{0,l}_b(M)$ and $x^l\Psi^{m,0}_b(M) = \Psi^{m,l}_b(M)$,
		\item{\textbf{(III)}} there is a \textbf{principal symbol map} is a bi-filtered *-algebra homomorphism which maps to weighted polyhomogeneous symbols
			\be
			^b\sigma_{m,l}:\Psi^{m,l}_b(M)\rightarrow x^l\ S^m_{hom}(^bT^*M \backslash {0})
			\ee
		\item{\textbf{(IV)}} the principal symbol sequence is exact
			\be
			0\rightarrow \Psi^{m-1,l}_b(M) \hookrightarrow \Psi^{m,l}_b(M) \rightarrow x^l S^m_{hom}(^b T^*M \backslash {0})
			\ee
		\item{\textbf{(V)}} for $A\in \Psi^{m,l}_b(M), B\in \Psi^{m',l'}_b(M)$ with principal symbols $a$ and $b$ of respective orders, $[A,B]\in \Psi^{m+m'-1,l+l'}(M)$ obeys
			\be
			^b\sigma_{m+m'-1,l+l'}([A,B])= \frac{1}{i}\{a,b\}
			\ee
		\item{\textbf{(VI)}} all $A\in \Psi^{0,0}$ on $\dot{\mathcal{C}}^{\infty}(M)$ extend by continuity to a bounded $L^2$ operator.
	\end{description}

Notice that the original weight of $r^{-l}$ remains in the residual terms quotiented away in the principal symbol map. For our commutator argument which iterates on successively on the level of the principal symbol, this will prove to be too crude. We will eventually prove lemmas that allow us to explicitly place weights on specific differential operators which we factor out at each successive step.

We say $A\in \Psi^{m,l}_b$ is \textbf{elliptic} at $p \in ^bS^*M$ if its principal symbol has an inverse near $p$ in $r^{-l}\mathcal{C}(^bS^*M):=r^{-l} S^{-m}_{hom}(^bT^*M)/S^{-m-1}_{hom}(^bT^*M)$. We call these points $Ell_b(A)$. There is a corresponding concept of microsupport as well, given by the essential support of an operator's (left) symbol, denoted WF'$_b(A)$. This obeys

\begin{description}
\item{\textbf{(I)}} \emph{WF}'$_b(AB) \subset$ \emph{WF}'$_b (A)\cap$ \emph{WF}'$_b (B)$
\item{\textbf{(II)}} For $A\in \Psi^{m,l}_b(M)$ and $p\in Ell_b(A)$, there is a parametrix $Q\in\Psi^{-m,-l}(M)$ such that
\be
p\notin \emph{WF}'_b(QA-I)\cup\emph{WF}'_b(AQ-I)
\ee
\item{\textbf{(III)}} For $A \in \Psi^{m,l}_b(M)$ and \emph{WF}'$_b(A)=\emptyset$, we have $A\in \Psi^{-\infty,l}_b(M)$
\end{description}
Note that $\emph{WF}_b$ corresponds to the usual $\emph{WF}$ away from the boundary.

Letting $L^2_b(M) = \left\{ u \in L^2_{loc}(X^\circ); \int_X |u|^2 \frac{dr}{r} d\sigma \right\}$, we have corresponding b-Sobolev spaces defined by 
\begin{defi}
$u\in H^{m,l}_b(M) \Leftrightarrow \Psi^{m,-l}_b(M) u \subset L^2_b(M)$.
\end{defi}

We can define \textbf{b-Wavefront} set in $^bT^*M$ for a distribution in an analogous way too:

\begin{defi}
$\emph{WF}^{m,l}_b (u) = \left\{p\in ^bT^*M| \exists A \in \Psi^{0,0}_b \emph{elliptic at p}, Au \in H^{m,l}_b(M)\right\}^c$
\end{defi}

and the usual properties hold:
\begin{description}
\item{\textbf{(I)}}	\emph{WF}$^{m,l}_b(u)$ is closed and conic,
\item{\textbf{(II)}}	\emph{WF}$^{m,l}_b(u) \cap T^*M ^\circ= \emph{WF}^{m,l}(u) $
\item{\textbf{(III)}}	$\bigcap_{m,l}H^{m,l}_b(M) = \dot{\mathcal{C}}^\infty(M), \bigcup_{m,l}H^{m,l}_b(M) = \mathcal{C}^{-\infty}(M)$
\item{\textbf{(IV)}}	$A \in \Psi^{m',l'}_b(M)$ maps $A: H^{m,l}_b(M) \rightarrow H_b^{m-m',l+l'}(M)$
\item{\textbf{(V)}}	for $m \leq m'$ $\emph{WF}^{m,l}_b(u) \subset \emph{WF}^{m',l}_b(u)$
\item{\textbf{(VI)}}	for $u\in H^{-\infty,l'}_b(u)$ and $A\in\Psi^{k,l}(u)$
\be
\emph{WF}^{m,l+l'}_b(Au) \subset \emph{WF}_b(A) \cap \emph{WF}^{m+k,l'}_b(u),
\ee
and
\be
\emph{WF}^{m+k,l'}_b(u) \backslash \emph{WF}^{m,l+l'}_b(Au) \subset (Ell_b A)^c.
\ee
\item{\textbf{(VII)}}	$H^{m,l}_b\hookrightarrow H^{m',l'}_b$ is compact if $m<m'$ and $l<l'$. In particular, operators in $\Psi^{m,l}_b$ are compact if $m<0$, $l>0$.
\end{description}

\subsection{Domains}

We now establish a domain $\tilde{\mathcal{D}}$ in which all of our arguments will take place. Let $\mathcal{D}$ denote the Friedrichs form domain of $\Delta+\frac{f(r,\theta)}{r^2}$, so the closure of
$\dot{\mathcal{C}}^\infty(M)$ with respect to the quadratic form:
\be
Q(u):= \int_{\mathbb{R}^n} |\nabla u(x)|^2 + \frac {f(r,\theta)}{r^2} |u(x)|^2 dx
\ee
We then define $\tilde{\mathcal{D}}$  by adding in $D_t$:
norm $\|u\|_{\mathcal{D}}^2=\|D_t u\|^2+\|\nabla u\|^2_{L^2}+\|\frac{1}{r}u\|^2_{L^2}$.

The $s/2$-th power of the operator $\Delta$ and $D_t$ will define $\mathcal{D}_s$ analogously.

\begin{lem}\label{domain_lemma}
For $u\in\dot{\mathcal{C}}^\infty(M)$ and $n\ge 3$, we have
\be
\frac{1}{C}\|u\|^2_{\tilde{\mathcal{D}}} \leq \langle \frac{f}{r^2}u,u\rangle+\|\frac{1}{r}D_\theta u\|^2 + \|D_r u\|^2 + \|D_t u\|^2 \leq C \|u\|^2_{\tilde{\mathcal{D}}}
\ee
\end{lem}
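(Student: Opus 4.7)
The plan is to reduce the equivalence to the classical Hardy inequality in $\mathbb{R}^n$ for $n\geq 3$, namely
\[
\lambda(n)^2\int_{\mathbb{R}^n}\frac{|u|^2}{r^2}\,dx \;\leq\; \int_{\mathbb{R}^n}|\nabla u|^2\,dx,\qquad u\in\dot{\mathcal{C}}^\infty(M),
\]
which precisely matches the borderline constant appearing in the hypothesis $f>-\lambda(n)^2$. The first structural observation I would make is the polar decomposition of the gradient: writing $\nabla u=\partial_r u\,\hat r+\frac{1}{r}\nabla_\theta u$ and integrating in $r^{n-1}\,dr\,d\sigma$ gives the pointwise--in--norm identity
\[
\|\nabla u\|_{L^2}^2 \;=\; \|D_r u\|^2+\Bigl\|\tfrac{1}{r}D_\theta u\Bigr\|^2,
\]
so that the only difference between $\|u\|_{\tilde{\mathcal{D}}}^2$ and the middle quantity is the substitution of $\langle\tfrac{f}{r^2}u,u\rangle$ for $\|\tfrac{1}{r}u\|^2$, plus the shared $\|D_t u\|^2$.

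The upper bound is then essentially immediate: the local boundedness hypothesis $\|f\|_{\infty,\mathrm{loc}}<\infty$ yields $\langle\tfrac{f}{r^2}u,u\rangle\leq \|f\|_{\infty}\|\tfrac{1}{r}u\|^2$, and combining this with the polar decomposition of $\|\nabla u\|^2$ gives the right half of the inequality with $C=\max(1,\|f\|_\infty)$.

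The lower bound is the meat of the lemma and is where I would use Hardy. Since $f>-\lambda(n)^2$ locally, there is $\epsilon>0$ with $f\geq -\lambda(n)^2+\epsilon$ on the relevant support, so
\[
\Bigl\langle\tfrac{f}{r^2}u,u\Bigr\rangle \;\geq\; \bigl(-\lambda(n)^2+\epsilon\bigr)\,\bigl\|\tfrac{1}{r}u\bigr\|^2.
\]
I would then split off a fraction $\delta\in(0,1)$ of $\|D_r u\|^2+\|\tfrac{1}{r}D_\theta u\|^2=\|\nabla u\|^2$ and apply Hardy to it, obtaining
\[
\|D_r u\|^2+\bigl\|\tfrac{1}{r}D_\theta u\bigr\|^2 \;\geq\; (1-\delta)\|\nabla u\|^2+\delta\lambda(n)^2\bigl\|\tfrac{1}{r}u\bigr\|^2.
\]
Choosing $\delta=1-\epsilon/(2\lambda(n)^2)$ leaves a positive coefficient $\epsilon/2$ on $\|\tfrac{1}{r}u\|^2$ after summing with the $f$ term, while retaining a positive fraction of $\|\nabla u\|^2$ and of $\|D_t u\|^2$. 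Re-decomposing $\|\nabla u\|^2$ back into its radial and spherical pieces recovers each of the four terms in $\|u\|_{\tilde{\mathcal{D}}}^2$ with a common positive constant, proving the left inequality.

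The main obstacle is simply finding the right weight $\delta$ that allows Hardy to absorb the (possibly negative) $f$ contribution without destroying the remaining $\|\nabla u\|^2$ budget; the strict inequality $f>-\lambda(n)^2$ is exactly what makes such a $\delta<1$ exist. A minor technical point to handle carefully is that the hypothesis is only local, so one should either localize via a cutoff and treat the exterior region (where $1/r$ is bounded) separately, or interpret $M$ as a compact neighborhood of the blown-up origin; in either case the constant $C$ depends on the compact set, which is harmless for the subsequent microlocal arguments.
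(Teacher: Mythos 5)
Your proposal is correct: the polar splitting $\|\nabla u\|^2=\|D_ru\|^2+\|\tfrac1r D_\theta u\|^2$ reduces the upper bound to $\langle\tfrac{f}{r^2}u,u\rangle\le\|f\|_\infty\|\tfrac1r u\|^2$, and your $\delta$-split of the gradient term combined with Hardy's inequality and the strict pointwise bound $f\ge-\lambda(n)^2+\epsilon$ does yield the coefficient $\epsilon/2$ on $\|\tfrac1r u\|^2$ and a retained fraction $\epsilon/(2\lambda(n)^2)$ of $\|\nabla u\|^2$, which is exactly what the left inequality needs. The route differs from the paper's in how the possibly negative potential term is absorbed. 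The paper disposes of the lemma in one line (Hardy bounds $\|\tfrac1r u\|$ by $\|D_r u\|$) and leans on its separately proved norm-equivalence proposition, quoted from Burq--Planchon--Stalker--Tahvildar-Zadeh, whose lower bound $c_1\|\nabla u\|^2\le Q(u)$ is obtained by integrating radially and using positivity of the operator $\Delta_{\mathbb{S}^{n-1}}+f(r,\theta)+\lambda(n)^2$ on each sphere; that spectral, sphere-by-sphere hypothesis is weaker than your pointwise bound, since a dip of $f$ below $-\lambda(n)^2$ can be compensated by the angular derivative term, so the paper's mechanism covers a slightly larger class of $f$ (and, frankly, supplies the argument the paper's own one-line proof of the lemma suppresses). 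Your version, by contrast, is more elementary and self-contained, uses only the hypothesis as literally stated in the theorem ($f>-\lambda(n)^2$ locally), and proves both inequalities of the lemma in one pass; the price is that the strict inequality must be made uniform, which is where your closing remark about localizing (or working on a compact neighborhood of the blown-up origin, with $C$ depending on the compact set) is the right and necessary caveat --- the paper is equally loose on this global-versus-local point.
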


This requires \textbf{Hardy's Inequality} which accounts for the $n\ge 3$ dimensional condition in our theorem.
\begin{lem} \emph{Hardy's Inequality} (from \cite{hardy}):For $u \in H^1(R^n)$, $n\ge 3$, $u(0)=0$ we have:
\be
\int_{\mathbb{R}^n}\left|\frac{u(x)}{|x|}\right|^2 dx \leq \left( \frac{2}{n-2} \right)^2 \int_{\mathbb{R}^n}|\partial_r u(x)|^2 dx
\ee
where the constant $\frac{1}{\lambda^2}$ is the best possible.
\end {lem}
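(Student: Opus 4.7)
The plan is the classical approach: polar coordinates plus a single integration by parts. First I would reduce to test functions $u \in \mathcal{C}_c^\infty(\mathbb{R}^n \setminus \{0\})$, which is dense in $H^1(\mathbb{R}^n)$ precisely because $n \ge 3$ makes the origin a set of $H^1$-capacity zero; the hypothesis $u(0) = 0$ then becomes automatic in the trace sense, and the general case follows by continuity. In polar coordinates $x = r\omega$ on $\mathbb{R}^+ \times S^{n-1}$ one has $dx = r^{n-1}\,dr\,d\sigma(\omega)$, so the full inequality reduces to the per-ray estimate
\be
\int_0^\infty |u(r\omega)|^2\, r^{n-3}\,dr \;\le\; \left(\frac{2}{n-2}\right)^2 \int_0^\infty |\partial_r u(r\omega)|^2\, r^{n-1}\,dr
\ee
for each $\omega \in S^{n-1}$, which I then integrate over the unit sphere to recover the full statement.

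The key identity is $r^{n-3} = \frac{1}{n-2}\frac{d}{dr}(r^{n-2})$. Integrating by parts in $r$ on the left-hand side --- with both boundary contributions at $0$ and $\infty$ vanishing since $u$ has compact support away from the origin --- yields
\be
\int_0^\infty |u|^2 r^{n-3}\,dr \;=\; -\frac{2}{n-2}\,\mathrm{Re}\int_0^\infty u\,\overline{\partial_r u}\,r^{n-2}\,dr.
\ee
Splitting $r^{n-2} = r^{(n-3)/2}\cdot r^{(n-1)/2}$ and applying Cauchy--Schwarz bounds the right side by $\frac{2}{n-2}$ times the product of the square roots of the two integrals appearing in the per-ray estimate; dividing through by the common factor and squaring produces the desired inequality.

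For sharpness, I would test against smooth truncations of the formal extremizer $u_\ast(x) = |x|^{-(n-2)/2}$. A direct computation shows $|u_\ast|^2/r^2 = r^{-n}$ and $|\partial_r u_\ast|^2 = \frac{(n-2)^2}{4} r^{-n}$, so the formal ratio of the two sides of Hardy's inequality is exactly $(2/(n-2))^2 = \lambda(n)^{-2}$; approximating $u_\ast$ by $\chi(|x|/\epsilon)\chi(R/|x|) u_\ast$ for suitable cutoffs and letting $\epsilon \to 0$, $R \to \infty$ shows no smaller constant works. The only nontrivial point in the entire argument is the density reduction and the concomitant vanishing of boundary terms at $0$; this is the sole place where $n \ge 3$ is used, and it is also the reason why no such inequality can hold in dimension two.
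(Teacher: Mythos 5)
Your argument is correct: the reduction by density to $\mathcal{C}_c^\infty(\mathbb{R}^n\setminus\{0\})$, the per-ray integration by parts using $r^{n-3}=\frac{1}{n-2}\frac{d}{dr}\left(r^{n-2}\right)$, the Cauchy--Schwarz step, and the sharpness check via truncations of $|x|^{-(n-2)/2}$ are all sound, and you correctly identify $n\ge 3$ as the source of both the density statement and the vanishing boundary terms. The paper offers no details of its own, deferring to \cite{hardy} for ``the proof using integration by parts,'' which is precisely the classical argument you have written out in full.
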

\emph{Proof:} 
See \cite{hardy} for the proof using integration by parts. $\Box$

We have as a special case from proposition 1 of \cite{Burq_et} that under our assumptions of dimension above on $f$ in our potential which guarantee that for the Laplace-Beltrami operator on the sphere $\displaystyle{\not}\Delta$, $\displaystyle{\not}\Delta + f(r,\theta) + \lambda(n)^2$ ($\lambda(n) := \frac{n-2}{2}$) is a positive operator on every sphere, we have:
\begin{prop} {\emph{Equivalence of Norms:}} \cite{Burq_et}\label{equiv_of_norms} There are constants $c_1$ and $c_2$ such that:
\be
c_1 \|\nabla u(x)\|^2 \leq Q(u) \leq c_2 \|\nabla u(x)\|^2
\ee
\end{prop}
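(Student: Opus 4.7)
The plan is to prove both inequalities by the conformal substitution $v(r,\theta):=r^{\lambda(n)}u(r,\theta)$, which absorbs the singular $1/r^2$ weight into an additive shift of the spherical Laplacian by exactly $\lambda(n)^2$ --- precisely the shift appearing in the positivity hypothesis on $\displaystyle{\not}\Delta+f+\lambda(n)^2$. Write $\lambda:=\lambda(n)$. The upper bound falls immediately out of Hardy alone: since $|f|\le\|f\|_\infty$,
$$\left|\int\frac{f}{r^2}|u|^2\,dx\right|\le \|f\|_\infty\int\frac{|u|^2}{r^2}\,dx\le \frac{\|f\|_\infty}{\lambda^2}\int|\partial_r u|^2\,dx\le \frac{\|f\|_\infty}{\lambda^2}\|\nabla u\|^2,$$
so $Q(u)\le (1+\|f\|_\infty/\lambda^2)\|\nabla u\|^2$.

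For the lower bound, I would express $\|\nabla u\|^2$ and $Q(u)$ in polar coordinates after the substitution $u=r^{-\lambda}v$. Since $\lambda>0$ for $n\ge 3$ and $u\in\dot{\mathcal{C}}^\infty$, $v$ vanishes at both $r=0$ and $r=\infty$, so the cross term produced by expanding $|\partial_r u|^2$ integrates by parts in $r$ to zero. After cancellation of the powers of $r$ against the $r^{n-1}\,dr\,d\theta$ measure, one obtains the clean identities
$$\|\nabla u\|^2=\int r\,|\partial_r v|^2\,dr\,d\theta+\int\frac{\langle(\displaystyle{\not}\Delta+\lambda^2)v,v\rangle_{S^{n-1}}}{r}\,dr,$$
$$Q(u)=\int r\,|\partial_r v|^2\,dr\,d\theta+\int\frac{\langle(\displaystyle{\not}\Delta+f+\lambda^2)v,v\rangle_{S^{n-1}}}{r}\,dr.$$

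Granted a uniform spectral gap $\displaystyle{\not}\Delta+f(r,\cdot)+\lambda^2\ge c\,I$ on $L^2(S^{n-1})$ with some $c>0$, I would split the spherical integrand of $Q(u)$ as the convex combination $\epsilon(\displaystyle{\not}\Delta+f+\lambda^2)+(1-\epsilon)(\displaystyle{\not}\Delta+f+\lambda^2)$: the pointwise bound $f+\lambda^2\ge 0$ lets me discard the $\epsilon(f+\lambda^2)$ piece while retaining $\epsilon\langle\displaystyle{\not}\Delta v,v\rangle=\epsilon\int|\nabla_\theta v|^2$, and the spectral gap applied to the $(1-\epsilon)$-piece yields $(1-\epsilon)c\int|v|^2$. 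For any $\epsilon\in(0,1)$ this produces $Q(u)\ge c_1\|\nabla u\|^2$ with $c_1=\min\bigl(1,\,\epsilon,\,(1-\epsilon)c/\lambda^2\bigr)>0$, as desired.

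The main obstacle will be extracting a uniform spectral gap $c>0$ from the hypothesis: it is phrased pointwise in $r$, whereas the convex-combination step needs a single $c$ valid for all $r$ simultaneously. Since $Q$ is tested against $u\in\dot{\mathcal{C}}^\infty(M)$ (and the two-sided bound is ultimately needed locally), I expect a continuity/compactness argument on the sphere-valued family $r\mapsto\displaystyle{\not}\Delta+f(r,\cdot)+\lambda^2$, leveraging the locally uniform bound $\|f\|_{\infty,\text{loc}}<\infty$, to upgrade the pointwise positivity to a uniform lower bound on any precompact interval of $r$-values; that is the delicate step.
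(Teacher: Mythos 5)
Your proof is correct under the paper's hypotheses, but the lower bound is obtained by a genuinely different route. The upper bound is identical to the paper's (Hardy plus $|f|\le\|f\|_\infty$, giving $c_2=1+\|f\|_\infty/\lambda^2$). For the lower bound, the paper keeps the variable $u$: it uses Hardy once to trade $\int|\partial_r u|^2$ for $\lambda^2\int |u|^2/r^2$, invokes the sphere-wise positivity to get the intermediate estimate $Q(u)\ge\delta^2\|\tfrac1r u\|^2$, and then closes with the algebraic rearrangement $Q(u)-c_1\|\nabla u\|^2=(1-c_1)Q(u)+c_1\int \tfrac{f}{r^2}|u|^2\ge\bigl((1-c_1)\delta^2-c_1\|f\|_\infty\bigr)\|\tfrac1r u\|^2\ge0$ with $c_1=\delta^2/(\delta^2+\|f\|_\infty)$. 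You instead perform the ground-state substitution $v=r^{\lambda}u$, which in effect reproves Hardy (the vanishing cross term is exactly the standard Hardy argument) and yields exact polar-coordinate identities, then close with a convex split of $\displaystyle{\not}\Delta+f+\lambda^2$. Your constant $c_1=\min(1,\epsilon,(1-\epsilon)c/\lambda^2)$ has the mild advantage of not involving $\|f\|_\infty$, but note that your split uses the pointwise bound $f+\lambda^2\ge0$ (available here since the theorem assumes $f>-\lambda(n)^2$), which is strictly stronger than the operator positivity of $\displaystyle{\not}\Delta+f+\lambda^2$ that the paper and \cite{Burq_et} actually assume; the paper's rearrangement only needs the operator statement, so it covers potentials that dip below $-\lambda^2$ pointwise. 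Finally, your worry about uniformity in $r$ of the spectral gap is fair but does not put you behind the paper: the paper simply asserts the existence of a single $\delta>0$ for all spheres, so both arguments rest on the same uniform positivity input, and your proposed compactness argument on precompact $r$-intervals (using $\|f\|_{\infty,\mathrm{loc}}<\infty$) is, if anything, more careful than what is written there.
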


So the domain of our space under powers of $P:= \Delta + \frac{f(r,\theta)}{r^2}$ is equivalent to the standard homogeneous Sobolev norm based on the powers of $\Delta$.

\begin{proof} To show \ref{equiv_of_norms}, we can pick $c_2 = \left(1+\frac{\|f\|_\infty}{|\lambda|^2}\right)$ since from Hardy's Inequality:
\be
Q(u)\le \int |\nabla u|^2 + \frac{\|f\|_\infty}{|x|^2}|u|^2 \le \left(1+\frac{\|f\|_\infty}{|\lambda|^2}\right) \|\nabla u\|^2_{L^2}
\ee
To find $c_1$ (see statement of proposition), we integrate radially. Because of the positivity of $\displaystyle{\not}\Delta + f(r,\theta) + \lambda^2$ on every sphere, there is some $\delta > 0$ such that:
\bea
Q(u) &=& \int_0^{\infty}\int_{|x|=r} |\partial_r u|^2 + \frac{1}{r^2}|\nabla_\theta u|^2 + \frac{f(r,\theta)}{r^2}|u|^2 d\sigma dr \\
&\ge& \int_0^{\infty} \frac{1}{r^2}\int_{|x|=r} |\nabla_\theta u|^2 + (f(r,\theta)+\lambda^2)|u|^2 d\sigma dr\\
&\ge& \int_{0}^{\infty}\frac{\delta^2}{r^2} \int_{|x|=r} |u|^2 d\sigma dr \\
&=& \delta^2 \|\frac{1}{r}u\|^2_{L^2}
\eea
Letting $c_1=\frac{\delta^2}{\delta^2 + \|f\|_{\infty}}$ we have using the above:
\bea
Q(u) - c_1 \|\nabla u\|^2_{L^2} &=& (1-c_1)Q(u) + c_1 \int_{\mathbb{R}^n} \frac{f(r,\theta)}{r^2}|u|^2 dx\\
&\ge&\int_{\mathbb{R}^n} (-c_1 f(r,\theta)+(1-c_1)\delta^2)\frac{|u|^2}{|x|^2}dx
\ge 0
\eea
\end{proof}

\begin{proof} In \ref{domain_lemma}, clearly the only term that arises that is not included in the definition of the domain is $\|\frac{1}{r} u\|$ which by Hardy's Inequality is bounded by $\|D_r u\|$ and hence by the domain norm.
\end{proof}

Note that when extending this domain, we require an additional $\|u\|_{L^2}$ term. Hardy's inequality will hold with this extra term if we decompose $u$ by an appropriate cutoff function, supported at the origin with sufficiently small mass, i.e. $u = \psi u + (1-\psi) u$.

We also define a local version of the norm:
\be
\|u\|_{\cdot, loc}=\|\phi u\|_{\cdot}
\ee
for $\phi \in \mathcal{C}_c^\infty(M)$ fixed to contain the region of interest.

b-Pseudodifferential operators interact nicely with this domain, in particular, 0-th order operators are still bounded and commute as expected with b-Differential operators which can be seen by using expanding the domain definition and using the usual theorems. This allows us to define b-Sobolev spaces with respect to this domain:

\begin{defi}
For $m\ge0$, let $H^{m}_{\tilde{\mathcal{D}},b,c}$ be the subspace of $\tilde{\mathcal{D}}$ of compactly supported $u$ with $Au \in \tilde{\mathcal{D}}$ for $A$ elliptic over $supp (u)$. Then $H^{m}_{\tilde{\mathcal{D}},b,loc}$ is the subspace of $\tilde{\mathcal{D}}$ of $u$ such that for any $\phi \in \mathcal{C}_c(M)$, $\phi u \in H^{m}_{\tilde{\mathcal{D}},b,c}$.

For $m<0$, let $H^{m}_{\tilde{\mathcal{D}},b,c}$ are $u\in\mathcal{C}^{-\infty}(M)$ of the form $u=u_1+Au_2$ with $u_1,u_2\in\tilde{\mathcal{D}}_{loc}$ and $A\in\Psi_b^{-m}(M)$. The norm is:
\be
\|u\|_{H^{m}_{\tilde{\mathcal{D}},b,c}}=\inf\left\{\|u_1\|_{\tilde{\mathcal{D}}}+\|u_2\|_{\tilde{\mathcal{D}}}:u=u_1+Au_2\right\}
\ee
with $H^{m}_{\tilde{\mathcal{D}},b,loc}$ the space of $u \in \mathcal{C}^{-\infty}(M)$ such that $\phi u \in H^{m}_{\tilde{\mathcal{D}},b,c}$ for $\phi \in \mathcal{C}^{\infty}_c(M)$.
\end{defi}

Note that these definitions are independent of the $A$ chosen. We also require wavefront set with respect to these Sobolev spaces:
\begin{defi}
Let $u\in H^{m}_{\tilde{\mathcal{D}},b,loc}$ for some $s$. For $q \in ^bT^*M / \left\{0\right\}$, $q\notin \emph{WF}^m_{b,\tilde{\mathcal{D}}(u)}$ if there exists an $A \in \Psi^m_b(M)$ such that $A$ is elliptic near $q$ and $Au\in \tilde{\mathcal{D}}$.
\end{defi}

\begin{lem}
$\tilde{\mathcal{D}} = rH^1_b(M).$ 
\end{lem}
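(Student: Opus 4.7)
The plan is to establish both inclusions by reducing the defining norm of $\tilde{\mathcal{D}}$ to the $H^1_b(M)$ norm via the substitution $u = rv$, and then comparing term by term.

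First I would invoke Lemma \ref{domain_lemma} to replace the form-based norm on $\tilde{\mathcal{D}}$ with the equivalent one
\[
\|u\|^2_{\tilde{\mathcal{D}}} \;\sim\; \|D_r u\|^2_{L^2} + \Big\|\tfrac{1}{r}D_\theta u\Big\|^2_{L^2} + \Big\|\tfrac{1}{r}u\Big\|^2_{L^2} + \|D_t u\|^2_{L^2},
\]
so the potential term and Friedrichs-form niceties are no longer in play. This is the right target norm because its four ingredients are exactly the pullbacks (up to powers of $r$ and the change from Euclidean density $r^{n-1}\,dr\,d\sigma$ to the b-density $\frac{dr}{r}d\sigma$) of the defining seminorms of $H^1_b(M)$, once one strips off an overall factor of $r$.

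Next, I would carry out the substitution $u = rv$ and use Leibniz: $D_r u = -iv - i r \partial_r v$, $D_\theta u = r D_\theta v$, $D_t u = r D_t v$, and $u/r = v$. Grouping these, the first term gives $D_r u = r^{-1}\bigl(-iu - i(r\partial_r v)\cdot r\bigr)$-type expressions that, after accounting for the Jacobian $r^{n-1}$ and absorbing the appropriate $r^{n/2}$ weight into $v$, show that $u \in \tilde{\mathcal{D}}$ is equivalent to $v$ and its b-derivatives $r\partial_r v$, $\partial_\theta v$, $\partial_t v$ all lying in $L^2_b(M)$; that is, $v \in H^1_b(M)$. For the forward inclusion $\tilde{\mathcal{D}}\subseteq rH^1_b$, one defines $v = u/r$, which is controlled in $L^2_b$ by $\|u/r\|_{L^2}$, and then the Leibniz identities together with the bounds on $D_r u$, $(1/r)D_\theta u$, $D_t u$ give the remaining b-derivatives in $L^2_b$. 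For the reverse inclusion $rH^1_b\subseteq\tilde{\mathcal{D}}$, take $v \in H^1_b$, set $u = rv$, and just read the same identities the other way; no cancellation is required because each b-derivative of $v$ produces a manifestly $L^2$ quantity after multiplication by $r$ (resp. division by $r$).

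The main obstacle is bookkeeping the weights at $r=0$: the Euclidean density and the b-density differ by a factor of $r^n$, and the $r$ in $rH^1_b$ has to exactly compensate both this density shift and the fact that $1/r\cdot u$ is required in $L^2$, not $u$ itself. This is where Hardy's inequality (already used in Lemma \ref{domain_lemma}) is essential: it guarantees that the apparently singular term $\|u/r\|_{L^2}$ is genuinely controlled by the derivative terms, so the substitution $u = rv$ does not lose any regularity and $v$ is honestly in $L^2_b$ rather than only in some weighted space. Finally, density of $\dot{\mathcal{C}}^\infty(M)$ in both $\tilde{\mathcal{D}}$ (by definition) and in $rH^1_b(M)$ (standard for b-Sobolev spaces) lets the norm-equivalence established on smooth compactly supported functions extend to the full spaces, yielding the asserted identification.
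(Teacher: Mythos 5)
Your argument is essentially the paper's own proof: the paper simply says to unpack the definition of the $rH^1_b(M)$ norm and invoke the norm-equivalence lemma, and your substitution $u=rv$ with the Leibniz identities, Hardy's inequality for the $\|u/r\|_{L^2}$ term, and density of $\dot{\mathcal{C}}^\infty(M)$ is exactly that computation carried out explicitly. You additionally make the density/weight bookkeeping (Euclidean $r^{n-1}\,dr\,d\sigma$ versus the b-density $\frac{dr}{r}\,d\sigma$) visible, which the paper leaves implicit, so the proposal is correct and follows the same route.
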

\begin{proof} Use the definition of the norm of $rH^1_b(M)$ and the norm equivalence lemma. \end{proof}

\begin{cor}
Operators in $\Psi_b^0$ are bounded on $\tilde{\mathcal{D}}$ and on $H^m_{b,\tilde{\mathcal{D}}}$ for all $m$.
\end{cor}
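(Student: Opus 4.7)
The plan is to leverage the identification $\tilde{\mathcal{D}} = rH^1_b(M)$ established in the preceding lemma, combined with the fact that conjugation by the boundary defining function $r$ preserves the zeroth-order b-calculus. Property \textbf{(VI)} already gives $L^2_b$-boundedness for elements of $\Psi_b^{0,0}$, and this extends to every $H^m_b(M)$ by the standard commutator/parametrix arguments inherent in the b-calculus. The task, therefore, reduces to transferring these mapping properties through the weight $r$.

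For boundedness on $\tilde{\mathcal{D}}=rH^1_b(M)$, I would take $A\in\Psi_b^0$ and $u=rv$ with $v\in H^1_b(M)$, and write
\[
Au \;=\; A(rv) \;=\; r\cdot(r^{-1}Ar)\,v.
\]
The conjugate $A':=r^{-1}Ar$ lies in $\Psi_b^{0,0}(M)$ by property \textbf{(II)} together with the closure of $\Psi_b^{0,0}$ under left and right multiplication by $r^{\pm 1}$, and the standard b-calculus gives $A':H^1_b(M)\to H^1_b(M)$ boundedly. Hence $A'v\in H^1_b(M)$, so $Au=r(A'v)\in rH^1_b(M)=\tilde{\mathcal{D}}$, with the expected norm estimate.

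For the Sobolev scale $H^m_{\tilde{\mathcal{D}},b}$ I would treat $m\ge 0$ and $m<0$ separately using the definitions. When $m\ge 0$, pick $B\in\Psi_b^m$ elliptic over $\mathrm{supp}(u)$ with $Bu\in\tilde{\mathcal{D}}$, let $A\in\Psi_b^0$ be properly supported, and expand
\[
B(Au) \;=\; A(Bu) + [B,A]u.
\]
The first term lies in $\tilde{\mathcal{D}}$ by the previous paragraph, while $[B,A]\in\Psi_b^{m-1}$; using a parametrix $Q\in\Psi_b^{-m}$ of $B$ to write $u=QBu+Ru$ with $R$ smoothing, the commutator piece rearranges into an $\tilde{\mathcal{D}}$-bounded contribution by induction on $m$. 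For $m<0$, the defining decomposition $u=u_1+Bu_2$ with $u_j\in\tilde{\mathcal{D}}_{\mathrm{loc}}$ and $B\in\Psi_b^{-m}$ gives
\[
Au \;=\; Au_1 + B(Au_2) + [A,B]u_2,
\]
and again the parametrix of $B$ absorbs the commutator term back into the $u_1+Bu_2$ form with $\tilde{\mathcal{D}}$-controlled pieces, yielding the required norm estimate directly from the infimum in the definition.

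The main obstacle is the bookkeeping for these commutator and parametrix contributions: one must check that all residual terms really lie in the precise form required by the definition of $H^m_{\tilde{\mathcal{D}},b}$, not merely in some nearby space. The cleanest route, which I would actually prefer to execute, is to first upgrade the domain lemma to the $m$-shifted identification $H^m_{\tilde{\mathcal{D}},b,\mathrm{loc}}=rH^{m+1}_{b,\mathrm{loc}}(M)$, obtained by combining elliptic regularity at the level of the principal symbol with the Hardy-based norm equivalence already proved. Granted that identification, the entire corollary collapses to the single conjugation identity $Au=r(r^{-1}Ar)v$ together with the known boundedness of $\Psi_b^0$ on each $H^{m+1}_b(M)$, and no separate argument for positive versus negative orders is needed.
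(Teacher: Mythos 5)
Your argument is correct, and it is somewhat more explicit than the paper's own proof, which is a single line: invoke the standard square-root (H\"ormander-type) argument with respect to the Sobolev-domain norm and then use the preceding lemma identifying $\tilde{\mathcal{D}}$ with $rH^1_b(M)$. Where the paper runs the positivity/square-root argument directly in the (equivalent) domain norm to get boundedness, you instead reduce the weighted problem to the unweighted one by conjugation, writing $Au = r\,(r^{-1}Ar)v$ and using that $r^{-1}Ar\in\Psi^{0,0}_b(M)$ together with the axiomatic $L^2_b$-boundedness (property (VI)) and the mapping property of $\Psi^{0,0}_b$ on the b-Sobolev scale (property (IV)); both routes hinge on the same lemma $\tilde{\mathcal{D}}=rH^1_b(M)$, so the essential content is shared. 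Your conjugation route is more modular: it never re-runs the square-root construction, and it makes transparent exactly where the weight $r$ enters, at the cost of the commutator/parametrix bookkeeping you flag for the spaces $H^m_{\tilde{\mathcal{D}},b}$ (especially the $m<0$ case, where one must check the residual terms land back in the form $u_1+Bu_2$ demanded by the definition). Your preferred shortcut, upgrading the domain lemma to the shifted identification $H^m_{\tilde{\mathcal{D}},b,\mathrm{loc}}=rH^{m+1}_{b,\mathrm{loc}}(M)$ and then conjugating once, is consistent with how the paper implicitly treats these spaces and would indeed collapse both halves of the corollary into one computation; just be aware that this identification is itself not proved in the paper, so if you take that route you owe the elliptic-regularity argument for it, whereas the direct commutator version only uses properties already listed.
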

\begin{proof}
Use the standard square root argument with respect to the Sobolev domain and note that this is equivalent to our new domain.
\end{proof}

\subsection{Second Formulation}

We can now reformulate the theorem on the blownup space in terms of b-wavefront set.

The co-sphere bundle is a more appropriate place to state theorems about the propagation of $\emph{WF}(u)$ since they are already conic by definition and so propagation in radial directions is hence only a matter of presence at all in a cone. To get an appropriate set of coordinates on the co-sphere bundle we pick $\tau$ which is a degree 1 homogeneous function that does not vanish on our characteristic set, so we can renormalize fibers using it by letting $\hat{\xi}:= \frac{\xi}{\tau}, \hat{\zeta}:=\frac{\zeta}{\tau}, \hat{\tau}:=\pm 1$ although only the first two variables will be used.

The notion of \textbf{b-elliptic regularity} gives us that the solution to our equation is regular outside the characteristic set. This restriction corresponds to the restriction to unit speed geodesics from our original formulation. Note that $\square := D_t^* D_t - (D_r^* D_r - \frac{1}{r^2} \Delta_{\theta} + \frac{f(r,\theta)}{r^2})$, where $D_r^* = D_r + \frac{n-1}{r}$ has a b-principal symbol of $\tau^2 - \frac{\xi^2}{r^2} - \frac{|\zeta|^2}{r^2}$. Multiplying by $r^2$ and dividing by $\tau$ gives the following characteristic set:
\bea
\Sigma &:=& \{(r,z_i,t,\xi,\zeta_i, \tau)|\sigma_{2}(r^2\Box+f(r,\theta))=0\} \\
&=&\{(r, z_i, t, \xi, \zeta, \tau) | r^2-\hat{\xi}^2-|\hat{\zeta}|_k^2=0\}
\eea
where $k$ denotes the length induced by the metric on the base $\mathbb{S}^{n-1}$, so our solutions have $\emph{WF}_{b,\tilde{\mathcal{D}}}(u)\in \Sigma$. On the interior of our blowup (away from the new boundary), we have $\emph{WF}_b(u)\cap ^bT^*_{X^o}X = \pi(\emph{WF}_{b,\tilde{\mathcal{D}}}(u)\cap T^*_{X^o}X)$ where $\pi$ is the blowdown map, so b-regularity translates into standard $C^\infty$ regularity and vice-versa. The Duistermaat-H\"ormander theorem gives that $\emph{WF}_{b,\tilde{\mathcal{D}}}(u)$ is the usual maximally extended family of bicharacteristics inside $\Sigma$ on the interior of our blown up $\mathbb{R}^n$.

It is useful to know an exact for for the Hamilton vector field here.

\begin{lem}\cite{melrose_wunsch_vasy}\label{commutator_lemma1}If $A\in \Psi_b^m(M)$ then the Hamilton vector field $H_a$ of $a = \sigma_{b,m}(A)$ defined initially on the interior of the blown up cotangent bundle $T^*M^{\circ}$ extends to the boundary as an element of $\nu_b(^bT^*M)$ in coordinates $(x,t,z,\xi,\tau,\zeta)$ as:
\be
H_a=(\partial_{\xi}a)r\partial_r+(\partial_{\tau} a)\partial_t+(\partial_{\zeta_i}a)\partial_{\theta_i}-(r\partial_r a)\partial_{\xi}-(\partial_t a)\partial_{\tau}-(\partial_{\theta_j} a)\partial_{\zeta_j}
\ee
\end{lem}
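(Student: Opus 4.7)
The plan is to derive the claimed formula by starting from the standard Hamilton vector field on the smooth interior $T^*M^\circ$ and then performing the change of variables into the b-coordinates on ${}^bT^*M$. On the interior, in the usual canonical coordinates $(r,\theta,t,\tilde{\xi},\zeta,\tau)$ with $\tilde{\xi}$ dual to $dr$, the Hamilton vector field of $a$ is the familiar
\be
H_a=(\partial_{\tilde{\xi}}a)\partial_r-(\partial_r a)\partial_{\tilde{\xi}}+(\partial_{\zeta_i}a)\partial_{\theta_i}-(\partial_{\theta_i}a)\partial_{\zeta_i}+(\partial_\tau a)\partial_t-(\partial_t a)\partial_\tau,
\ee
consistent with the Poisson bracket convention in property (V).

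Next, I would translate partial derivatives between the two coordinate systems using the identification $\xi=r\tilde{\xi}$, which comes from equating $\xi\,dr/r=\tilde{\xi}\,dr$. The chain rule gives $\partial_{\tilde{\xi}}|_r=r\,\partial_\xi|_r$ and $\partial_r|_{\tilde{\xi}}=\partial_r|_\xi+(\xi/r)\,\partial_\xi$. Substituting these into the first two summands of $H_a$ produces
\be
(r\partial_\xi a)\bigl(\partial_r|_\xi+(\xi/r)\partial_\xi\bigr)-\bigl(\partial_r a|_\xi+(\xi/r)\partial_\xi a\bigr)(r\partial_\xi),
\ee
and the two cross-terms $\xi(\partial_\xi a)\partial_\xi$ cancel with opposite signs. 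What remains is exactly $(\partial_\xi a)\,r\partial_r-(r\partial_r a)\,\partial_\xi$, matching the first and fourth terms of the claim. Since the remaining coordinates $(\theta_i,t,\zeta_i,\tau)$ are untouched by the blow-up, the other four summands of $H_a$ transfer verbatim.

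The final step is to verify that once assembled in this form, $H_a$ extends smoothly from the interior to the boundary as a b-vector field. Because $a$ is a b-symbol, each of $\partial_\xi a$, $r\partial_r a$, $\partial_{\theta_i}a$, $\partial_{\zeta_i}a$, $\partial_t a$, $\partial_\tau a$ is smooth on ${}^bT^*M$ up to and including the boundary $r=0$. Moreover, $r\partial_r$, $\partial_{\theta_i}$, $\partial_t$ are the local generators of b-vector fields on $M$, while $\partial_\xi$, $\partial_{\zeta_i}$, $\partial_\tau$ are fiber derivatives and hence tangent to the boundary of ${}^bT^*M$. Thus $H_a\in\mathcal{V}_b({}^bT^*M)$ as stated.

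The only real obstacle is the bookkeeping of which variable is held fixed in each partial derivative during the change of variables — once that is nailed down, the derivation is a single computation. As an alternative, one can compute $H_a$ directly from the b-symplectic form $\omega=d\alpha_b=d\xi\wedge\tfrac{dr}{r}+d\zeta_i\wedge d\theta^i+d\tau\wedge dt$ using $i_{H_a}\omega=-da$; inverting this linear system gives the stated expression in one step and makes the extension to the boundary automatic.
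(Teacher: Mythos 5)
Your proposal is correct and is essentially the paper's own (very terse) argument carried out in detail: the paper simply says to use the definition in coordinates together with the b-projection map, which is exactly your change of variables $\xi=r\tilde{\xi}$ from the canonical interior coordinates, with the cross-terms cancelling to give $(\partial_\xi a)\,r\partial_r-(r\partial_r a)\,\partial_\xi$ and smoothness up to $r=0$ following because $a$ is a b-symbol. Your closing remark that one can instead invert $i_{H_a}\omega=-da$ for the b-symplectic form $d\xi\wedge\frac{dr}{r}+d\zeta_i\wedge d\theta^i+d\tau\wedge dt$ is a clean equivalent formulation of the same computation.
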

\begin{proof} Use the definition in coordinates and the b-projection map. \end{proof}

So written as a b-operator, our Hamilton vector field is:
\be
H_{\sigma(\Box+\frac{f(r,\theta)}{r^2})}= 2\tau \partial_t - \frac{2\xi}{r^2} r\partial_r - \frac{k^{ij}\zeta_j}{r^2}\partial_{\theta_i} +\frac{\zeta_i \partial_{\theta_i}(k^{ij})\zeta_j}{r^2}\partial_{\zeta_i}
-\frac{2(\xi^2+|\zeta|^2)}{r^2}\partial_\xi 
\ee

Furthermore:

\begin{lem} Angular momentum $\zeta = 0$ for any bicharacteristics that strike the origin under this Hamilton vector field.
\end{lem}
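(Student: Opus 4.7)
The plan is to combine two conservation laws with the characteristic set equation. Because the principal symbol $a := \tau^2 - \xi^2/r^2 - k^{ij}(\theta)\zeta_i\zeta_j/r^2$ is independent of $t$, the frequency $\tau$ is conserved along the Hamilton flow. The crucial observation is that the squared angular momentum $|\zeta|^2_k := k^{ij}(\theta)\zeta_i\zeta_j$ is also conserved. Granting that, the characteristic equation $r^2\tau^2 = \xi^2 + |\zeta|^2_k$ forces $\xi^2 + |\zeta|^2_k = 0$ at any point with $r=0$; since $k$ is positive definite, $|\zeta|^2_k = 0$ there. By conservation, $|\zeta|^2_k \equiv 0$ along the entire bicharacteristic, and positive definiteness of $k$ then yields $\zeta \equiv 0$ as claimed.

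The heart of the argument is the verification that $H_a(|\zeta|^2_k) = 0$. I would compute this directly from the coordinate formula for $H_a$ in Lemma~\ref{commutator_lemma1}. Since $|\zeta|^2_k$ depends only on $(\theta,\zeta)$, only the $\partial_{\theta_i}$ and $\partial_{\zeta_j}$ components of $H_a$ contribute. Substituting $\partial_{\zeta_i}a = -2k^{ij}\zeta_j/r^2$ and $-\partial_{\theta_j}a = (\partial_{\theta_j}k^{i\ell})\zeta_i\zeta_\ell/r^2$, the action on $|\zeta|^2_k$ becomes a sum of two cubic-in-$\zeta$ expressions with a common $1/r^2$ prefactor that cancel after relabeling dummy indices. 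Conceptually, this is the statement that, once one extracts the scalar factor $1/r^2$, the $(\theta,\zeta)$-part of $H_a$ is exactly the Hamilton vector field of the spherical geodesic Hamiltonian $\tfrac{1}{2}k^{ij}\zeta_i\zeta_j$ on $T^*\mathbb{S}^{n-1}$, and any Hamiltonian is preserved by its own flow; equivalently, the angular projection of our flow is a positive reparametrization of the spherical geodesic flow, which preserves its energy.

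The main subtlety I anticipate is the behavior of the b-Hamilton flow at $r=0$: the coefficients of $H_a$ blow up like $1/r^2$, so the affine parameter tends to infinity as the boundary is approached and one cannot simply evaluate at $r=0$. However, conservation of $|\zeta|^2_k$ is a pointwise identity on $T^*M^{\circ}$, hence $|\zeta|^2_k$ takes a single constant value along the integral curve in $\{r>0\}$. Passing to the limit $r\searrow 0$ in the characteristic equation identifies this constant as zero, after which $\zeta \equiv 0$ on the whole bicharacteristic follows from positive definiteness of $k$.
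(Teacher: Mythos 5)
Your proof is correct, but it takes a different route from the paper's. The paper argues by explicitly integrating the flow: after reparametrizing to kill the $1/r^2$ factor, the $(r,\xi)$ subsystem becomes a Riccati-type equation whose solution is $\xi(s)=\pm\sqrt{|\zeta|_k^2}\,\tan\bigl(\sqrt{|\zeta|_k^2}\,(s-c_1)\bigr)$ when $|\zeta|_k^2>0$, whence $\log r(s)=\log\bigl|\sec\bigl(\sqrt{|\zeta|_k^2}\,(s-c_1)\bigr)\bigr|-c_2$, so $r\ge e^{-c_2}>0$ and the curve never reaches the boundary; the lemma follows by contraposition. You instead avoid solving any ODE: you verify directly from the coordinate form of $H_a$ (Lemma \ref{commutator_lemma1}) that $|\zeta|_k^2$ is conserved — the index-relabeling cancellation you describe is exactly right, since the angular part of $H_a$ is $1/r^2$ times (a multiple of) the spherical geodesic Hamilton field — note that $\tau$ is conserved, and then use the null condition $r^2\tau^2=\xi^2+|\zeta|_k^2$ on $\Sigma$ together with a limit $r\searrow 0$ along the curve to pin the constant value of $|\zeta|_k^2$ at zero. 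Two remarks on the comparison. First, the paper's explicit solution tacitly treats $|\zeta|_k^2$ as constant along the flow, which is precisely the conservation law you prove; your write-up therefore makes explicit a step the paper leaves unverified, which is a genuine improvement in rigor. Second, your argument uses that the bicharacteristic is null (lies in $\Sigma$), which is the only case needed for the propagation theorem, whereas the paper's ODE computation applies to arbitrary integral curves and yields a quantitative lower bound $r\ge e^{-c_2}$ (the turning-point picture); your limiting argument correctly sidesteps the parametrization issue at $r=0$, so nothing is lost for the intended application.
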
 
\begin{proof}
To see this, lets analyze the flow for:
\be
H_{\sigma(\Box+\frac{f(r,\theta)}{r^2})}= 2\tau \partial_t - \frac{2\xi}{r^2} r\partial_r 
-\frac{\kappa^{ij}\zeta_j}{r^2}\partial_{\theta_i} +\frac{\zeta_i \partial_{\theta_i}(k^{ij})\zeta_j}{r^2}\partial_{\zeta_i}
-\frac{2(\xi^2+|\zeta|_k^2)}{r^2}\partial_\xi
\ee
Call the propagating variable $s$ and $'$ denote push-forward by $\partial_s$. Our system is:
\be
\left\{ \begin{array}{rcl}
t'&=&\tau \\
r'&=&-\frac{\xi}{r}\\
\theta_i'&=&\frac{\kappa^{ij}\zeta_j}{2r^2}\\
\tau'&=& 0\\
\xi'&=&-\frac{(\xi^2+|\zeta|^2)}{r^2}\\
\zeta_j'&=&\frac{\zeta_i \partial_{\theta_i}(k^{ij})\zeta_j}{r^2}
\end{array}\right.
\ee
We reparametrize flow with respect to the a parameter such that $ds = r^2 dr$, and rescaled this becomes:
\be
\left\{ \begin{array}{rcl}
t'&=&r^2\tau \\
r'&=&-r\xi\\
\theta_i'&=&-\frac{\kappa^{ij}\zeta_j}{2}\\
\tau'&=& 0\\
\xi'&=&-\xi^2+|\zeta|^2\\
\zeta_j'&=&\zeta_i \partial_{\theta_i}(k^{ij})\zeta_j
\end{array}\right.
\ee

When $|\zeta|_k^2>0$, the coupled system in $r$ and $\xi$ produce:
\begin{equation}
\left\{\begin{array}{rcl}
\log(r(s)) &=& \int \xi(s) ds\\
\xi(s) &=& \sqrt{|\zeta|_k^2} \tan(\sqrt{|\zeta|_k^2}(s-c_1))\\
\end{array}\right.
\end{equation}
which means
\begin{equation}
\log(r(s)) = \log(|\sec(\sqrt{|\zeta|_k^2}(s-c_1))|) - c_2\\
\end{equation}
Since $\sec$ is never 0, $r\neq 0$ along this flow.
\end{proof}

There is a radial point at $r=0, \zeta=0$ for where null-bicharacteristics strike the origin. This prevents us from using a diffeomorphism which orients the vector field in some canonical direction and propagating through the origin as in the standard proof of Duistermaa-H\"ormander.

We pick $-\hat{\xi} = -\frac{\xi}{\tau}$ as our propagating variable, and note that $H_{\sigma(\Box+\frac{f(r,\theta)}{r^2})}(-\hat{\xi})$ is decreasing. This fact is crucial to constructing an appropriate commutant for our commutator argument, and it corresponds to the size of $\frac{2(\xi^2+|\zeta|^2)}{r^2}\partial_\xi$ relative to the other terms. By picking the support of the commutant appropriately, we can show that this is the dominant term in our positive commutator argument. Also note that we're in the characteristic set along bicharacteristics with no angular momentum so $r^2 = \hat{\xi}^2$. Notice that $\hat{\xi}$ vanishes as we approach the origin instead of flipping and as we change sign, it accounts for outgoing bicharacteristics with the opposite momentum, and the absence of wavefront set where $\hat{\xi} > 0$ will imply that there are no singularities on outgoing null-bicharacteristics. Restated again, our theorem becomes:
\begin{thm}\label{mainpropagation2} Let $u$ in our domain be a solution of $\left(\Box + \frac{f(r,\theta)}{r^2}\right)u = 0$ and over $q_0=(t_0,\tau_0)$ let $U$ be a neighborhood in $S^*_b(\mathbb{R}\times[\mathbb{R}^n,0])$ around the set $Q = \{r=0, t=t_0, \tau_0=\pm 1, \hat{\xi}= 0, \theta \in \mathbb{S}^{n-1}, \hat{\zeta}=0\}$. Let $\tilde{U} := U \cap \Sigma$, $\Sigma$ the characteristic set defined before. In the region where $\tau > 0$:
\be
\tilde{U}\cap\{r > 0, -\hat{\xi} < 0\}\cap \emph{WF}(u) = \emptyset \Longrightarrow Q \cap \emph{WF}_{b,\tilde{\mathcal{D}}} (u) = \emptyset.
\ee
\end{thm}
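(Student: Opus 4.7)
The plan is to prove Theorem \ref{mainpropagation2} by a positive commutator argument in the b-calculus, adapted to the radial point structure at $Q$. Setting $P = r^2\square + f \in \Psi_b^{2,0}(M)$, we have $Pu=0$ and $\sigma_{b,2}(P)=r^2\tau^2-\xi^2-|\zeta|_k^2$, so b-elliptic regularity immediately restricts $\emph{WF}_{b,\tilde{\mathcal{D}}}(u)$ to $\Sigma$. Since by Lemma \ref{commutator_lemma1} and the Hamilton field expression given in the paper, $H_p$ vanishes identically on $Q$ (this is exactly the radial-point phenomenon the paper flags), one cannot propagate regularity by a change of variables straightening out the flow. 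Instead, propagation at $Q$ must come from the linearization of $H_p$ transverse to $Q$, together with the weight of a carefully chosen commutant.

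Concretely, I would construct a self-adjoint commutant $A = A^* \in \Psi_b^{m-1/2,-\ell-1/2}(M)$ with principal symbol of the form
\[
a \;=\; \tau^{\,m-1/2}\, r^{-\ell-1/2}\,\chi(r)\,\psi(-\hat\xi)\,\phi\bigl(|\hat\zeta|_k^{\,2}\bigr)\,\upsilon(t-t_0),
\]
where: $\chi$ is a cutoff to a small neighborhood of $r=0$; $\upsilon$ localizes to $t$ near $t_0$; $\phi$ localizes in $|\hat\zeta|$ near $0$ (justified by the angular-momentum lemma, which forces $\hat\zeta=0$ on any bicharacteristic reaching $Q$); and $\psi$ is monotone with $\psi' \ge 0$ supported strictly in the incoming wedge $\{-\hat\xi<0\}$. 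Computing $H_p(a^2)$ then decomposes into three pieces: a \emph{threshold term} arising from $H_p$ acting on the weights $\tau^{2m-1}r^{-2\ell-1}$, which near $Q$ gives a sign-definite multiple of $a^2$ (its sign is controlled by $\ell$, using that on $\Sigma$ one has $r^2=\hat\xi^2+|\hat\zeta|_k^2$); a \emph{main propagation term} proportional to $\psi'(-\hat\xi)\cdot H_p(-\hat\xi)$, which is supported in $\{-\hat\xi<0,\,r>0\}$; and \emph{boundary error terms} from $\chi'$ and $\phi'$, arranged to be supported either outside $\Sigma$ or in the incoming wedge. Using $Pu=0$, the identity $\langle i[P,A^*A]u,u\rangle = 0$ converts the threshold-plus-main-propagation contribution into an equality with the error contributions; invoking the hypothesis to control $\psi'$-errors, invoking b-elliptic regularity on non-characteristic errors, and using Proposition \ref{equiv_of_norms} together with Hardy's inequality to convert the threshold term into a genuine $\|Au\|_{\tilde{\mathcal D}}^2$ bound, yields $Au\in\tilde{\mathcal{D}}$.

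To legitimize the pairings when $u$ has only finite b-Sobolev regularity, I would introduce a standard regularizer family $A_\varepsilon \to A$ which is uniformly bounded in the target order but lives one step lower for $\varepsilon>0$, derive the positive commutator estimate $\|A_\varepsilon u\|_{\tilde{\mathcal D}} \le C$ independent of $\varepsilon$, and pass to the limit to obtain $Au\in\tilde{\mathcal D}$, hence $Q \cap \emph{WF}_{b,\tilde{\mathcal D}}(u)=\emptyset$; a standard inductive step in $m$ upgrades this to all orders. The main obstacle will be the threshold step: because $Q$ is radial, the weight exponent $\ell$ must satisfy a sharp inequality so that the threshold term dominates the other subprincipal contributions, and simultaneously $\operatorname{supp}\psi'$ must be pushed into the hypothesized regular wedge without ever crossing into $\{\hat\xi>0\}$. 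The condition $f > -\lambda(n)^2$ in the theorem enters precisely here, through Proposition \ref{equiv_of_norms}, to ensure that the threshold-term positivity transfers from the b-Sobolev pairing to a genuine $\tilde{\mathcal D}$-estimate; the remainder of the argument closely parallels the edge-manifold propagation theorem of \cite{melrose_wunsch_vasy}.
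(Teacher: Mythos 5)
Your overall framework (b-calculus positive commutator at the radial set, regularizer, induction in order) matches the paper's, but the central positivity mechanism in your commutant fails at $Q$. You propose that the sign-definite term at $Q$ come from the ``threshold'' contribution of $H_p$ on the weights $\tau^{m-1/2}r^{-\ell-1/2}$. By Lemma \ref{commutator_lemma1}, the $r\partial_r$-component of $H_{\sigma(\square+f/r^2)}$ has coefficient $-\frac{2\xi}{r^2}$, so $H_p\bigl(r^{-2\ell-1}\bigr)=(2\ell+1)\frac{2\xi}{r^2}\,r^{-2\ell-1}$, i.e.\ it is proportional to $\hat{\xi}$, while $H_p(\tau)=0$ because the symbol is $t$-independent. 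Hence your threshold term vanishes on $Q$ (where $\hat{\xi}=0$) and changes sign across $\hat{\xi}=0$; it is not a sign-definite multiple of $a^2$ near $Q$, and no choice of $\ell$ repairs this (the relation $r^2=\hat{\xi}^2+|\hat{\zeta}|_k^2$ on $\Sigma$ controls magnitudes, not the sign of $\hat{\xi}$). Meanwhile your monotone factor $\psi(-\hat{\xi})$ has $\psi'$ supported strictly in $\{-\hat{\xi}<0\}$, so that term lives entirely in the hypothesis region and detects nothing at $Q$ either: the estimate degenerates exactly at the set you need to control. The point is that $Q$ is not a scattering-type radial set where base weights generate positivity; rather, the $\partial_\xi$-component of $H_p$, namely $-\frac{2(\xi^2+|\zeta|^2)}{r^2}=-2\tau^2$ on $\Sigma$, is elliptic at $Q$ when $\tau>0$, and the paper's commutant exploits precisely this: the unweighted factor $e^{C\hat{\xi}}$ differentiates against that elliptic coefficient to give the main term $b^2$, elliptic at $r=0$, which detects the extra $1/2$ order, while the $\hat{\xi}$-cutoff produces one term in the hypothesis region ($E_1$) and one good-sign term, and the $r$- and $t$-localizations are tapered parabolically in $\hat{\xi}$ (via $-r^2+\alpha\hat{\xi}+2\delta$ and $-(t-t_0)^2+\alpha\hat{\xi}+2\delta$) so that for large $\alpha$ all their derivative terms also carry the good sign. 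Your independent cutoffs $\chi(r)\,\upsilon(t-t_0)$ would instead generate sign-indefinite terms supported at small $r$ and $t\neq t_0$, where neither the wavefront hypothesis nor ellipticity is available.

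Separately, even with a corrected symbol, the step you compress into ``Hardy plus Proposition \ref{equiv_of_norms}'' is the technical core of the paper's proof: a purely principal-symbol argument leaves remainders only in $\frac{1}{r^2}\Psi_b^{-1}(M)$, which are \emph{not} bounded relative to $\tilde{\mathcal{D}}$, so the paper must factor out $D_t$, $D_r$, and $\frac{1}{r}D_{\theta_i}$ at every iterative step (using the weightless commutator lemma \ref{commutator_lemma2} for $D_r$, basicness of the symbol for the $D_{\theta_i}$ commutators, and the improved commutator with $\frac{1}{r^2}$), exchange $D_r$ and $\frac{1}{r}D_\theta$ norms for $D_t$ norms via Lemma \ref{swap_lemma}, and absorb the potential term through Hardy's inequality. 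Introducing an additional $r^{-\ell-1/2}$ weight in the commutant, as you propose, makes this weight bookkeeping strictly worse rather than better. Both the sign mechanism at $Q$ and the remainder/domain control therefore need to be redone along the paper's lines.
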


\subsection{Proof of Second Diffractive Theorem}
\subsubsection{overview}
We will inductively prove the reformulated theorem above inductively using the relative wavefront set. Because we are assuming all our solutions belong to $\tilde{\mathcal{D}}$, they must be in some Sobolev space to begin with. Assuming (without loss of generality) that $Q\cap \emph{WF}^0_b(u)=\emptyset$, we want to show that $Q\cap \emph{WF}^{1/2}_b(u)=\emptyset$. This would then guarantee inclusion in all orders of Sobolev regularity, showing the propagation of $\mathcal{C}^\infty$ regularity.

\begin{lem}\label{existence_of_commutant}
There exists $A\in \Psi^0_b(M)$ such that:
\bea
i\langle[\Box+\frac{f(r,\theta)}{r^2},A^*A]u,u\rangle = -\langle B^*Bu,u\rangle - \sum_j\langle G_j^*G_ju,u\rangle\\ + \langle E_1u,u\rangle + \langle E_2u,u\rangle + \langle Ru,u \rangle
\eea
In particular, $\Vert Bu\Vert$ detects order $1/2$ wavefront set around $Q$ and is bounded since every other term can be shown to be bounded. Therefore, we gain an additional $1/2$ order of regularity around $Q$. Note that the norm used here is relative to the domain $\tilde{mathcal{D}}$ defined earlier.
\end{lem}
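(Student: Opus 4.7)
The plan is to adapt the positive commutator technique of Melrose--Wunsch--Vasy to the radial point of $H_p$ lying over $r=0,\ \hat{\xi}=0,\ \hat{\zeta}=0$, where $p=\sigma_b(\Box+f/r^2) = \tau^2 - (\xi^2+|\zeta|_k^2)/r^2$ viewed as the principal symbol of an operator in $\Psi_b^{2,-2}(M)$. I would take the commutant $A\in\Psi_b^{0,-1/2}(M)$ with principal symbol
\begin{equation}
a = r^{-1/2}\, \chi_0(-\hat{\xi})\, \chi_1(\phi)\, \chi_2(p/\tau^2),
\end{equation}
where $\chi_0$ is a bump equal to $1$ near $0$, supported in $(-\delta,\delta)$, and chosen so that $\chi_0\chi_0'\leq 0$ for $-\hat{\xi}\geq 0$; $\phi$ is a nonnegative cutoff measuring squared distance to $Q$ in $(r,t-t_0,\theta,\tau-\tau_0,\hat{\zeta})$; and $\chi_2$ localizes near the characteristic set. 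The weight $r^{-1/2}$ is tuned so that an $L^2_b$-bound on $Bu$ translates through the definition of $H^{m}_{\tilde{\mathcal{D}},b,\mathrm{loc}}$ into a gain of $1/2$ derivative at $Q$, matching the inductive step from $\emph{WF}^0_b$ to $\emph{WF}^{1/2}_b$.

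Next, by property (V) of the b-calculus, $\sigma_b(i[P,A^*A]) = H_p(a^2)$, and the Leibniz expansion using Lemma \ref{commutator_lemma1} produces three qualitatively different pieces. The dominant contribution comes from $\chi_0'(-\hat{\xi})\cdot H_p(-\hat{\xi})$: since $H_p(-\hat{\xi}) = 2(\xi^2+|\zeta|_k^2)/(\tau r^2)>0$ on $\Sigma\cap\{\tau>0\}$, the sign condition on $\chi_0\chi_0'$ makes this a negative-definite multiple of a symbol elliptic at $Q$, producing the principal part of $-B^*B$. The weight derivative $H_p(r^{-1}) = 2\xi r^{-3}$, combined with the angular terms $\sim|\zeta_j|^2/r^2$ arising from the other Hamilton components, can be rearranged into a sum of squares $-\sum_j g_j^2$ with each $g_j$ carrying a factor of $\zeta_j/r$. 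Derivatives falling on $\chi_0$ in the region $-\hat{\xi}<0$, or on $\chi_1$ or $\chi_2$, produce $E_1$, which is supported either inside the a priori wavefront-free incoming cone (by hypothesis) or off the characteristic set (where b-elliptic regularity applies). The commutator $[f/r^2,A^*A]$ yields an element of $\Psi_b^{1,-4}$ whose symbol is $H_{a^2}(f/r^2)$; this becomes $E_2$, bounded by $\|f\|_{\infty,\mathrm{loc}}$ together with the inductive assumption $Q\cap\emph{WF}^0_b(u)=\emptyset$. Finally $R\in\Psi_b^{-\infty,\cdot}$ absorbs the sub-principal residuals from property (IV). Pairing against $u$, using $Pu = 0$ to transfer $P$ across $A^*A$ in the integrated commutator, and performing the standard $\varepsilon$-regularization $A\rightsquigarrow A\Lambda_\varepsilon$ with $\Lambda_\varepsilon\in\Psi_b^{-1}$ uniformly bounded and tending to the identity, justifies the asserted identity at the operator level.

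The main obstacle is twofold. First, at a radial point the usual Duistermaat--H\"ormander propagation is unavailable: the base components of $H_p$ degenerate, so there is no transverse direction along which to push smoothness, and one must lever the monotonicity of $-\hat{\xi}$ --- guaranteed by the ODE analysis in the previous subsection --- against the weight $r^{-1/2}$ so that all error terms can be absorbed. This dictates the precise tuning of both the weight and the support of $\chi_0$. Second, the b-principal symbol homomorphism of property (V) is only exact modulo one lower differential order while retaining the weight, so the naive Leibniz computation for $H_p(a^2)$ must be iteratively refined; this is the origin of $E_2$, and it is precisely here that the hypothesis $f>-\lambda(n)^2$ enters, since via Hardy's inequality and Proposition \ref{equiv_of_norms} it ensures that $|\langle E_2 u,u\rangle|\leq C\|u\|_{\tilde{\mathcal{D}}}^2$ without any negative mass escaping the absorption.
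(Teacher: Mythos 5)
There is a genuine gap, and it sits at the heart of the lemma. You take $\chi_0(-\hat{\xi})$ to be a bump equal to $1$ near $0$ and then extract the main term from $\chi_0\chi_0'\cdot H_p(-\hat{\xi})$. But $\chi_0'$ vanishes identically in a neighborhood of $\hat{\xi}=0$, i.e.\ in a neighborhood of $Q$ itself, so this term is supported away from $Q$ and cannot be elliptic there: it cannot play the role of $-B^*B$ detecting wavefront set at $Q$. In the paper that derivative is precisely \emph{not} the main term; the two halves of $\chi'_{\hat{\xi}}$ become the good-sign term $G$ and the wavefront-hypothesis term $E_1$. The ellipticity at $Q$ comes instead from a factor nonvanishing at $Q$ and monotone along the flow, $e^{C\hat{\xi}}$, whose derivative produces $b^2\sim \tau\,\frac{\xi^2+|\zeta|^2}{\tau^2 r^2}\,(\text{cutoffs})^2$, elliptic at $Q$ since $\frac{\xi^2+|\zeta|^2}{r^2}=\tau^2$ on $\Sigma$. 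Your only remaining candidate, the derivative of the weight, is $H_p(r^{-1})=2\hat{\xi}\tau r^{-3}$, which vanishes at $\hat{\xi}=0$ and changes sign across it, so it neither supplies ellipticity at $Q$ nor has a definite sign. Relatedly, the $1/2$ gain cannot come from an $r^{-1/2}$ weight: the wavefront set being improved is b-\emph{differential} order relative to $\tilde{\mathcal{D}}$, and the paper obtains the half order through the factor $\sqrt{\tau}$ in $b$ (so $B\in\Psi_b^{1/2}$, elliptic at $Q$ because $\tau\neq 0$ there), with the commutant $A$ kept weightless; an $r$-weight shifts decay, not Sobolev order, and makes the remainder problem below strictly worse.

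The second issue is that you dismiss the remainders too quickly. The sub-principal residuals are not in $\Psi_b^{-\infty}$ (the symbol sequence is exact only modulo one order, retaining the weight), and the actual obstruction the paper has to overcome is that the naive error terms lie in $\tfrac{1}{r^2}\Psi_b^{-1}$, which is \emph{not} bounded by the domain norm; nothing in your sketch addresses this. The paper's resolution is structural: it expands the commutator by Leibniz so as to explicitly factor out the domain operators $D_t$, $D_r$, $\tfrac1r D_{\theta_i}$ at each step, uses the improved commutator lemmas (Lemmas \ref{commutator_lemma2}, \ref{commutator_lemma3} and the $[A,\tfrac{1}{r^2}]$ lemma) to keep the resulting $B_i,C_i$ weightless, trades $D_r$ for $D_t$ via Lemma \ref{swap_lemma}, and handles the potential commutator $\tfrac{1}{r^2}C_7$ with Hardy's inequality plus smallness of $r\partial_r f$ on the localized support so it can be absorbed into $B$ and $G$ without destroying signs. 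Your treatment of $[f/r^2,A^*A]$ as an "$E_2$" bounded by $\|f\|_\infty$ and the inductive hypothesis conflates it with the off-characteristic term (in the paper $E_2$ is the $\chi_\Sigma$-derivative term, bounded by elliptic regularity) and skips exactly the absorption argument where $f>-\lambda(n)^2$ and Hardy enter. As written, the proposal would not produce the asserted identity with a $B$ elliptic at $Q$ and a domain-bounded $R$, so the conclusion of the lemma does not follow.
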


On the left hand side: assuming there is sufficient regularity in $u$ for the left hand side to make sense, the left hand side will evaluate to $0$ by using the self adjointness of our operators and that $\left(\Box+\frac{f(r,\theta)}{r^2}\right)u=0$.

On the right hand side: we will construct this commutant using microlocal cutoffs, (i.e. one that cuts of in both spatial and phase variables). By its design, the commutant will expand into a number of terms on the right hand side. The first term $-\langle B^*Bu,u\rangle$ is the main term which detects $\emph{WF}^{1/2}_b(u)$ around the set $Q$; it is our goal to bound this main term and hence show an additional $1/2$ order of regularity. This can be done by showing that all the other terms are either bounded or positive with the same sign as our main term. The second term $- \sum_i\langle G_i^*G_iu,u\rangle$ is a term with the same fixed sign as the main term so it helps our estimate. This term will be adjustable in size from how we construct $A$, and we absorb lower order terms from differentiation with non-propagating variables in the Hamilton vector field into this term. The third term $\langle E_1u,u\rangle$ will be supported on the incoming co-tangent neighborhood and is bounded by our wavefront hypothesis $\tilde{U}\cap\{r > 0, -\hat{\xi} < 0\}\cap \emph{WF}(u) = \emptyset$. The fourth term $\langle E_1u,u\rangle$ is supported off of the characteristic set $\Sigma$ and is bounded by elliptic regularity. The final term $\langle Ru,u \rangle$ is meant to be lower order, and ultimately bounded by inductive hypothesis.

There will be technical caveats to consider regarding the final remainder term $\langle Ru,u \rangle$ which prevent us from working on the b-principal symbol level exclusively. However, suppressing these matters for the time being, we can provide a sketch of the lemma and construct $A$ on the level of its b-principal symbol.

First, let $ \chi (x)$ be a smooth bump function with smooth square-root such that:
\be
\chi(x)=\left\{ \begin{array}{rcl}
1 && [-1,1] \\
0&&(-\infty, -2]\cup[2, \infty)\\
\end{array}\right.
\ee
where we call $\chi'(x) := \phi_1^2(x) - \phi_2^2(x)$, each $\phi$ smooth, and $\tilde{\chi(x)}$ be a smoothed Heaviside function
\be
\tilde{\chi}(x)=\left\{ \begin{array}{rcl}
1 && (1,\infty] \\
0&&(-\infty, 0]\\
\end{array}\right.
\ee
for which $\chi'(x) := \phi_3^2(x)$, $\phi_3$ also smooth.

In selecting $\chi(x)$, we require its derivative be the pair of bump functions with opposite signs $\chi'(x) = \phi_1^2(x) - \phi_2^2(x)$, with $\phi_1(x)$ supported on $[-2,-1]$ and $\phi_2(x)$ supported on $[1,2]$. Similarly, let $\tilde{\chi}(x)$ and its derivative have a smooth square root supported on $[0,1]$

\begin{figure}[htp]
\begin{center}
\fbox{\parbox[c][4.6in][l]{3.6in} 
{\leftline{\includegraphics[scale=0.5]{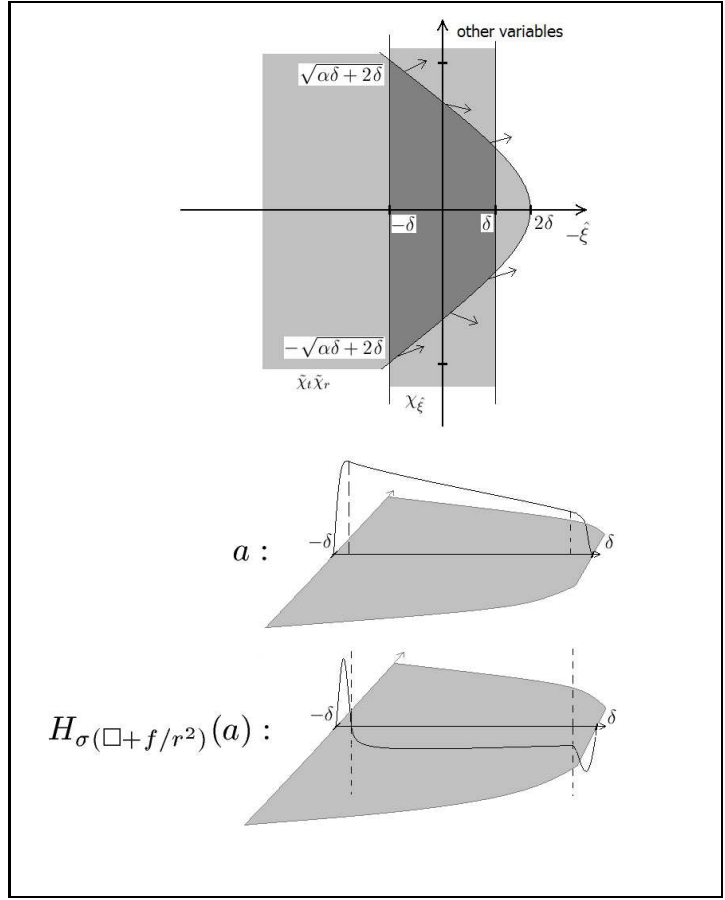}}}
}
\end{center}
\caption{\textbf{Top:} tapering support ensures differentiation against a falling edge. (See end of chapter note for more detail about behavior in the $r$ variable.) \textbf{Middle:} profile of commutant. \textbf{Bottom:} profile of commutant under differentiation by Hamilton vector field; the bump on the left is supported where we have wavefront assumption, the non-zero middle portion detects wavefront set at the point of interest, the bump on the right has the same sign due as the middle portion. The like signed bump extends along the curved edges of the support (not pictured here) thanks to differentiation occurring only against the falling edge of the support.}
\end{figure}

Taking the above remarks into account, we use the following construction for our commutant's b-principal symbol for some $\tau_0 > 0$ to ensure uniform boundedness of $\tau$ from 0:
\bea
a(r,z,t,\xi,\zeta,\tau) := e^{C\hat{\xi}}\chi\left(\frac{\hat{\xi}}{\delta}\right)\tilde{\chi}\left(-r^2+\alpha\hat{\xi}+2\delta\right)\\
\tilde{\chi}\left(-(t-t_0)^2+\alpha\hat{\xi}+2\delta\right)\tilde{\chi}(\tau-\tau_0)\chi\left(\frac{r^2-\hat{\xi}^2-|\hat{\zeta}|^2}{\delta}\right)
\eea
with A some constant. We abbreviate this by:
\be
a := e^{C\hat{\xi}}\chi_{\hat{\xi}}\tilde{\chi_{r}}\tilde{\chi_{t}}\tilde{\chi}_\tau\chi_{\Sigma}
\ee
This symbol will localize in any given co-sphere neighborhood of our set of interest. It is clear that $\hat{\xi}$ is localized by $\delta$. The variables $r$ and $t-t_0$ are localized by the interaction of their parabolic cutoffs with $\xi$'s vertical cutoffs. The maximum width of the parabola cut off by the vertical is at most $\sqrt{\alpha\delta+2\delta}$, which we can control for any given $\alpha$.

The first term, $e^{C\hat{\xi}}$ for $C>0$ a constant, decreases in $-\hat{\xi}$ to ensure a main term with a consistent negative sign under differentiation by $-\partial_\xi$ near our point of interest, and hence creates a nonzero symbol for detecting $\emph{WF}$ at our point of interest in our main term $B$. (Note that being in the characteristic set gives $\hat{\xi}^2+|\hat{\zeta}|_k^2=r^2$ so on bicharacteristics that strike the origin, we localize in $r$ as a result. )

The second term, $\chi_{\hat{\xi}}$, localizes in $\hat{\xi}$, and under differentiation we get a term with support in the wavefront hypothesis region (left vertical edge of diagram) as well as one with the same sign as our main term (right vertical edge of diagram). The former is the symbol for $E_1$ and the latter contributes to the $G$.

The terms, $\tilde{\chi}_{r}\tilde{\chi}_{t}$ cut off in the slow variables, and the terms $\alpha\hat{\xi}$ introduce a parabolic tapering with respect to propagation in $-\hat{\xi}$. This ensures that the Hamilton vector field points out of our support, and thus creates a bump function which is consistent in sign when hitting falling edge. These terms will make a small contribution to $G$, but will be dominated by the contribution from $-\hat{\xi}$.

The term $\tilde{\chi}(\tau)$ keeps $\tau > 0$. Finally, $\chi_{\Sigma}$, cuts off fiber variables to ensure that we have a symbol; the $\delta$ parameter is meant to adjust the support of $\chi$ so that it is in some small tubular neighborhood of the characteristic set. Since this term is constant in a neighborhood near the characteristic set, differentiating it yields bumps supported off of the characteristic set whose quantizations are bounded by elliptic regularity. This term also keeps $r>0$ when $-\hat{\xi}$ is negative and enables us to use the $\emph{WF}$ hypothesis on $E_1$.

We can directly see these terms appear by a direct application of $H_{\Box+\frac{f}{r^2}}$ to the symbol we constructed:

\be
H_{\Box+\frac{f}{r^2}} (a)= -b^2 - g + e_1 + e_2
\ee

where:

\bea
b^2:=&& -2\left[\frac{ \xi^2+|\zeta|^2 }{r^2}\right] \left[\frac{A}{\tau} e^{C\hat{\xi}}\chi_{\hat{\xi}}\tilde{\chi}_r\tilde{\chi}_t\tilde{\chi}_\tau\chi_{\Sigma}\right]\\
g:=&&\\
&&-e^{C\hat{\xi}} \left[ \phi_1^2\left(\frac{\hat{\xi}}{\delta}\right) \left[\frac{2}{\delta\tau}\left[\frac{ \xi^2+|\zeta|^2 }{r^2}\right] \right]\tilde{\chi}_r\tilde{\chi}_t\tilde{\chi}_\tau\chi_{\Sigma} \right.\\
&& +\chi_{\hat{\xi}} \phi_3^2\left( r^2+\alpha\hat{\xi}+2\delta\right) \left[ -2\xi + 2 \frac{\alpha}{\tau}\left[\frac{\xi^2+|\zeta|^2}{r^2} \right]\right]\tilde{\chi}_t\tilde{\chi}_\tau\chi_{\Sigma}\\
&& +\chi_{\hat{\xi}}\tilde{\chi}_r \phi_3^2\left( (t-t_0)^2+\alpha\hat{\xi}+2\delta\right)\\
&& \left.\qquad\qquad\qquad \left[ 2(t-t_0)\tau + 2\frac{\alpha}{\tau} \left[\frac{\xi^2+|\zeta|^2}{r^2}\right] \right] \tilde{\chi}_\tau\chi_{\Sigma}\right]\\
e_1:=&& e^{C\hat{\xi}} \phi_2^2\left(\frac{\hat{\xi}}{\delta}\right) \left[\frac{2}{\delta\tau}\left[\frac{ \xi^2+|\zeta|^2 }{r^2}\right] \right]\tilde{\chi}_r\tilde{\chi}_t\tilde{\chi}_\tau\chi_{\Sigma}\\
e_2:=&& e^{C\hat{\xi}} \tilde{\chi}_{\hat{\xi}} \tilde{\chi}_r\tilde{\chi}_t\tilde{\chi}_\tau H_{\Box+\frac{f}{r^2}}(\chi_{\Sigma})
\eea

Note that for $g$, since $\tau > 0$, $\tau^2 - \frac{\xi^2+|\zeta|_k^2}{r^2} < \delta$, and $\xi$ as well as $t-t_0$ are cut off from being too big by $\tilde{\chi}_r$ and $\tilde{\chi}_t$ respectively (in conjunction with $\tilde{\chi}_{\hat{\xi}}$), we can choose $\alpha$ to be as large as we'd like and force the entire term to be negative.

The final term $e_2$ is clearly supported off of $\Sigma$ since $\chi_{\Sigma}$ is constant in a tubular neighborhood of $\Sigma$ and hence any derivatives will only create terms off of $\Sigma$.

It is tempting to now quantize in the b-Calculus and finish the argument stated in lemma \ref{existence_of_commutant}, however $\langle Ru,u\rangle$ requires additional care. This is because our domain involves b-differential operators of the form $D_t$, $\frac{1}{r}rD_r$, and $\frac{1}{r}D_{\theta_i}$, but the lower order error term arising due to the quantization only a priori obey $R\in \frac{1}{r^2}\Psi^{-1}_b(M)$ because of a deficiency in the filtration from the principal symbol map on the b-calculus. Using the operations of the calculus against a pair of operators will presume the ``worse'' weight of the two in the result, and this means that it is possible non domain bounded terms such as $\frac{1}{r}D_t Op(\cdot)$ are present in $R$.

There is a more technical argument on the sub-principal level we will now present which hinges on factorizing out the domain related $D_t$, $D_r$, and $\frac{1}{r}D_{\theta_i}$ terms at each step of the iterative argument. An elliptic regularity lemma from in \cite{melrose_wunsch_vasy} lets us relate and these terms. In particular, we end up with a bound on a single term involving $D_t$ which is elliptic since $|\tau| > 0$ and hence detects $\emph{WF}$ around the origin.

The remainder also includes the inverse square potential adjoined to $\square$, which can be absorbed into $\frac{1}{r}rD_r$ by an application of Hardy's Inequality as we shall see in the full proof.

\subsubsection{Setup For the Factorized of Main Propagation Theorem \ref{mainpropagation2}}

Our factorized argument will be similar to what was presented in the overview by iterating on differential order, but now we explicitly factor out $D_t$, $\frac{1}{r}rD_r$, and $\frac{1}{r}D_{\theta_i}$ at iterative step to ensure a domain bounded term for all lower order remainders. We denote these good differential operators by $Q_i$.

In the end, we want to investigate the behavior of
\be
\langle -i[A^*A,\Box+\frac{f(r,\theta)}{r^2}] u, u \rangle
\ee
iteratively while maintaining a factorization by $Q_i$'s. This involves expanding the operators on the right hand side by Leibnitz rule and investigating the properties of each resulting term. We further expand each term on the principal symbol level and classify these terms into the categories by the template:
\be
H_{\Box+\frac{f}{r^2}} (a)= -b^2 - g + e_1 + e_2
\ee
seen as seen in the previous section. A lemma will allow us to conclude that the terms corresponding to $G$ have a consistently ``good sign'' (i.e. are always positive). The terms $E_1$ and $E_2$ are bounded by wavefront hypothesis and elliptic regularity respectively. Finally the $R$ term is always domain bounded by the way we iterated this argument only after factorizing out terms which are well behaved in the domain. This shows that the elliptic $B$ terms which will finally be $\langle D_t T B u, D_t T B u \rangle$ are bounded and hence detect a higher order of smoothness than before at the origin.

We choose $A\in \Psi^0_b(M)$ with principal symbol $a = \sigma_{0,0}(A)$ as before:

\bea
a(r,z,t,\xi,\zeta,\tau) := e^{C\hat{\xi}}\chi\left(\frac{\hat{\xi}}{\delta}\right)\tilde{\chi}\left(-r^2+\alpha\hat{\xi}+2\delta\right)\\
\tilde{\chi}\left(-(t-t_0)^2+\alpha\hat{\xi}+2\delta\right)\tilde{\chi}(\tau-\tau_0)\chi\left(\frac{r^2-\hat{\xi}^2-|\hat{\zeta}|^2}{\delta}\right)
\eea

The b-quantization of this symbol requires us to use charts on $\mathbb{S}^{n-1}$. First we take $\{\phi_i(x)\}$ a partition of unity supported in these coordinate charts. Let $\{\chi_i(x)\}$ be smooth cutoffs equal to 1 on a neighborhood of supp$(\phi_i)$ and still supported in the coordinate charts. Let $\rho \in C^\infty_c ((-1/2, 1/2))$ identically 1 near 0. Now in local cooridnates, we can define the b-quantization using projective coordiantes in $r$:
\be
A_i := (2\pi)^{-n}\int e^{i\left(\xi \frac{r-r'}{r'}+\zeta\cdot(z-z')\right)+\tau(t-t')}\rho\left(\frac{r-r'}{r}\right)a(r,t,\xi,\tau,\zeta)\phi_i(\theta)d\xi d\tau d\zeta \sigma
\ee
with $\sigma$ some choice of right density. (Note that our $a$ is independent of $\theta$. We've also just blown up in spatial variables, not time.) Define globally
\be
Op_{trv}(a) = \sum_i \chi_i A_i \chi_i
\ee

Recall that $\square = D_t^*D_t - D_r^*D_r-\frac{1}{r^2}\Delta_{\mathbb{S}^{n-1}}$ where $D_r^* = D_r + \frac{n-1}{r}$ and $\Delta_{\mathbb{S}^{n-1}}$ is the spherical Laplacian which in coordinates is of the form $\Delta_{\mathbb{S}^{n-1}} = \sum_{ij} \frac{1}{\sqrt{|k|}}D_{\theta_i}k^{ij}\sqrt{|k|} D_{\theta_j}$ for $k^{ij}\in C^\infty$ on some chart and $|k|$ the determinant of $k^{ij}$.

We first investigate how our commutant commutes with any of our $Q^i$'s. For $A:=Op_{trv}(a)$ as before, we now consider $A^*A$ (which is very similar on the principal symbol level to $A$) for self-adjointness considerations, and by Leibnitz rule:
\bea
\lefteqn{-i[A^*A,\Box+\frac{f(r,\theta)}{r^2}]=} \\
&& \label{commutant_line1}i[A^*A,D_t^*]D_t + D_t^*i[A^*A,D_t]\\
&&\label{commutant_line2}-i[A^*A,D_r^*]D_r - D_r^*i[A^*A,D_r]\\
&&\label{commutant_line3}-i[A^*A,\frac{1}{r^2}]\left(\sum_{ij} \frac{1}{\sqrt{|k|}}D_{\theta_i}k^{ij}\sqrt{|k|} D_{\theta_j}+f(r,\theta)\right)\\
&&\label{commutant_line4}-\frac{1}{r^2}\left(i\lbrack A^*A,\sum_{ij} \frac{1}{\sqrt{|k|}}D_{\theta_i}k^{ij}\sqrt{|k|} D_{\theta_j}\rbrack+i\lbrack A^*A,f(r,\theta)\rbrack\right)
\eea

We will need some lemmas about the behavior of the $[A^*A,Q^i]$ terms present above.

The following lemma about the commutators involving $D_r$ from \cite{melrose_wunsch_vasy} allow us to improve on the order of the weight more than just what the b-calculus' filtration tells us. Note that as a b-operator $D_r = \frac{1}{r} rD_r$ and so a priori in the b-Calculus filtration $\lbrack D_r, A\rbrack \in \frac{1}{r}\Psi^{2m-1}_b(M)$.

\begin{lem}\label{commutator_lemma2}\cite{melrose_wunsch_vasy}If $A\in\Psi_b^m(M)$, there exist $B\in\Psi_b^m(M)$, $C\in\Psi_b^{m-1}(M)$ depending continuously on $A$ such that
\be
[A, D_r] = B+CD_r
\ee
with $\sigma(B) = -i\partial_r(\sigma(A))$, $\sigma(C) = -i\partial_{\xi}(\sigma(A))$.
\end{lem}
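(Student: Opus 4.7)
The plan is to compute $[A,D_r]$ at the b-symbol level and observe that, although $D_r\notin\Psi^1_b(M)$ (it lies only in $\frac{1}{r}\Psi^1_b(M)$), the commutator with an element of $\Psi^m_b(M)$ decomposes naturally into an honest b-operator of order $m$ plus a term that factors through $D_r$ on the right, which is what absorbs the offending $\frac{1}{r}$ weight.

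Working in a local chart with the projective quantization formula $Op_{trv}$ already introduced in the paper, write $A = Op_{trv}(a)$ with b-symbol $a(r,\theta,\xi,\zeta)$. On the interior $T^*M^\circ$ the operator $[A,D_r]$ is an ordinary pseudodifferential operator whose principal symbol is $\tfrac{1}{i}\{a_{st},\xi_{st}\}$, where $\xi_{st}$ is the standard fiber coordinate dual to $\partial_r$. Translating via $\xi = r\xi_{st}$ and applying the chain rule, this expression expressed in b-coordinates takes the form $-i\,\partial_r a - i(\xi/r)\,\partial_\xi a$ (up to the sign convention for $D_r$). The first summand is exactly the b-principal symbol of an operator $B_0\in\Psi^m_b(M)$; the second equals $\sigma(C_0)\cdot\sigma(D_r)$ with $C_0\in\Psi^{m-1}_b(M)$ having b-symbol $-i\partial_\xi a$, which is indeed one order lower in $\xi$ and hence of b-order $m-1$ as claimed.

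To promote this principal-level identity to an operator identity, pick $B_0, C_0$ with the above b-symbols and set $R_0 := [A,D_r] - B_0 - C_0 D_r$. By construction $R_0$ has vanishing principal symbol in the combined sense, so the same decomposition problem applies to $R_0$ at one order lower, yielding $B_1\in\Psi^{m-1}_b$ and $C_1\in\Psi^{m-2}_b$. Iterating and taking a Borel-type asymptotic summation produces $B\sim\sum_j B_j \in\Psi^m_b(M)$ and $C\sim\sum_j C_j\in\Psi^{m-1}_b(M)$ with $[A,D_r]-B-CD_r$ smoothing; this smoothing residue can then be absorbed into $B$. The continuous dependence of $B$ and $C$ on $A$ is immediate since each $B_j, C_j$ is constructed by explicit differentiation of the symbol of $A$.

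The main obstacle is the careful tracking of b-weights through the iteration. The naive b-calculus composition rule only gives $[A,D_r]\in\frac{1}{r}\Psi^{m-1}_b$, which is one weight order worse than $\Psi^m_b+\Psi^{m-1}_b\cdot D_r$. The improvement rests on verifying that at each iterative stage the residual $R_j$ again fits the decomposition $B_{j+1}+C_{j+1}D_r$ with $B_{j+1}$ of honest b-order $m-j-1$ and $C_{j+1}$ of honest b-order $m-j-2$, and not merely in the larger weighted classes. This reduces to examining the subleading terms of the symbolic composition and checking that every stray $\frac{1}{r}$ factor is paired with exactly one extra power of $\xi$, which is then absorbed into the factored $D_r$ on the right rather than leaking into the weight. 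This is precisely the sharpened commutator identity that the paper needs in order to factor out the domain-friendly operators $D_t,\ rD_r,\ \frac{1}{r}D_{\theta_i}$ without weight losses in the positive commutator argument.
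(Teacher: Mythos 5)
There is a genuine gap, and it sits exactly where the lemma has content. The paper does not reprove this statement: it cites \cite{melrose_wunsch_vasy} and records that the proof goes through \emph{normal operators}, i.e.\ the fact that $[rD_r,A]$ vanishes to one order higher at $r=0$ than the b-filtration predicts, because the normal operator of $A$ (coefficients frozen at $r=0$) is dilation-invariant and hence commutes with $N(rD_r)$, so $N([rD_r,A])=0$ and $[rD_r,A]\in r\Psi_b^{m}(M)$. With that input the decomposition is immediate: writing $D_r=\tfrac1r(rD_r)$ one has $[A,D_r]=\bigl([A,\tfrac1r]\,r\bigr)D_r+\tfrac1r[A,rD_r]$, where $[A,\tfrac1r]\,r\in\Psi_b^{m-1}(M)$ is weightless by the bi-filtration and $\tfrac1r[A,rD_r]\in\Psi_b^{m}(M)$ is weightless precisely because of the normal-operator vanishing; the symbol formulas then follow from Lemma \ref{commutator_lemma1}. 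Your proposal never establishes this weight gain. The interior computation via $\xi=r\xi_{st}$ only confirms the symbol identities away from the boundary, and the sentence asserting that ``every stray $\tfrac1r$ factor is paired with exactly one extra power of $\xi$, which is then absorbed into the factored $D_r$'' is exactly the statement to be proved; it is not a consequence of the principal symbol calculus, which only yields $[A,D_r]\in\tfrac1r\Psi_b^{m}(M)$.

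The iteration also cannot bootstrap itself past this point: after subtracting $B_0+C_0D_r$, the residual $R_0$ is a priori known only to lie in the weighted class $\tfrac1r\Psi_b^{m-1}(M)$, so ``the same decomposition problem at one order lower'' is the same unresolved problem, and Borel summation never removes the $\tfrac1r$. To close the gap you need an input beyond the symbolic expansion: either the normal-operator argument sketched above (vanishing of $N([rD_r,A])$ supplies the factor of $r$ that cancels the weight), or an explicit kernel computation with the projective quantization $Op_{trv}$ showing the extra vanishing at the front face. As written, the proposal correctly identifies the shape of the decomposition and the symbols of $B$ and $C$, but assumes the only nontrivial point of the lemma.
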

\begin{proof} See \cite{melrose_wunsch_vasy} for the proof involving normal operators.
\end{proof}

It is crucial that $B$ and $C$ on the right hand side are weightless operators. The proof of this lemma makes use of the fact that $\lbrack rD_r, A\rbrack$ has an additional order of weight regularity than the b-Calculus would a priori give us. This lemma is shown by using normal operators which have their coefficients involving the $r$ variable are frozen at $r=0$. Please refer to \cite{melrose_wunsch_vasy} for further details.

The next lemma deals with the behavior of $[A,D_\theta]$. Our commutant $A$ has principal symbol that is constant in the spherical variables $\theta_i$; such symbols are called \textbf{basic} symbols so this means that $D_{\theta_i}(a)=0$ on fiber variables on each chart. We are still left with some non characteristic terms though:

\begin{lem}\cite{melrose_wunsch_vasy}\label{commutator_lemma3}If $A \in \Psi_b^m(M)$ where $A=Op(\tilde{a}\psi\left(\frac{r^2-\hat{\xi}^2+|\hat{\zeta}|^2}{\delta}\right))$ where $\tilde{a}$ is independent of $\theta$ and $\psi$ cuts off near $\Sigma$ and
\be
\lbrack A, D_{\theta_i} \rbrack = B \in \Psi_b^m(M)
\ee
obeys $\emph{WF}(B)\cap \Sigma = \emptyset$
\end{lem}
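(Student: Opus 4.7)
The plan is to compute the symbol of $B$ and show that it vanishes in a neighborhood of the characteristic set $\Sigma$, which then immediately gives $\emph{WF}(B)\cap\Sigma=\emptyset$. Starting at the principal-symbol level, property \textbf{(V)} of the b-calculus, applied with $D_{\theta_i}\in\Psi^1_b(M)$ of principal symbol $\zeta_i$, yields $\sigma_{b,m}(B)=\tfrac{1}{i}\{a,\zeta_i\}=\tfrac{1}{i}H_a(\zeta_i)$. Reading off the Hamilton vector field from Lemma \ref{commutator_lemma1}, only the summand $-(\partial_{\theta_j}a)\partial_{\zeta_j}$ contributes when applied to the fiber coordinate $\zeta_i$, so $\sigma_{b,m}(B)=i\,\partial_{\theta_i}a$.

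Next I would exploit the product structure $a=\tilde a(r,t,\xi,\tau)\cdot\psi\bigl(\tfrac{r^2-\hat\xi^2-|\hat\zeta|_k^2}{\delta}\bigr)$, reading the sign in the statement as a typo for the characteristic defining function. Since $\tilde a$ is independent of the spherical variables, the chain rule produces
\[
\partial_{\theta_i}a \;=\; \tilde a\cdot\tfrac{1}{\delta}\,\psi'\!\left(\tfrac{r^2-\hat\xi^2-|\hat\zeta|_k^2}{\delta}\right)\cdot\bigl(-\partial_{\theta_i}|\hat\zeta|_k^2\bigr),
\]
where the last factor is a polynomial in $\hat\zeta$ with coefficients involving $\partial_{\theta_i}k^{jl}$. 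Because $\psi$ cuts off near $\Sigma$, it is identically $1$ in a neighborhood of $\Sigma$, so $\psi'$ vanishes there. Consequently $\sigma_{b,m}(B)$ vanishes on a neighborhood of $\Sigma$, establishing the claim at the principal-symbol level.

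To promote this to a statement about the full wavefront set of $B$, I would use that the commutator of $A$ with a first-order differential operator admits an essentially exact symbolic description: chart-by-chart in the quantization $Op_{trv}$, the left symbol of $[A,D_{\theta_i}]$ is, up to a smoothing contribution arising only from the chart cutoffs $\chi_i$, simply $-iD_{\theta_i}$ applied to the left symbol of $A$. Every such contribution is proportional to a $\theta$-derivative of the symbol of $A$ and so inherits the factor $\psi'$, which is supported off $\Sigma$. Equivalently one iterates: subtracting from $B$ an element of $\Psi^m_b$ with principal symbol $i\partial_{\theta_i}a$ (itself microsupported off $\Sigma$) leaves a remainder in $\Psi^{m-1}_b$ of the same commutator form modulo smoothing, to which the argument reapplies; asymptotic summation then yields an operator of order $-\infty$ over $\Sigma$, exactly as required.

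The main obstacle is the bookkeeping for the chart decomposition in $Op_{trv}$: one must check that patching by the cutoffs $\chi_i$ across spherical charts does not introduce lower-order terms microsupported on $\Sigma$. Since every contribution to the symbol of $B$ arises from differentiating $a$ in $\theta$ and therefore carries the vanishing factor $\psi'$ near $\Sigma$, this reduces to a routine verification rather than a genuine difficulty, though some care is needed because both $k^{ij}$ and the projective coordinates depend on the chart.
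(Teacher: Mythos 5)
Your proposal is correct and follows essentially the same route as the paper: work chart-by-chart in the explicit quantization, observe that (modulo residual terms coming from the chart cutoffs $\chi_i$) the commutator with $D_{\theta_i}$ just differentiates the symbol in $\theta$, and since $\tilde a$ is $\theta$-independent this derivative only hits the cutoff $\psi$ near $\Sigma$, whose derivative is supported off the characteristic set. The one point the paper states explicitly that you fold into ``routine bookkeeping'' is that the $\theta$-derivatives of the partition-of-unity factors $\phi_i$ appearing in each chart quantization do \emph{not} inherit a factor of $\psi'$; rather these contributions cancel upon summation over the charts because $\sum_i \phi_i = 1$.
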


\begin{proof}
Recall the quantization on each chart:
\be
A_i := (2\pi)^{-n}\int e^{i\left(\xi \frac{r-r'}{r'}+\zeta\cdot(z-z')\right)+\tau(t-t')}\rho\left(\frac{r-r'}{r}\right)a(r,t,\xi,\tau,\zeta)\phi_i(z)d\xi d\tau d\zeta \sigma
\ee
and
\be
Op_{trv}(a) = \sum_i \chi_i A_i \chi_i
\ee

Note the independence of $a$ from $\theta$ and $\zeta$, and so such operators commute with $D_\theta$ at the cost of terms $\lbrack \chi, D_\theta \rbrack$ which are smoothing terms because the Schwartz Kernel is smooth. These will not affect our lemma as they are residual.

On each coordinate patch near the diagonal, we are able to represent the quantization under a change of variable in $\theta$ as:
\be
A_i := (2\pi)^{-n}\int e^{i\left(\xi \frac{r-r'}{r'}+\zeta\cdot(z-z')\right)+\tau(t-t')}\rho\left(\frac{r-r'}{r}\right)\tilde{a}(r,t,\xi)\psi_i(r,\theta,\xi,\zeta)d\xi d\zeta d\sigma
\ee
Note our $a$ has the form $a:=\tilde{a}\chi_\Sigma$ and that $\psi_i = \phi_i \chi_\Sigma$ here has a change of variables under $z$. The sums of the derivatives on $\psi$ will cancel under summation. The only possible contributions which come from $\lbrack A, D_\theta \rbrack$ must be from $d(\psi)$ where $\chi_\Sigma$ is non-zero, which must be non-characteristic.
\end{proof}

A final lemma gives the behavior of $[A,\frac{1}{r^2}]$

\begin{lem} If $A\in\Psi_b^m(M)$, then:
\be
[A^*A, \frac{1}{r^2}] = C
\ee
where $C\in \frac{1}{r^2}\Psi_b^{2m-1}$, $\sigma(C) = -\frac{1}{r^2}2i\partial_\xi(\sigma(A))$.
\end{lem}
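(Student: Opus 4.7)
The plan is a direct application of the commutator property (V) of the b-calculus, once $1/r^2$ is recognized as a weighted element of the calculus. First, property (II) identifies $1/r^2$ as an element of $\Psi_b^{0,-2}(M)$, while $A^*A \in \Psi_b^{2m,0}(M)$. Property (V) then places $C = [A^*A, 1/r^2]$ in $\Psi_b^{2m-1,-2}(M) = r^{-2}\Psi_b^{2m-1}(M)$, which is precisely the claimed class. The one bookkeeping item is that the weight $r^{-2}$ genuinely factors outside so the residual operator lies in the weightless class $\Psi_b^{2m-1}$; this is encoded in property (II), since multiplication by a pure weight commutes with the differential filtration on $\Psi_b$.

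For the principal symbol I would compute the b-Poisson bracket of $\sigma(A^*A)$ with $1/r^2$, using the coordinate form of the b-Hamilton vector field supplied by Lemma \ref{commutator_lemma1}. With b-canonical one-form $\xi\,\frac{dr}{r} + \zeta\cdot d\theta + \tau\,dt$, the pair $(r\partial_r,\partial_\xi)$ is canonically conjugate, and since $1/r^2$ is independent of $\xi,\zeta,\tau,\theta$, and $t$, the only surviving contribution to $\{\sigma(A^*A),\, 1/r^2\}_b$ is
\begin{equation}
(\partial_\xi \sigma(A^*A))\cdot r\partial_r(1/r^2) \;=\; -\tfrac{2}{r^2}\,\partial_\xi \sigma(A^*A).
\end{equation}
Applying the Poisson-bracket formula from property (V) then yields $\sigma_{2m-1,-2}(C) = \tfrac{1}{i}\cdot(-\tfrac{2}{r^2})\,\partial_\xi \sigma(A^*A)$, and collecting the weight $1/r^2$ outside reproduces the stated formula.

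The main (and essentially the only) technical obstacle is this weight bookkeeping: one must verify that the commutator does not accumulate any extra weight beyond the $r^{-2}$ already present in $1/r^2$, and that the single differential order is actually shed despite the weight present. This is exactly the combined content of properties (II) and (V), the former ensuring that pure weights do not interact with the differential filtration and the latter supplying the symbol identity. Without these two properties acting in tandem, $1/r^2$ would have to be treated as a singular coefficient and the clean symbolic computation above would be inaccessible, so the whole lemma really is a statement about how nicely the b-calculus accommodates the inverse-square factor that sits at the heart of our problem.
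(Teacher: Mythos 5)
Your proposal is correct, but it takes a shorter, more axiomatic route than the paper. You invoke the bi-filtered commutator property (V) together with (II): $\frac{1}{r^2}\in\Psi_b^{0,-2}$, $A^*A\in\Psi_b^{2m,0}$, hence $[A^*A,\frac{1}{r^2}]\in\Psi_b^{2m-1,-2}=\frac{1}{r^2}\Psi_b^{2m-1}$, with the symbol computed from the b-Poisson bracket of Lemma \ref{commutator_lemma1}. The paper instead argues by hand: from $0=[A^*A, r^2\cdot\frac{1}{r^2}]$ it writes $[A^*A,\frac{1}{r^2}]=\frac{1}{r^2}[A^*A,r^2]\frac{1}{r^2}$ (up to sign) and then uses the same normal-operator ingredient as in Lemma \ref{commutator_lemma2} to see that $[A^*A,r^2]$ gains the weight $r^2$ and drops an order, so the weight $r^{-2}$ genuinely factors outside a weightless $\Psi_b^{2m-1}$ operator. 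The two arguments are close cousins: the paper's Leibniz/conjugation step (equivalently, $r^{2}A^*Ar^{-2}-A^*A\in\Psi_b^{2m-1}$) is essentially the justification of the weight bookkeeping that you take as given from the axiom list, and the paper prefers to make it explicit because elsewhere (e.g.\ for $[A^*A,D_r]$) the crude bifiltration is genuinely too coarse and the factorized form of Lemma \ref{commutator_lemma2} is needed; for the present lemma, where the weight $r^{-2}$ is supposed to remain, your shortcut is legitimate. What your route buys is brevity and a clean symbolic computation; what the paper's route buys is an argument that does not lean on the stated filtration of the commutator and that exhibits the mechanism (conjugation by powers of the boundary defining function) reused later. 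One small point: your bracket gives $\sigma(C)=\frac{1}{i}\bigl(\partial_\xi\sigma(A^*A)\bigr)\,r\partial_r(r^{-2})=\frac{2i}{r^2}\,\partial_\xi\sigma(A^*A)=\frac{4i}{r^2}\,a\,\partial_\xi a$, which matches the paper's later table entry $\sigma(C_5)=4ia\partial_\xi(a)$ rather than the literal formula $-\frac{2i}{r^2}\partial_\xi(\sigma(A))$ in the lemma statement; the discrepancy (sign and $\sigma(A)$ versus $\sigma(A^*A)$) lies in the paper's loose statement, not in your computation.
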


\begin{proof}
We use the same lemma as in the proof of (\ref{commutator_lemma2}), which specifies that $\lbrack rD_r, A\rbrack$ has an additional order of weight regularity than the b-Calculus would a priori give us. Here:
\be
\lbrack A, r^2 \frac{1}{r^2} \rbrack = \lbrack A, r^2 \rbrack \frac {1}{r^2} + r^2\lbrack A, \frac {1}{r^2} \rbrack 
\ee
and so
\be
\lbrack A, \frac {1}{r^2}\rbrack = \frac {1}{r^2}\lbrack A, r^2\rbrack \frac {1}{r^2}
\ee
where the middle term on the right gains us 2 degree of regularity as before. The principal symbol comes immediately from lemma \ref{commutator_lemma1}.
\end{proof}
\begin{lem}
For the specific $A^*A\in \Psi_b^0$ which we mentioned before, this yields:
\be
\begin{array}{cccc}
& & \text{Principal Symbol} & \text{Order} \\
& \lbrack A^*A,D_t^*\rbrack \ \& \ \lbrack A^*A,D_t\rbrack &  & \\
 & = L_i & \sigma(L_i) = 2ia\partial_t(a) & \in \Psi_b^0(M) \\
& \lbrack A^*A,D_r^*\rbrack \ \& \ \lbrack A^*A,D_r\rbrack &  &\\
&  = B_i+C_i D_r^{(*)} & \sigma(B_i) = 2ia\partial_r(a) & B_i \in \Psi_b^0(M) \\
& & \sigma(C_i) = 2ia\partial_\xi(a) & C_i \in \Psi_b^{-1}(M) \\
& \lbrack A^*A,\frac{1}{r^2}\rbrack & & \\
& = \frac{1}{r^2} C_5 & \sigma (C_5) = 4ia\partial_\xi(a) & \in \Psi_b^{-1}(M) \\
& \lbrack A^*A,Op_\Sigma\rbrack  & & \\
& = E_6 + R& \emph{WF}(E_6)\cap \Sigma = \emptyset \ \text{(basic operator)} &\\
& &\sigma(R) \in \Psi_b^{-1}(M) & \\
& \lbrack A^*A,f(r,\theta)\rbrack & & \\
& = C_7 + E_7 & \sigma(C_7) = 2ia\partial_{\xi}(a)r\partial_r(f) &   \in \Psi_b^{-1}(M)\\
& & \emph{WF}(E_7)\cap \Sigma = \emptyset &  \in \Psi_b^{-1}(M) \\
\end{array}
\ee
where $Op_\Sigma = \sum_{ij} \frac{1}{\sqrt{|k|}}D_{\theta_i}^*k^{ij}\sqrt{|k|} D_{\theta_j}$.
\end{lem}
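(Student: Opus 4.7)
The plan is to treat each entry of the table as an immediate corollary of one of the three commutator lemmas already established (\ref{commutator_lemma1}, \ref{commutator_lemma2}, \ref{commutator_lemma3}) together with the $[A,1/r^2]$ lemma, applied to the specific $A^*A$ constructed from our commutant symbol $a$. The principal symbol of $A^*A$ agrees with $a^2$ modulo $\Psi_b^{-1}(M)$, so all the stated principal symbols will come out as $2a$ times the relevant derivative of $a$, up to a factor of $i$ from the Poisson-bracket normalization in property \textbf{(V)} of the b-calculus.

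First I would handle the two easy rows. For $[A^*A,D_t]$ and $[A^*A,D_t^*]$, since $D_t$ is self-adjoint and of pure b-order $1$ with symbol $\tau$, lemma \ref{commutator_lemma1} applied to $a^2$ gives that the Hamilton vector field of $\tau$ is $-\partial_t$, so $\sigma_{b,0}([A^*A,D_t]) = (1/i)\{a^2,\tau\} = 2ia\,\partial_t(a)$, producing the $L_i\in\Psi_b^{0}(M)$ row. For $[A^*A,1/r^2]$, I would use the identity $[A^*A,1/r^2] = (1/r^2)[A^*A,r^2](1/r^2)$ as in the proof of the $[A,1/r^2]$ lemma above; the middle factor $[A^*A,r^2]$ gains two orders of weight (via the $rD_r$ observation), so the result lies in $(1/r^2)\Psi_b^{-1}(M)$, and its principal symbol is $-2i\partial_\xi(a^2)\cdot(1/r^2) = -4ia\,\partial_\xi(a)/r^2$, matching the stated $C_5$ up to the overall sign convention used throughout the table.

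Next I would address the $D_r$ and $D_r^*$ rows. Lemma \ref{commutator_lemma2} applied with $A$ replaced by $A^*A \in \Psi_b^0(M)$ gives directly the factorization $[A^*A,D_r] = B_i + C_i D_r$ with $B_i\in\Psi_b^0(M)$, $C_i\in\Psi_b^{-1}(M)$, and principal symbols $-i\partial_r(a^2) = -2ia\,\partial_r(a)$ and $-i\partial_\xi(a^2) = -2ia\,\partial_\xi(a)$ respectively. The only subtlety is that $D_r^* = D_r + (n-1)/r$, so $[A^*A,D_r^*]$ picks up an extra $[A^*A,(n-1)/r]$ term; this extra piece is handled exactly like the $1/r^2$ case, and the resulting improvement in weight places it into the $C_i D_r^{(*)}$ summand, preserving the stated orders.

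Finally, for $[A^*A,Op_\Sigma]$ and $[A^*A,f(r,\theta)]$ I would use the \emph{basic symbol} property of $a$: the symbol is independent of the spherical variables $\theta$, so lemma \ref{commutator_lemma3} applies. Expanding $Op_\Sigma$ as $\sum_{ij}\frac{1}{\sqrt{|k|}}D_{\theta_i}^*\,k^{ij}\sqrt{|k|}\,D_{\theta_j}$ and using Leibniz, every commutator $[A^*A,D_{\theta_i}]$ lands in an operator whose wavefront set misses $\Sigma$; the nested commutators contribute the lower-order remainder $R\in\Psi_b^{-1}(M)$, giving the decomposition $E_6+R$ with $\emph{WF}(E_6)\cap\Sigma=\emptyset$. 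For $[A^*A,f(r,\theta)]$ I would split $f$ into its $r$-dependence and $\theta$-dependence: the $\theta$-derivative part is off-characteristic by the same basic-symbol argument and becomes $E_7$; the radial part comes from the Poisson-bracket formula in coordinates, contributing $\sigma(C_7) = 2ia\,\partial_\xi(a)\,r\partial_r(f)$ with the $r\partial_r$ factor arising because in b-coordinates the $\partial_\xi$ slot of the Hamilton vector field is paired with $r\partial_r$ rather than $\partial_r$ (lemma \ref{commutator_lemma1}).

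The main obstacle, and the reason this table is worth stating, is the weight bookkeeping in the $1/r^2$ and $D_r$ rows: the naive b-calculus filtration would force $[A^*A,D_r]\in (1/r)\Psi_b^{-1}(M)$ and $[A^*A,1/r^2]\in (1/r^2)\Psi_b^{0}(M)$, both of which are too weak for the later iteration. The key point is therefore to invoke the sharpened factorization of lemma \ref{commutator_lemma2} (where $B,C$ are weightless) and the $rD_r$-trick for the $1/r^2$ commutator, so that each row above produces factors of $D_t$, $D_r^{(*)}$, $D_{\theta_i}/r$ that can be grouped with the good differential operators $Q_i$ in the iterative propagation argument rather than accumulating as uncontrolled inverse powers of $r$.
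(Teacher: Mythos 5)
Your proposal is correct and follows essentially the same route as the paper's own (very terse) proof: the first three rows are read off from Lemmas \ref{commutator_lemma1}, \ref{commutator_lemma2} and the $[A,\frac{1}{r^2}]$ lemma applied to $A^*A$ (principal symbol $a^2$), the $Op_\Sigma$ row from the basic-symbol fact that all $\partial_\zeta$/$\partial_\theta$ contributions are supported off $\Sigma$ or are lower-order remainders, and the $f(r,\theta)$ row from the b-Hamilton vector field formula pairing $\partial_\xi$ with $r\partial_r$. Your extra care with the $D_r^*=D_r+\frac{n-1}{r}$ term and with $a^2$ versus $a$ goes slightly beyond what the paper writes, and the residual sign discrepancies you flag are the paper's own convention looseness rather than gaps in your argument.
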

\begin{proof}
See the preceding lemmas for the first three rows.

The non-characteristic terms are from $\partial_\zeta$ terms which are supported outside our characteristic set as our symbol is basic. Note also that $k^{ij}$ depend only on $\theta_i$'s and the principal symbol of $\lbrack A^*A, k^{ij}\rbrack$, $\lbrack A^*A, \sqrt{|k|}\rbrack$, and $\lbrack A^*A, \frac{1}{\sqrt{|k|}}\rbrack$ are $2ia\partial_\zeta (a) \partial_\theta (k^{ij})$, $2ia\partial_\zeta (a) \partial_\theta (\sqrt{|k|})$, and \newline $2ia\partial_\zeta (a) \partial_\theta (1/\sqrt{|k|})$ respectively. Since $k_{ij}$ is a metric, its inverse is smooth and each of terms with derivatives in $\theta$ will be a smooth function in $\theta$ (with respect to trivialization on chart coordinates which we have built into our quantization). Our basic symbol then forces the highest order of these terms off the characteristic set.

The final row comes immediately from \ref{commutator_lemma1}.
\end{proof}

Now we can write expressions for these factorized forms whose remainders only pick up additional domain bounded terms. Temporarily renaming $C_3$ and $C_5$ to $\tilde{C}_3$ and $\tilde{C}_5$ reserve them for after we commute, we get:

\bea
\lefteqn{-i\lbrack A^*A, \square+\frac{f(r,\theta)}{r^2}\rbrack=} \\
&& \lbrack L_1 D_t + D_t^* L_2 - \{(B_3 + \tilde{C_3} D_r^*)D_r + D_r^*(B_4 + C_4 D_r^*) + \\ &&\frac{1}{r^2} \tilde{C_5} (\sum_{ij} D_{z_i}^*k^{ij}D_{z_j} - f(r,\theta))+ E_6+R\}\rbrack+\frac{1}{r^2}(C_7+E_7)
\eea
We can regroup to fit our final commutator argument better by letting $E_{3} = \lbrack D_r^*, C_{3} \rbrack \in \Psi_b^{-1}(M)$ and $E_5 = \lbrack D_{z_i}^*, C_5] \in \Psi_b^{-1}(M)$, where the latter operator is supported off the characteristic set and so is absorbable in $E_6$; call $C_{3} = \tilde{C}_{3}+E_{3}$ which we can also reabsorb by using that C is basic. This gives:
\bea
-i\lbrack A^*A, \square+\frac{f(r,\theta)}{r^2}\rbrack &=& \lbrack \label{first_line}L_1 D_t +D_t^* L_2 - B_3 D_r - D_r^* B_4\\
&&\label{potential_line}-(D_r^*(C_3 + C_4) D_r + \sum_{ij} \frac{1}{r}D_{\theta_i}^*C_5k^{ij}\frac{1}{r}D_{\theta_j} \\
&&- \frac{f(r,\theta)}{r}C_5\frac{1}{r}) +\frac{1}{r^2}(C_6 -E_6 + E_7) + R \rbrack
\eea
Here $R$ is a sum of terms from $Q^i\Psi^{-1}_b$ and $(Q^i)^2\Psi^{-2}$ which is lower order and comes from the commutators of our explicitly factorized terms terms above. (In particular, we note that these are bounded with respect to lower order domain norm when paired as $\langle R u, u\rangle$.) When we ultimately pair with our solution $u$, we want to absorb the terms with only first order differential operators in line (\ref{first_line}) into line (\ref{potential_line}). We will see that this is because the principal symbol of the second line will dominate the smaller symbol of the first as in the principal level overview argument earlier.

Let's examine the terms dominant $C_3+C_4$ and $C_5$ present in (\ref{first_line}) and see what contributions their (common) principal symbol $4ia\partial_\xi a$ makes.

Recall our commutant:
\bea
a(r,z,t,\xi,\zeta,\tau) := e^{C\hat{\xi}}\chi\left(\frac{\hat{\xi}}{\delta}\right)\tilde{\chi}\left(-r^2+\alpha\hat{\xi}+2\delta\right)\\
\tilde{\chi}\left(-(t-t_0)^2+\alpha\hat{\xi}+2\delta\right)\tilde{\chi}(\tau-\tau_0)\chi\left(\frac{r^2-\hat{\xi}^2-|\hat{\zeta}|^2}{\delta}\right)
\eea
abbreviated as: $a := e^{C\hat{\xi}}\chi_{\hat{\xi}}\tilde{\chi}_r\tilde{\chi}_t\chi_\tau\chi_{\Sigma}$. First we make a few factorizations to isolate the types of terms we are interested in:
\bea
4ia\partial_{\xi}a &=& 4i\label{mainterm}\frac{A}{\tau} e^{2 A \hat{\xi}} \chi^2_{\hat{\xi}} \tilde{\chi}^2_r \tilde{\chi}^2_t \tilde{\chi}_\tau^2\chi^2_{\Sigma}\\
&&+4i\label{wavefronthypothesis}e^{C \hat{\xi}}\chi_{\hat{\xi}}\left[\phi_{1,\xi}^2-\phi_{2,\xi}^2\right] \left[\frac{1}{\delta \tau}\right]\tilde{\chi}^2_r \tilde{\chi}^2_t \tilde{\chi}_\tau^2 \chi^2_{\Sigma}\\
&&+4i\label{goodsign1}e^{C \hat{\xi}}\chi^2_{\hat{\xi}}\tilde{\chi}_r \phi_{3,r} \left[\frac {\alpha}{\tau}\right]\tilde{\chi}^2_t \tilde{\chi}_\tau^2 \chi^2_{\Sigma}\\
&&+4i\label{goodsign2}e^{C \hat{\xi}}\chi^2_{\hat{\xi}}\tilde{\chi}^2_r \tilde{\chi}_t \phi_{3,t}\left[\frac{\alpha}{\tau}\right] \tilde{\chi}_\tau^2 \chi^2_{\Sigma}\\
&&+4i\label{noncharacteristicterm}e^{C\hat{\xi}}\chi^2_{\hat{\xi}}\tilde{\chi}^2_r\tilde{\chi}^2_t\chi_{\Sigma}\tilde{\chi}_\tau^2\partial_\xi \left(\chi_{\Sigma}\right)
\eea
Recalling the template for the terms we want in our pairing from the lemma in the previous section, this suggests we decompose each into the following:
\be
-iC_\cdot = -T_{-1}^*B^*BT_{-1}-G+E_1+E_2+R
\ee

The term $T_{-1}\in \Psi^{-1}_b(M)$ with $\sigma(T_{-1})=\frac{1}{|\tau|}\chi_\Sigma$ (the $\chi_\Sigma$ makes this a symbol) is elliptic. We will use this term in subsequent sections to shift orders on paired terms.

As before, $B\in\Psi_b^{1/2}(M)$ is the main term quantized from
\be
b:=\sqrt{\tau} A e^{ A \hat{\xi}} \chi_{\hat{\xi}} \tilde{\chi}_r \tilde{\chi}_t \tilde{\chi}_\tau\chi_{\Sigma}
\ee
i.e. $\sigma(B^*B)$ is $-i\tau^2$ times line (\ref{mainterm}) and is elliptic at the boundary $r=0$(from our assumption that $\tau>0$) so that it detects $1/2$ order of extra regularity there.

$G \in \Psi_b^{-1}(M)$ the ``good sign term'' from (\ref{wavefronthypothesis}) (\ref{goodsign1}) and (\ref{goodsign2}). It corresponds only to the terms arising from the $\xi$ derivative, and we define its principal symbol $g$ by:
\bea
\label{good_sign_paired}-g:=&&-e^{2A\hat{\xi}} \chi_\xi \phi_1^2\left(\cdot\right) \left[\frac{1}{\delta\tau} \right]\tilde{\chi}_r^2\tilde{\chi}_t^2\tilde{\chi}_\tau^2\chi^2_{\hat{\xi},\hat{\zeta}} \\
&& -e^{2A\hat{\xi}}\chi_{\hat{\xi}}^2 \tilde{\chi}_r \phi_1^2\left( \cdot \right) \left[ \frac{\alpha}{\tau}\right]\tilde{\chi}_t^2\tilde{\chi}_\tau^2\chi^2_{\hat{\xi},\hat{\zeta}}\\
&& -e^{2A\hat{\xi}}\chi_{\hat{\xi}}^2\tilde{\chi}_r^2 \tilde{\chi}_t \phi_1^2\left( \cdot \right) \left[\frac{\alpha}{\tau} \right]\tilde{\chi}_\tau^2\chi^2_{\hat{\xi},\hat{\zeta}}
\eea

$E_1\in \Psi_b^{-1}(M)$ the ``$\emph{WF}$ hypothesis term'' from (\ref{wavefronthypothesis}),
\be
e_1:= e^{2A\hat{\xi}} \phi_2^2\left(\frac{\hat{\xi}}{\delta}\right) \left[\frac{2}{\delta\tau}\left[\frac{ \xi^2+|\zeta|^2 }{r^2}\right] \right]\tilde{\chi}^2_r\tilde{\chi}^2_t\tilde{\chi}^2_\tau\chi^2_{\Sigma}\\
\ee

$E_2\in \Psi_b^{-1}(M)$ from (\ref{noncharacteristicterm}) the non characteristic term:
\be
e^{C\hat{\xi}}\chi^2_{\hat{\xi}}\tilde{\chi}^2_r\tilde{\chi}^2_t\chi_{\Sigma}\tilde{\chi}_\tau^2\partial_\xi \left(\chi_{\Sigma}\right)
\ee
and $R\in \Psi_b^{-2}(M)$ a domain bounded lower order remainder. (See previous section for a diagram.)

The main summand above involving $C_3+C_4$ and $C_5$ can be rearranged at the cost of commutator terms to form:
\bea
-T_{-1}^*B^*\left(D_r^*D_r+\sum_{ij}\frac{1}{r}D_{\theta_i}k^{ij}\frac{1}{r}D_{\theta_j} - \frac{f(r,\theta)}{r^2}\right)BT_{-1}\\
-G\left(D_r^*D_r+\sum_{ij}\frac{1}{r}D_{\theta_i}k^{ij}\frac{1}{r}D_{\theta_j} - \frac{f(r,\theta)}{r^2}\right)+E_1+E_2+R
\eea
where $\kappa_{ij}$ correspond to the smooth functions that appear in the Laplacian as before. We group together terms with properties corresponding to $E_1$ and $E_2$. We have also grouped resulting commutator terms (which are all of small enough order) into the remainder term, which are now are sums of the remainder terms before and also new terms $R \in Q_i^*Q_i\Psi^2_b(M)$. (These are also bounded by our domain norm) Note that in particular, $B$ is basic (its symbol constant on fiber variables) so commutators with $D_{\theta_i}$ are off the characteristic set and we can combine these into our existing non characteristic set term.

Note that all subsequent terms named $E_1$, $E_2$,$R$ will not necessarily the same expression in every line, they will simply obey the corresponding properties corresponding to terms bounded by wavefront hypothesis, bounded by elliptic regularity (from having support off $\Sigma$), and being a lower order domain bounded remainder term respectively.

Since we want our final bound to be on the term $\langle D_t T Bu, D_t T Bu \rangle$, we want to exchange the differential operators $D_r$ and $\frac{1}{r}D_{\theta_i}$ in line (\ref{first_line}) to $D_t$. This can be done using $\left(\square + \frac{f(r,\theta)}{r^2}\right)u=0$ to help us. As we know the explicit form of the Laplacian $-\left(\square+\frac{f(r,\theta)}{r^2}\right) = D_t^*D_t - D_r^*D_r-\sum_{ij}\frac{1}{r}D_{\theta_i}k^{ij}\frac{1}{r}D_{\theta_j}+\frac{f(r,\theta)}{r^2}$, we can exchange $D_r$ and $\frac{1}{r}D_{\theta}$ terms for $D_t$ by writing the above as:
\bea
-W\left(\square+\frac{f(r,\theta)}{r^2}\right) - B^*T_{-1}^*(D_t^*D_t)T_{-1}B \\
-G\left(D_t^*D_t\right)+E_1+E_2+R
\eea
where $W$ is given by $B^*T_{-1}^*T_{-1}B+G$ where we have used the fact that $B$ is basic and $T_{-1}$ is lower order to absorb commutators from commuting terms around into the $E$ and $R$ terms again.

Likewise, $L_i$ and $B_i$ have symbols:
\bea
2ia\partial_t a &=& 2ie^{2A\hat{\xi}}\chi^2_{\hat{\xi}}\chi^2_{r}\tilde{\chi}_t\phi_{1,t}(\cdot)\left[2(t-t_0\right]\tilde{\chi}_\tau^2\chi^2_{\hat{\xi},\hat{\zeta}}\\
2ia\partial_r a &=& 2ie^{2A\hat{\xi}}\chi^2_{\hat{\xi}}\tilde{\chi}_r\phi_{1,r}(\cdot)\left[2r\right]\chi^2_{t}\tilde{\chi}_\tau^2\chi^2_{\hat{\xi},\hat{\zeta}}
\eea
Note the similarity to the terms which we termed $G_t$ and $G_r$ above, and in final pairing expressions, these terms will be absorbed into the $G$ terms which have good sign and make no contribution against our main commutator.

\subsubsection{Main argument for the Propagation Theorem \ref{mainpropagation2}}

Now for the main inductive argument where we introduce the pairing with $u$, a domain bounded solution. For now assume $s=0$ without loss of generality. (We will shift $s$ with a separate operator at the end which smooths as well.) Because of the restriction of our solution to a domain, we can assume that for some small $s_0$, that at our point $q_0 \notin \emph{WF}^{s_0}_{b,\tilde{\mathcal{D}}}$. Now we assume $\emph{WF}^s_{b,\tilde{\mathcal{D}}} u \cap U = \emptyset$. We will show that $\emph{WF}^{s+1/2}_{b,\tilde{\mathcal{D}}} u \cap \tilde{U} = \emptyset$ for some open neighborhood $\tilde{U}\subset U$ Let's pair the actual commutator expression we will use with our solution, expanding using the expressions we established above:
\bea
\lefteqn{-i\langle\lbrack A^*A, \square + \frac{f(r,\theta)}{r^2}\rbrack u, u \rangle=}\\
&& \langle -W (\Box+\frac{f(r,\theta)}{r^2})u, u\rangle\\
&&-\langle D_tT_{-1}Bu, D_tT_{-1}Bu\rangle \\
&&\label{goodsignline}- \langle G D_t^*D_t u,u \rangle \\
&&\label{absorbed_into_good_sign}+ \langle L_1D_t u, u \rangle + \langle L_2D_t u, u \rangle + \langle B_3 D_r u, u \rangle + \langle B_4 u, D_r u\rangle \\
&&+\langle \frac{1}{r^2}C_7 u, u\rangle\\
&&+ \langle E_1 u, u \rangle + \langle E_2 u, u\rangle + \langle \frac{1}{r^2}E_7 u, u\rangle + \langle R u, u\rangle
\eea

Note that this is only a formal expression as we don't necessarily have sufficiently regularity on $u$ to know that these $L^2$ pairings make sense; we will address this issue with an additional regularizing term and a weak convergence argument at the end of the proof. Here we have absorbed the non-characteristic $E_6$ and $E_7$ term into $E_2$. (The weight of $\frac{1}{r^2}$ will make no difference as after using the cauchy-schwarz followed by Hardy's inequality, we can substitute $\frac{1}{r}$ with $D_r$ plus some multiple of the domain norm of $u$ which is inductively bounded.)

That $G$ be positive is crucial. Using the form of the principal symbol from (\ref{good_sign_paired}), we decompose $G$ into 3 operators with positive principal symbol $G = G_\xi^* G_\xi + G_r^* G_r+ G_t^* G_t$, where $G_\xi$, $G_r$, and $G_t$ and their adjoints are quantized from the square roots of $-i$ times the principal symbols corresponding to the lines starting at (\ref{good_sign_paired}). We will ultimately absorb the $L_iD_t$ and $B_i D_r$ terms from line (\ref{absorbed_into_good_sign}) into the $G_t$ and $G_r$ terms respectively. By showing $L_iD_t$ and $B_i D_r$ terms are smaller in norm than the norm of the corresponding positive $G_tD_t$ and $G_rD_t$ terms respectively in (\ref{goodsignline}), we see that their sum with $G$ is still positive.

The naive, purely principal symbol level argument works for the $D_t$ terms as it doesn't have a b-weight. First consider the terms $\langle L_i D_t u, u\rangle$ which we now absorb into $G_t$. To do this, we consider the difference $\frac{1}{2}D_t^*G_t^* G_tD_t-L_iD_t$ as there are two terms and perform one step of a square root argument to achieve a positive term which helps us as it will have the same sign as $G_t$ plus a residual term with lower order that has no weight and is domain bounded by inductive hypothesis. We get
\be
\langle (D_t^*G_t^* G_t-L_i)D_t u, u\rangle = \langle S^* S u, u\rangle + \langle R u, u\rangle
\ee
where $\sigma(S) = \sqrt{\sigma((G_tD_t^*D_t-L_iD_t))} = \sqrt{\chi_{\hat{\xi}}^2\tilde{\chi}_r^2 \tilde{\chi}_t \phi_1^2\left( \cdot \right) \lbrack 2\alpha  - 2(t-t_0)\rbrack \tilde{\chi}_\tau^2\chi^2_{\hat{\xi},\hat{\zeta}}}$,\\ $R\in D_t\Psi_b^{-1}(M)$. This is a smooth symbol as $\tau > 0$ and we can make $\alpha$ as large as we'd like to force the factor $2\alpha - 2(t-t_0)$ to be positive. Therefore, this term will have the same sign as our main term and the residual term is bounded by hypothesis. The same technique works for the other term involving $D_t$.

We could try to use the same technique for the terms $\langle B_iD_r u, u\rangle$, however as a b- operator, $D_r = \frac{1}{r}rD_r$ and we would impose an arbitrary weight on our remainder which is not attached to the specific differential operators mentioned above, which was the whole point of factorizing $Q^i$ at each step. Instead, we show an estimate based on a lemma that allows us to estimate domain derivative in terms of each other, in particular $D_r$ and $\frac{1}{r}D_{\theta_i}$ can be bounded interms of $D_t$.

We factorize $B_i$ in terms of $G$ which becomes:
\be
B_i= G_r^* T_{1}^* R_i T_{1}G_r + \tilde{R}
\ee
where $\sigma(R_i)=\frac{2 r}{\alpha \tau}\chi_{\Sigma}$, $\sigma(T_1)=\langle \tau \rangle\chi_{\Sigma}$ and $\tilde{R}\in\Psi_b^{-1}(M)$ is residual. Clearly the supremum of the symbol of $\sigma(R_1) = \frac{2 r}{\alpha \tau}\chi_{\Sigma}$ is arbitrarily small since $r$ is controlled by $\chi_r$'s support, and we can make $\alpha$ as large as we'd like.

Letting $v=G_r T_1$ and commuting $D_r$ using lemma \ref{commutator_lemma2} and absorbing these weightless lower order terms into $\tilde{R}$ yields:
\bea
\langle B_iD_r u, u\rangle &=& \langle R_i D_r v, v\rangle+ \langle \tilde{R}u,u\rangle\\
&\leq& \|R_i D_r v\| \|v\|
\eea
by cauchy-schwarz.

$T_{1}$ is elliptic so it has a parametrix $T_{-1}$ and $Id=T_1T_{-1}+F$. Using this and letting $v=G_r T_1 u$:
\bea
\lefteqn{\|R_i D_r v\|} \\
&=& \|R_i(T_1T_{-1}+F)D_rv\|\\
&\leq& \|(R_iT_1)(T_{-1}D_rv\| + \|R_iFD_rv\|\\
&\leq& 2\sup|\sigma(R_i)\| \|T_{-1}D_rv\| + \|R_i'T_{-1}D_rv\|+ \|R_iFD_rv\|
\eea
for some $R_i'\in \Psi_b^{-1}(M)$.

A useful proposition for cross-terms $\|u\|\|v\|$ is:

\begin{prop}\label{arbitrage}
$\|u\|\|v\| \leq \frac{1}{\gamma}\|u\|^2 + \gamma \|v\|^2$
\end{prop}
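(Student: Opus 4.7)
The plan is to invoke the classical Young / ``Peter--Paul'' inequality. Introduce the rescaling $a := \tfrac{1}{\sqrt{\gamma}}\|u\|$ and $b := \sqrt{\gamma}\|v\|$, so that the left-hand side of the proposition is exactly $ab$. Then observe the trivial non-negativity
\[
0 \;\le\; (a-b)^2 \;=\; a^2 - 2ab + b^2,
\]
which rearranges to $2ab \le a^2 + b^2 = \tfrac{1}{\gamma}\|u\|^2 + \gamma\|v\|^2$. Since $\|u\|,\|v\| \ge 0$ we have $ab \le 2ab$, so the stated bound follows (indeed with a factor of $2$ to spare, which we simply discard so the constant has the clean form stated).

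There is essentially no obstacle here; the proposition is only recorded so that in the main argument it may be applied with a free parameter $\gamma$. The intended use is visible just before the statement: one has a cross-term of the form $\|R_i D_r v\|\,\|v\|$ arising after Cauchy--Schwarz, and one wants to bend the split so that the $\|v\|^2 = \|G_r T_1 u\|^2$ piece is absorbed into the ``good sign'' contribution from $G$ (choosing $\gamma$ small to compensate for the coefficient), while the $\|R_i D_r v\|^2$ piece can be controlled by the inductive hypothesis plus the smallness of $\sup|\sigma(R_i)| = O(r/\alpha)$. Thus the only thing the lemma needs to supply is the freedom to redistribute weight between the two factors, which is precisely what the parameter $\gamma$ provides.
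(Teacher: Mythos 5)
Your proof is correct and follows essentially the same route as the paper: both expand a nonnegative square of a $\gamma$-rescaled difference of the two norms (the paper uses $\bigl(\tfrac{1}{\sqrt{2\gamma}}\|u\| - \sqrt{2\gamma}\|v\|\bigr)^2 \ge 0$, yours an equivalent rescaling), and then discard a harmless constant factor. No gaps.
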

\begin{proof} Expand:
$\left(\frac{1}{\sqrt{2\gamma}} \|u\| - \sqrt{2\gamma}\|v\|\right)^2 \ge 0.$
\end{proof}

Applying this to $\|R_i D_r v\| \|v\|$:

\bea\label{before_lemm}
\langle R_iD_rv, v\rangle &\leq& 2\sup|\sigma(R_i)\|T_{-1}D_r v\| \|v\| + 2\gamma \|v\|^2 \\
&&+ \frac{1}{\gamma}\|R_i'T_{-1}D_rv\|^2+\frac{1}{\gamma}\|FD_rv\|^2
\eea

We still want to absorb this term into $\|G_tD_t u\|^2$, and this will require that we replace $D_r$ in our expression by $D_t$. This requires the following crucial lemma which lets us do this without introducing non-domain bounded remainders:

\begin{lem}\cite{melrose_wunsch_vasy}\label{swap_lemma}
For $K\subset U\subset ^bS^*M$, $K$ compact and $U$ open and $A_r \in \Psi_b^{s-1}(M)$ for $r\in(0,1]$ a basic family with $\emph{WF}_b'(A_r)\subset k$ bounded in $\Psi^s_{b\infty}$. Then there exists $G\in \Psi_b^{s-1/2}(M)$ with $\emph{WF}_b'(G)\subset U$ and $C_0>0$ such that when $\square u = 0$,
\bea
\left\|\int_M (|d_M A_ru|^2-|D_tA_ru|^2) \right\| \leq C_0(\|u\|_{\tilde{\mathcal{D}}_loc}^2 + \|Gu\|_{\mathcal{D}}^2)
\eea
\end{lem}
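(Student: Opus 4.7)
The plan is to recognize the integrand as exactly the quadratic form $\langle \square A_r u,\, A_r u\rangle$, so that the equation $\square u = 0$ collapses the estimate into a single commutator computation. With the sign convention $\square = -D_t^* D_t + d_M^* d_M$, integration by parts (justified by the domain regularity of $A_r u$) first gives
\[
\int_M \big(|d_M A_r u|^2 - |D_t A_r u|^2\big) \;=\; \langle \square A_r u,\, A_r u\rangle.
\]
Moving $\square$ across $A_r$ and using $\square u = 0$ then yields
\[
\langle \square A_r u,\, A_r u\rangle \;=\; \langle [\square, A_r]\, u,\, A_r u\rangle,
\]
so the task reduces to estimating this single pairing.

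My next step will be to expand $[\square, A_r]$ using the splitting $\square = -D_t^* D_t + D_r^* D_r + \frac{1}{r^2}\Delta_\theta$ (with $\Delta_\theta$ in the self-adjoint form from the preceding subsection) together with Leibniz. Each piece decomposes into terms of the schematic shape $\langle [Q_i, A_r]\, u,\, Q_j A_r u\rangle$, where $Q_i, Q_j$ range over the good domain generators $D_t,\ D_r,\ \frac{1}{r} D_{\theta_k}$. For the $D_r$-commutators I will invoke Lemma~\ref{commutator_lemma2} to write $[D_r, A_r] = B_r + C_r D_r$ with $B_r, C_r$ of the correct weightless b-order, and for the angular commutators I will invoke Lemma~\ref{commutator_lemma3} to localize them away from the characteristic set, so that those contributions are controlled by b-elliptic regularity alone.

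To close the estimate, I will choose $G \in \Psi_b^{s-1/2}(M)$ elliptic on $K$ with $\mathrm{WF}_b'(G) \subset U$. An elliptic parametrix argument then gives
\[
\|Q_j A_r u\| \;\lesssim\; \|Gu\|_{\mathcal{D}} + \|u\|_{\tilde{\mathcal{D}}_{\mathrm{loc}}}, \qquad \|[Q_i, A_r]\, u\| \;\lesssim\; \|Gu\|_{\mathcal{D}} + \|u\|_{\tilde{\mathcal{D}}_{\mathrm{loc}}},
\]
the half-order drop in the order of $G$ reflecting the fact that each commutator lowers order by one. Cauchy-Schwarz combined with $ab \leq \tfrac{1}{2}(a^2+b^2)$ then produces exactly the right-hand side of the claimed inequality.

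The hard part will be the weight bookkeeping in the b-calculus. Because $\square$ is an $r^{-2}$-weighted b-operator of order $2$, a naive Leibniz expansion would pollute each commutator with bad factors of $r^{-1}$ or $r^{-2}$, and the right-hand side would then fail to be bounded by a clean b-Sobolev quantity $\|Gu\|_{\mathcal{D}}$. The whole point of Lemmas~\ref{commutator_lemma2} and~\ref{commutator_lemma3} (both from \cite{melrose_wunsch_vasy}) is to recover exactly this lost weight, so essentially every line of the argument above is really a repackaged invocation of one of those two lemmas. A secondary technical point is that all estimates must be uniform in the regularizing parameter $r \in (0,1]$, since $A_r$ is only uniformly bounded in $\Psi_b^s$ rather than in $\Psi_b^{s-1}$; the implicit constants in the commutator asymptotics must therefore be tracked as $r \to 0$.
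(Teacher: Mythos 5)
The paper itself offers no proof of this lemma: it is quoted from \cite{melrose_wunsch_vasy} and the in-paper ``proof'' is only a pointer to that reference, so the comparison is with the argument there. That argument does begin exactly as you do: with $v=A_ru$, write $\pm\left(\|d_M v\|^2-\|D_t v\|^2\right)=\langle \square v, v\rangle$ and use $\square u=0$ to replace $\square A_r u$ by $[\square,A_r]u$. That opening reduction is correct.

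The gap is in how you close the estimate, and it is precisely the uniformity in $r$ that you relegate to a ``secondary technical point.'' The family $A_r$ lies in $\Psi_b^{s-1}(M)$ for each fixed $r$ but is only \emph{bounded} in $\Psi_b^{s}$; so uniformly in $r$, $Q_jA_ru$ is an object of order $s+1$, whereas $\|Gu\|_{\mathcal{D}}$ with $G\in\Psi_b^{s-1/2}(M)$ controls only order $s+\tfrac12$ (the domain norm contributes one derivative). Hence your intermediate claim $\|Q_jA_ru\|\lesssim\|Gu\|_{\mathcal{D}}+\|u\|_{\tilde{\mathcal{D}}_{loc}}$ cannot hold with a constant independent of $r$: it holds for each fixed $r$ with a constant that blows up as $r\to 0$, and an $r$-dependent constant makes the lemma vacuous, since its entire role in this paper is to survive the limit $r\to 0$ in the regularization step ($A_r=A\Lambda_r$) at the end of the main argument. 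The correct proof cannot estimate the two slots of $\langle [Q_i,A_r]u,\,Q_jA_ru\rangle$ separately after a naive Cauchy--Schwarz; one must keep the pairing intact and redistribute half an order between the two factors (insert an elliptic pair in $\Psi_b^{1/2}$ and $\Psi_b^{-1/2}$, or equivalently split the commutator symbol symmetrically), after which \emph{both} factors are uniformly of order $s+\tfrac12$ and are controlled microlocally, on the elliptic set of $G$, by $\|Gu\|_{\mathcal{D}}+\|u\|_{\tilde{\mathcal{D}}_{loc}}$; the weight recovery of Lemmas \ref{commutator_lemma2} and \ref{commutator_lemma3} must be carried out inside this symmetric splitting. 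That half-order transfer --- not merely ``each commutator lowers order by one'' --- is the actual source of the exponent $s-\tfrac12$ in the statement, and it is the step your sketch is missing.
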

\begin{proof}For a proof, please refer to \cite{melrose_wunsch_vasy}. Note that the $r$ parameter is required for a later regularity approximation argument.
\end{proof}

This allows us to estimate the $D_r$ term as follows:
\bea
\|D_r A_ru\|^2 &\leq& \left\|\int_M (|d_M A_ru|^2-\|D_tA_ru\|^2+\|D_tA_ru\|^2) \right\| \\
&\leq& C_0(\|u\|_{\tilde{\mathcal{D}}_{\text{loc}}}^2 + \|Gu\|_{\mathcal{D}}^2)+ \|D_tA_ru\|^2
\eea
From the expression (\ref{before_lemm}), only the first two terms aren't bounded by hypothesis on their order:
\be
2\sup|\sigma(R_i)\|T_{-1}D_r G_r T_1 u\| \|G_r T_1 u\| + 2\gamma \|G_r T_1 u\|^2
\ee
In the first term we can commute $D_r$ using our lemma \ref{commutator_lemma2} at the cost of a residual term which can be bounded by background regularity. We note the that first term is a cross term again so we may use \ref{arbitrage} as before to express this in terms of squared norms. Then after using the lemma, the expression is bounded by $2C\sup|\sigma(R_i)|\|G_r D_t u\|^2$  plus lower order terms. We are left with a $\gamma \|G_r T_1 u\|^2$ term which group with the second term.

In the second term, we can commute as desired and we note that $T_1$ has a symbol that asymptotically matches $D_t$. Thus, $2\gamma \|G_r T_1 u\|^2$ can be expressed as $2\gamma \|G_r D_t u\|^2$ with a residual term that is bounded.

The term $2C\sup|\sigma(R_i)|\|G_r D_t u\|^2$ can be made arbitrarily small by $2\sup|\sigma(R_i)|$ which becomes vanishingly small for large $\alpha$ and by $\gamma$ which can clearly be as small as we want in \ref{arbitrage}. They can therefore be absorbed into the existing $\|G_r D_t u\|^2$ term with coefficient $1$ without changing its sign.

The other term involving $B_4$ and $D_r$ is handled similarly.

This leaves the term $\langle \frac{f}{r^2}C_7 u, u\rangle$ from our potential. This may seem lower order, but it is not sufficiently low to be absorbed into our domain norm under the wavefront assumption we had. (That would require $C_7 \in \Psi_b^{-2}(M)$.) To absorb this term into existing terms, we factor $C_7$ into the same terms as we did for $C_i$ with the symbol $4ia\partial_\xi (a)$:
\be
C_7 = -RT_{-1}^*B^*BT_{-1} - RG^*G+E_1+E_2+R'
\ee
with $\sigma(R) = r \partial_r f(r,\theta)$ All terms except $RT^*B^*BT$ and $RG^*G$ end up being bounded for the assumptions stated before. $G$ decomposes into three terms as before $\sum \tilde{G}_i^*\tilde{G}_i$. After commuting (without adding weights), by cauchy-schwarz we can bound the pairings by:
\bea
\langle\frac{1}{r} RBT_{-1}u, \frac{1}{r}BT_{-1}u\rangle &\leq& \|\frac{1}{r} RBT_{-1}u\|\|\frac{1}{r} BT_{-1}u\| \\
\langle\frac{1}{r} R\tilde{G}u, \frac{1}{r}\tilde{G}u\rangle &\leq& \|\frac{1}{r} R\tilde{G} u\|\|\frac{1}{r} \tilde{G} u\|
\eea
and we would like show these terms are arbitrarily small in order to absorb them into the main terms in the pairing. We can bound these terms in the pairing in terms of squared norms using \ref{arbitrage} as before. Then, by Hardy's inequality, each of these norm expressions are bounded by
\bea
\gamma^2\left(\|D_r BT_{-1}u\|^2+\|BT_{-1}u\|^2\right)\\
\gamma^2 \left(|D_r \tilde{G}_ru\|^2+\|BT_{-1}u\|^2 \right)
\eea
We may exchange the first terms in each expression above for terms involving $D_t$ and a lower order term finite in the domain using the lemma above, so they become
\bea
\gamma^2 \left(\|D_t BT_{-1}u\|^2 + \|BT_{-1}u\|^2\right) +\emph{l.o.t.}\\
\gamma^2 \left(\|D_t \tilde{G}u\|^2 + \|BT_{-1}u\|^2\right) + \emph{l.o.t.}
\eea
respectively. Making $\gamma$ small controls these terms.

After applying \ref{swap_lemma}, ours is a question of making the following terms arbitrarily small for potentially large $\frac{1}{\gamma^2}$ values:
\bea
\frac{1}{\gamma^2}\|D_t RBT_{-1}u\|^2\\
\frac{1}{\gamma^2}\|D_t R\tilde{G} u\|^2
\eea
We would like to absorb these into corresponding terms involving $B D_t$ and $G D_t$ in our main expression whose signs we must maintain in order to preserve signs. The rest of the terms are bounded by being weightless and lower in order.

These will indeed be small since the supremum of $\sigma(R)$ can be made arbitrarily small by our control in $r$. With out loss of generality, we consider the term involving $\tilde{G}$ after (weightless) commuting as usual. 
\begin{lem}
For $R\in\Psi_b^0$, we have $M_\delta\in\Psi_b^0(M)$ $R',R''\in\Psi_b^{-1}$ the inequality:
\be
\|R D_t \tilde{G}v\| \leq 2\sup_{supp(\sigma(B))} |\sigma(R)|\|D_t Gv\| + \|R''M_\delta D_t \tilde{G}v\| + \|RR'D_tv\|
\ee
\end{lem}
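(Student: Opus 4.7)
The plan is to invoke a standard sharp-Gårding/square-root construction at the symbol level, augmented by a microlocal comparison of $\tilde G$ with $G$ and a weightless commutation of $D_t$ through $\tilde G$. Set $c := 2\sup_{\mathrm{supp}(\sigma(B))}|\sigma(R)|$. Since $\tilde G$ is one of the factors arising from the decomposition $G=\sum_i \tilde G_i^*\tilde G_i$ of the good-sign term of the commutant, $\mathrm{WF}_b'(\tilde G)\subset\mathrm{supp}(\sigma(B))$, so that on the microlocal region that actually contributes, $c^2-|\sigma(R)|^2\ge 3c^2/4$. Thus $m:=\sqrt{c^2-|\sigma(R)|^2}$ is a genuine smooth $b$-symbol of order $0$; let $M_\delta\in\Psi_b^0(M)$ denote any quantization. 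Symbolic calculus in the b-calculus then gives
\[
M_\delta^*M_\delta+R^*R \;=\; c^2\,I + R'', \qquad R''\in\Psi_b^{-1}(M).
\]

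Next, I would expand and pair against $D_t\tilde Gv$:
\[
\|R\,D_t\tilde Gv\|^2 \;=\; c^2\|D_t\tilde Gv\|^2 \;-\; \|M_\delta D_t\tilde Gv\|^2 \;+\; \langle R''D_t\tilde Gv,\,D_t\tilde Gv\rangle.
\]
Drop the manifestly nonpositive middle piece. On $\mathrm{WF}_b'(\tilde G)$ the operator $M_\delta$ is elliptic, so I can apply Cauchy--Schwarz to the last term in the form $|\langle R''D_t\tilde Gv,D_t\tilde Gv\rangle|\le \|R''M_\delta D_t\tilde Gv\|\cdot\|M_\delta^{-1}D_t\tilde Gv\|$, absorbing the second factor into the main term via the inequality $\sqrt{a+b}\le\sqrt a+\sqrt b$ after extracting the square root. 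This yields a clean bound $\|R\,D_t\tilde Gv\|\le c\,\|D_t\tilde Gv\|+\|R''M_\delta D_t\tilde Gv\|+\text{l.o.t.}$

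To finish, I need to trade $\|D_t\tilde Gv\|$ for $\|D_tGv\|$. I would commute $D_t$ through $\tilde G$ using (the $\partial_t$ analogue of) lemma \ref{commutator_lemma2}, writing $[D_t,\tilde G]=R'D_t+\text{l.o.t.}$ with $R'\in\Psi_b^{-1}(M)$; this is precisely the origin of the term $\|RR'D_tv\|$ after a final Cauchy--Schwarz. The remaining piece $\tilde GD_tv$ is dominated by $D_tGv$ via the operator inequality $\tilde G^*\tilde G\le G$ (microlocalized to where $\tilde G$ is elliptic inside $G$), sandwiched between $D_t$ and $D_t^*$ with the commutators again absorbed into $R',R''$.

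The main obstacle is bookkeeping of weights: the entire point of the factorized framework of the preceding subsection is to guarantee that no commutator introduces an uncontrolled $r^{-1}$ factor, since the b-calculus filtration is too crude to see this. I would therefore cite lemmas \ref{commutator_lemma2} and \ref{commutator_lemma3} at each commutation to certify that the residues really do land in $\Psi_b^{-1}(M)$ without extra weight (this is exactly the content of the normal-operator refinement in \cite{melrose_wunsch_vasy}), and use the basic character of $\sigma(A)$ together with the cutoff $\chi_\Sigma$ to push the remaining non-characteristic pieces into $E_2$-type remainders controlled by elliptic regularity. Once all three terms on the right-hand side are of the advertised form, the lemma follows.
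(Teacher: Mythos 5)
There is a genuine gap in the central construction. You define $m:=\sqrt{c^2-|\sigma(R)|^2}$ and claim it is a smooth $b$-symbol of order $0$ because $c^2-|\sigma(R)|^2\ge 3c^2/4$ ``on the microlocal region that actually contributes.'' But $c=2\sup_{\mathrm{supp}(\sigma(B))}|\sigma(R)|$ only controls $|\sigma(R)|$ \emph{on} that support; here $\sigma(R)=r\partial_r f(r,\theta)$, and the whole point of the localization is that this quantity is small only where the commutant confines $r$ near $0$. Globally on ${}^bT^*M$ the quantity $c^2-|\sigma(R)|^2$ can be negative, so the square root is not a symbol and the identity $M_\delta^*M_\delta+R^*R=c^2I+R''$ has no quantization as written. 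To repair this you must first insert a microlocal cutoff identically $1$ on $\mathrm{supp}(\sigma(\tilde G))$ and supported where $r$ is small before comparing $|\sigma(R)|$ with $c$ --- and that cutoff is exactly what the paper's $M_\delta$ is: the paper writes $\tilde G=M_\delta\tilde G+R'$ with $R'$ of lower order, and then bounds the zeroth-order operator $R M_\delta$ by the sup of its symbol over $\mathrm{supp}(\sigma(\tilde G))$ plus a correction $R''\in\Psi_b^{-1}$, which is precisely where the three terms of the stated inequality come from. Once you add the needed cutoff, your sharp-G\aa rding route collapses back into (a more laborious version of) that argument.

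Two secondary points. First, your handling of the $R''$ error via Cauchy--Schwarz against $M_\delta^{-1}$ is not legitimate as stated: $M_\delta$ only has a parametrix where it is elliptic, and the resulting product of norms does not yield the advertised additive bound without a further small/large splitting of the kind in Proposition \ref{arbitrage}; as written the ``absorb via $\sqrt{a+b}\le\sqrt a+\sqrt b$'' step does not parse. Second, you attribute the term $\|RR'D_tv\|$ to commuting $D_t$ through $\tilde G$ via lemma \ref{commutator_lemma2}; that lemma is about $D_r$, where the $r$-weight is the issue --- commuting $D_t$ costs nothing of that kind, and in the paper the $R'$ term arises instead from the cutoff error $M_\delta\tilde G-\tilde G=-R'$. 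Likewise the replacement of $\|D_t\tilde Gv\|$ by $\|D_tGv\|$ via $\tilde G^*\tilde G\le G$ does not give what you want (it produces $\langle GD_tv,D_tv\rangle$, not $\|D_tGv\|^2$); the paper simply keeps $\tilde G$ in that factor.
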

\begin{proof}
To see this, create $M_\delta = Op(\chi_r\cdots)\in \Psi_b^0(M)$ using smooth cutoffs that equal 1 on the support of $\sigma(\tilde{G})$ so that $\sigma(M)\sigma(\tilde{G}) = \sigma(\tilde{G})$. (We could for instance double the width of the domain profile for each $\chi$.) and in particular we are concerned with cutting off in $r$ to bound the size of $r\partial_r(a)$. Then $M\tilde{G} = \tilde{G} - R'$ for some $R' = \Psi_b^{-3/2}(M)$. Thus we can express the above as:
\bea
\lefteqn{\|R D_t \tilde{G} v\|} \\
&\leq& \|R D_t M Gv\| + \|RD_tR'v\|\\
&\leq& \sup_{\emph{supp} (\sigma(\tilde{G}))}|\sigma(R)\| \|D_t\tilde{G}v\| + \|R''D_tM\tilde{G}v\| + \|RD_tR'v\|
\eea
for some $R''\in \Psi_b^{-1}(M)$.
\end{proof}

The coefficient $\sup_{\emph{supp} (\sigma(\tilde{G}))}\sigma(R) = \frac{1}{\lambda^2} (\sup_{r\in \emph{supp}(\sigma(M))} \delta \partial_r f(r,\theta))$ is clearly arbitrarily small by making $\delta$ small as $r$ is localized by the principal symbol of $G$. The dangling terms $\|RGu\|$ $\|RD_tR'u\|$ and $\|R''D_tMBT_{-1}u\|$ are of lower order and hence bounded.

Squaring $\|R D_t B T_{-1} v\|$ and $\|R D_t \tilde{G}v\|$ as presented in our lemma yields our main term and ``good sign term'' with arbitrarily small coefficients as well as cross terms with of the aforementioned with bounded terms. The cross terms can again be bounded by \ref{arbitrage}. Since the resulting main and ``good sign'' term can be made arbitrarily small, we can absorb them into the existing main and ``good sign'' terms in the main argument while maintaining their signs. (All absorption steps are clearly compatible with all other absorptions since we could pick our symbols arbitrarily small for each.)

Returning to our main pairing:
\bea
-i\langle\lbrack A^*A, \square + \frac{f(r,\theta)}{r^2}\rbrack u, u \rangle &=& \langle -W (\Box+\frac{f(r,\theta)}{r^2})u, u\rangle\\
&&-\langle D_tTBu, D_tTBu\rangle \\
&&- C\sum \langle \tilde{G}_i^* D_t u, \tilde{G}_i D_tu \rangle \\
&&+ \langle E_1 u, u \rangle + \langle E_2 u, u\rangle + \langle R u, u\rangle
\eea
where $C\sum \langle G_i^* D_t u, G_i D_tu \rangle$ is positive from our argument just now. The expression: $\langle D_tTBu, D_tTBu\rangle$ is thus bounded by the absolute values of the remaining terms which are finite. A final application of the lemma above allows us to bound $\|d_{\mathbb{R}^n} TBu\|$ by $\|D_t TBu\|$, and therefore the entire domain norm with $1/2$ higher order. However, the pairings may not always make sense for insufficiently regular $u$.

We finally address the required regularity of $u$ for these pairings to make sense. We resolve this by using an approximating argument using sequence of smoothing operators $\Lambda_r$ quantized from the symbol:
\be
|\tau|^s (1+\alpha|\tau|^2)^{-s/2}
\ee

Letting $A_r = A \Lambda_r$:
\bea
\lefteqn{-i\langle\lbrack A_r^*A_r, \square +\frac{f(r,\theta)}{r^2}\rbrack u, u \rangle=}\\
&& \langle -W (\square+\frac{f(r,\theta)}{r^2})\Lambda_ru, \Lambda_ru\rangle\\
&&-\langle D_tTB\Lambda_ru, D_tTB\Lambda_ru\rangle \\
&&- \sum \langle \tilde{G}^*_i D_t \Lambda_ru, \tilde{G}D_t\Lambda_ru \rangle \\
&&+ \langle E_1 \Lambda_ru, \Lambda_ru \rangle + \langle E_2 \Lambda_ru, \Lambda_ru\rangle + \langle R \Lambda_ru, \Lambda_ru\rangle
\eea

The left hand side is:
\bea
\lefteqn{i\langle [A_r^*A_r,\square+\frac{f(r,\theta)}{r^2}]u,u\rangle}\\
&=& \langle A_r^*A_r\left(\square +\frac{f(r,\theta)}{r^2}\right)u, u\rangle - \langle\left(\square +\frac{f(r,\theta)}{r^2}\right) A_r^*A_r u,u\rangle \\
&=& \langle A_r\left(\square +\frac{f(r,\theta)}{r^2}\right) u, A_r u\rangle - \langle A_r u, A_r \left(\square +\frac{f(r,\theta)}{r^2}\right) u\rangle
\eea
which is 0 as $u$ is a solution.

The ``good sign'' term has the same sign as our main term so it does not hurt our inequality and we can ignore it. The other terms are bounded because of WF hypothesis in the case of $E_1$, elliptic regularity in the case of $E_2$, and inductive hypothesis for $R$. Therefore, the $L^2$ norm $\| D_tTB\Lambda_ru \|$ is uniformly bounded as $r\rightarrow 0$. This means that weak compactness holds and that some subsequence of $D_tT B\Lambda_r u$ converge to weakly in $L^2$, or strongly to some distribution. However, the Riesz representation theorem guarantees that this is some $L^2$ function, and by uniqueness of distributions, we have that $D_tTB\Lambda_0u$ is an $L^2$ function. 

Hence $\|D_t T B u\|^2$ is bounded and we have no wavefront set of order $\frac{1}{2}+s$. We can inductively repeat this argument by shrinking the elliptic set of B arbitrarily much at each step and conclude $q_0 \notin \emph{WF}_{b,\tilde{\mathcal{D}}}^{\infty}(u)$.
$\Box$


\begin{thebibliography}{99}
\bibitem[B-P-S-TZ] {Burq_et}
\newblock Nicolas Burq, Fabrice Planchon, John G. Stalker, A. Shadi Tahvildar-Zadeh, \emph{Strichartz estimates for the Wave and Schr\"odinger Equations with the Inverse-Square Potential}, 2008, http://arxiv.org/abs/math/0207152.

\bibitem[Ca] {case} K. M. Case. 
\newblock \emph{Singular potentials}. Physical Rev. 2, 80:797-806, 1950.

\bibitem[C-T]{cheeger_taylor}
\newblock J. Cheeger, M. Taylor, \emph{Diffraction of waves by Conical Singularities parts I \& II}, Comm. Pure Appl. Math. 35(1982), 275-331, 487-529

\bibitem [E] {evans}
\newblock L. C. Evans \emph{Partial Differential Equations}, American Mathematical Society, Providence, Rhode Island, 1998, pp 65-89

\bibitem [Fr] {friedlander}
\newblock F.G. Friedlander, \emph{Sound Pulses}, Cambridge University Press, New York, 1958, MR 20 \#3703

\bibitem [G-L] {gerard_lebeau}
\newblock Patrick G\'erard and Gilles Lebeau, \emph{Diffusion d'une onde par un coin}, J. Amer. Math. Soc. 6 (1993), no. 2 341-424.

\bibitem [G-V] {gorgiev_vis}
\newblock Vladamir Gorgiev and Nicola Visciglia \emph{Decay estimates for the wave equation with potential}. preprint, 2002

\bibitem [H] {hardy}
\newblock G.H. Hardy \emph{Inequalities}, Second edition, Cambridge University Press, 1988

\bibitem [L] {lebeau}
\newblock Gilles Lebeau \emph{Propagation des ondes dans les vari\'et\'es \`a coins}, S\'eminaire sure les \'Equations au D\'eriv\'ees Partielles, 1995-1996, \'Ecole Polytech., Palaiseau, 1996, MR98m:58137, pp. Exp. No. XVI, 20.

\bibitem[Le]{Lebedev}
\newblock N. Lebedev, \emph{Special Functions and their Applications}, Prentice Hall, Eagle Cliffs, NJ 1965, pp 168-169

\bibitem[M] {melrose}
\newblock R. Melrose \emph{Transformation of boundary problems.} Acta Math., 147:149-236, 1981.

\bibitem [M-W] {melrose_wunsch}
\newblock R. Melrose, J. Wunsch \emph{Propagation of singularities for the wave equation on conic manifolds}, Inventiones mathematicae, 156, 2004, pp 235-299.

\bibitem[M-V-W] {melrose_wunsch_vasy}
\newblock R. Melrose, A. Vasy, J. Wunsch \emph{Propagation of Singularities for the Wave Equation on Edge Manifolds.} Duke Math. J.

\bibitem[R] {robinson}
\newblock Clark Robinson \emph{Dynamical Systems Stability, Symbolic Dynamics, and Chaos}, Second edition, CRC Press, 1999

\bibitem[R-W] {regge_wheeler}
\newblock Tullio Regge and John A. Wheeler. \emph{Stability of a Schwarzschild singularity.} Phys Rev 2, 108: pp 1063-1069, 1957.

\bibitem[R-S] {rodnianski_schlag}
\newblock Igor Rodnianski and Wilhelm Schlag, \emph{Time decay for solutions of Schr\"odinger equations withe rough and time dependent potentials.} preprint, 2001.

\bibitem[So] {sommerfeld}
\newblock A. Sommerfeld, \emph{Mathematische theorie der diffraction}, Math. Annalen 47 (1896), pp 317-374.

\bibitem[Ta] {taylor_text}
\newblock M. Taylor, \emph{Partial Differential Equations II}, 1996, pp 126-139

\bibitem [W] {wunsch}
\newblock J. Wunsch, \emph{Microlocal Analysis and evolution equations}, http://www.math.northwestern.edu/~jwunsch/micronotes.pdf

\bibitem[V] {vasy}
\newblock A. Vasy, \emph{Geometric optics and the wave equation on manifolds with corners}, http://www-math.mit.edu/~andras/psmcxedp1.ps

\bibitem[Z] {zerilli}
\newblock Frank J. Zerilli, \emph{Gravitational field of a partical falling in a Schwarzschild geometry analyzed in tensor harmonics}. Phys Rev D (3), 2: pp 2141-2160, 1970.
\end{thebibliography}
\end{document}